\definecolor{col0}{RGB}{255,0,0}
\definecolor{col1}{RGB}{0,200,0}
\definecolor{col2}{RGB}{0,0,255}
\definecolor{col3}{RGB}{255,255,0}
\definecolor{col4}{RGB}{200,0,200}
\tikzset{
	no text/.style={
		execute at begin node = \begin{lrbox}{\pgfutil@tempboxa},
			execute at end node   = \end{lrbox}
	},
	colour0/.style={draw=black},
	colour1/.style={draw=col2, dashed},
	colour2/.style={draw=col4, dashed, line width=2pt},
	colour2b/.style={draw=white, thin},
	colour3/.style={draw=col0,decorate,decoration={snake,amplitude=1pt}},
	colour4/.style={draw=col1, line width=2pt},
	colour4b/.style={draw=white,thin},
	every node/.style={draw,circle,inner sep=1pt, font=\scriptsize}
}
\def\bs{{\bigskip}}
\def\s{{\smallskip}}
\newtheorem{theorem}{Theorem}
\newtheorem{proposition}{Proposition}
\newtheorem{lemma}{Lemma}
\newtheorem{corollary}{Corollary}
\newtheorem*{conjecture}{Conjecture}
\newcounter{myobservation}
\newcommand*\samethanks[1][\value{footnote}]{\footnotemark[#1]}
\begin{document}
	
	\title
	{Hamiltonian cycles and 1-factors in 5-regular graphs}
	\author{{\sc Nico VAN CLEEMPUT\thanks{Department of Applied Mathematics, Computer Science and Statistics, Ghent University, Krijgslaan 281 - S9, 9000 Ghent, Belgium. E-mail address: nicolas.vancleemput@ugent.be}}\; and {\sc Carol T. ZAMFIRESCU\samethanks}\;\thanks{Department of Mathematics, Babe\c{s}-Bolyai University, Cluj-Napoca, Roumania. E-mail address: czamfirescu@gmail.com}}
	\date{}
	
	\maketitle
	\begin{center}
		\vspace{2mm}
		\begin{minipage}{125mm}
			{\bf Abstract.} It is proven that for any integer $g \ge 0$ and $k \in \{ 0, \ldots, 10 \}$, there exist infinitely many 5-regular graphs of genus~$g$ containing a 1-factorisation with exactly $k$ pairs of 1-factors that are \emph{perfect}, i.e.\ form a hamiltonian cycle. For $g = 0$ and $k = 10$, this settles a problem of Kotzig from 1964. Motivated by Kotzig and Labelle's ``marriage'' operation, we discuss two gluing techniques aimed at producing graphs of high cyclic edge-connectivity. We prove that there exist infinitely many planar 5-connected 5-regular graphs in which every 1-factorisation has zero perfect pairs. On the other hand, by the Four Colour Theorem and a result of Brinkmann and the first author, every planar 4-connected 5-regular graph satisfying a condition on its hamiltonian cycles has a linear number of 1-factorisations each containing at least one perfect pair. We also prove that every planar 5-connected 5-regular graph satisfying a stronger condition contains a 1-factorisation with at most nine perfect pairs, whence, every such graph admitting a 1-factorisation with ten perfect pairs has at least two edge-Kempe equivalence classes.
			
			\s
			
			{\bf Key words.} Planar, regular, 1-factor, edge-colouring, hamiltonian
			
			\s
			
			\textbf{MSC 2010.} 05C15, 05C45, 05C07, 05C10, 05C70
			
		\end{minipage}
	\end{center}
	
	\vspace{15mm}

	\section{Introduction}
	
	\noindent Edge-colourings are fundamental objects of study in graph theory---for a panoramic view, see~\cite{CCJST19}. Therein, it is pointed out that already K\H{o}nig, Shannon, and Vizing used a simple yet powerful structural tool to manipulate edge-colourings: so-called edge-Kempe switches, which permute the colours of the edges of a bichromatic cycle, and are named after Kempe who used the vertex version of these switches in his attempt to prove the Four Colour Theorem~\cite{Ke79}. Regular graphs have received special attention, as for many problems these are the most difficult cases. We give recent examples and refer the reader to further references therein. In~\cite{bH14}, belcastro and Haas count edge-Kempe equivalence classes in 3-edge-colourable \emph{cubic} (i.e.\ 3-regular) graphs. Asratian and Casselgren~\cite{AC16} treated edge-colourings allowing one or two more colours than the chromatic index, and dealt in particular with \emph{quartic} (i.e.\ 4-regular) graphs; they settled the smallest previously unsolved case of a problem of Vizing on edge-Kempe switches. McDonald, Mohar, and Scheide~\cite{MMS12} proved that all 4-edge-colourings of a (sub)cubic graph are edge-Kempe equivalent. A good survey on reconfigurations of vertex- and edge-colourings is \cite{MN}. For edge-colourings and edge-Kempe switching in regular triangulations of the torus and applications in statistical mechanics, we refer to~\cite{MS10}, and for a paper using edge-Kempe equivalence classes to show that the standard Monte Carlo algorithms are nonergodic, see~\cite{Ce17}. Not only was Kempe working on planar graphs, various recent contributions treat the planar case. We refer to~\cite{bH14} and Mohar's \cite[Section~3]{Mo06}. Combining regularity and planarity, a conjecture of Seymour states that if $G$ is a $k$-regular planar graph, then $G$ is $k$-edge-colourable if and only if $G$ is oddly $k$-edge-connected (defined below). This conjecture is still open in general, but several cases have been resolved~\cite{CES15}. We now introduce the notions needed in later sections.
	
	A graph is \emph{$k$-regular} if all of its vertices are of degree~$k$. In a graph $G$, a \emph{$k$-factor} is a spanning $k$-regular subgraph of $G$. (A 1-factor is often called a \emph{perfect matching}.) A \emph{$1$-factorisation} of $G$ is a partition of its edge set into 1-factors. A pair of 1-factors whose union forms a hamiltonian cycle is said to be \emph{perfect}, and a 1-factorisation is \emph{perfect} if all of its pairs of 1-factors are perfect. We will often speak simply of pairs, suppressing the fact that these are pairs of 1-factors, and we shall sometimes give 1-factors a certain colour---all colourings in this paper are \emph{proper} edge-colourings, i.e.\ incident edges must receive different colours. Following Knuth~\cite{Kn19}, when a graph $G$ admits a perfect 1-factorisation, we say that $G$ is \emph{perfectly hamiltonian}, while Kotzig named such graphs ``fortement hamiltonien''. (When we speak of a $d$-edge-coloured graph $G$ that is perfectly hamiltonian, we tacitly assume that $G$ is coloured in such a way that every two colour classes induce a hamiltonian cycle.) Chapter 5 of C.-Q. Zhang's treatise on cycle double covers~\cite{Zh12} is devoted to ``Kotzig graphs'', which are cubic perfectly hamiltonian graphs. Clearly, every planar 5-regular perfectly hamiltonian graph admits a hamiltonian cycle double and quadruple cover, while a triple cover is impossible. In general, planar 5-regular graphs need not have a hamiltonian cycle, even if 3-connectedness is imposed.
	
	Every uniquely cubic 3-edge-colourable graph has exactly three hamiltonian cycles as any pair of colours forms a hamiltonian cycle, and any hamiltonian cycle different from one of these three cycles would induce a new edge-colouring (Thomason notes that the inverse does not hold~\cite{Th82}). Every cubic graph containing exactly three hamiltonian cycles is perfectly hamiltonian, for instance the tetrahedron as well as any graph obtained from it by successively replacing vertices by triangles. But planar cubic perfectly hamiltonian graphs with more than three hamiltonian cycles are also available, for instance the dodecahedron. Partially answering a question of Kotzig and Labelle~\cite{KL78}, Mazzuoccolo showed that a cubic graph of even size is perfectly hamiltonian if and only if its line graph is a perfectly hamiltonian graph~\cite{Ma08}, which provides us with a rich family of planar quartic perfectly hamiltonian graphs. Regular perfectly hamiltonian graphs form a hierarchy: if one has a $k$-regular such graph, one immediately also has a perfectly hamiltonian graph that is $\ell$-regular for all $\ell \le k$. Therefore, restricted to the plane, by Euler's formula the most difficult case is the \emph{quintic} (i.e.\ 5-regular) case and this will be our main focus in this paper. Our starting point is the following question of Kotzig~\cite[p.~162, probl\`{e}me~19]{Ko64}; see also \cite[p.~104, probl\`{e}me~10]{KL79}.
	
	\bs
	
	\noindent \textbf{Problem (Kotzig), 1964.} \emph{Do planar $5$-regular perfectly hamiltonian graphs exist?}
	
	\bs
	
	\noindent In Section~2 we solve this question in the affirmative, characterising the orders for which such graphs exist and proving that there is an at least exponentially increasing number of such graphs. A brief remark on the chromatic index of planar quintic graphs: Seymour conjectured in around 1973 that if $G$ is a $k$-regular planar graph, then $G$ is $k$-edge-colourable if and only if $G$ is oddly $k$-edge-connected, where a graph $G$ is \emph{oddly $k$-edge-connected} if for all odd cardinality $X \subset V(G)$ the cardinality of the set of all edges of $G$ with an end in $X$ and an end in $V(G)\setminus X$ is at least $k$; see~\cite{CES15} for more details. For a solution of the case $k = 5$ various authors refer to the unpublished manuscript~\cite{Gu03} of Guenin. As this result dates to at least 2003\footnote{See the technical reports of the Combinatorics and Optimization department at the University of Waterloo:  https://uwaterloo.ca/combinatorics-and-optimization/research-combinatorics-and-optimization/technical-reports/technical-reports-2003} and has still not appeared in print, we do not consider the problem settled for the quintic case. Guenin's work, if valid, implies that planar 5-connected quintic graphs have chromatic index 5. We point out that planar 3-connected quintic graphs with chromatic index 6 exist. Whether planar quintic graphs of connectivity 4 can have chromatic index 6 is unknown to the authors.
	
	If we drop the planarity requirement in Kotzig's problem, then $K_6$, the complete graph on six vertices, yields a solution as every 1-factorisation of $K_6$ has ten perfect pairs (and, with an operation we shall discuss, one can obtain from $K_6$ an infinite family). In Section~3 we present enumeration results on perfect pairs in planar quintic graphs and discuss edge-Kempe equivalence classes. 
	All of our results on edge-colourings of quintic graphs can be translated to vertex-colourings of certain 8-regular graphs (by considering their line-graphs), as for instance Mohar does in a different setting in~\cite{Mo06}, where he looks into vertex- and edge-colourings with one or two more colours than the chromatic index.
	
	Our notation generally follows Diestel's book~\cite{Di16}, only that here a complete graph on $n$ vertices shall be denoted with $K_n$, and a path between distinct vertices $v$ and $w$ will be called a \emph{$vw$-path}. Many results in this paper rely on computer programs to find the initial building blocks or eliminate small cases. All necessary programs, instructions on how to use them as well as the resulting graphs can be obtained at \url{http://www.github.com/nvcleemp/hc_1f_5reg}.

	\section{A solution to a problem of Kotzig}\label{sec:kotzig}

	A straightforward, but efficiently implemented branch-and-bound algorithm was used to systematically construct all proper edge-colourings up to equivalence by trying all colours and bounding when conflicting colours are detected. When applied to the planar quintic graphs generated in~\cite{HMR11}, the program yielded the following lemma---the eleven 1-factorisations required for its proof are given in Figure~\ref{fig:20_10} for the ten perfect pairs case and in Figure~\ref{fig:20} in the Appendix for the other counts of perfect pairs. Already this lemma gives an affirmative answer to Kotzig's aforementioned problem, but we shall give a much stronger result below.
	
	\begin{lemma}\label{lem:perfect_pairs_for_all_k}
		There exists a planar quintic graph $H$ such that $H$ has a $1$-factorisation containing exactly $k$ perfect pairs for every $k \in \{ 0, \ldots, 10 \}$.
	\end{lemma}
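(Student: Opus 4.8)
The plan is to exhibit a single explicit planar quintic graph $H$ together with eleven $1$-factorisations of it, the $k$-th having exactly $k$ perfect pairs, $k = 0, 1, \ldots, 10$. Since $H$ is $5$-regular, every $1$-factorisation of $H$ consists of five $1$-factors and hence has $\binom{5}{2} = 10$ pairs of $1$-factors; thus $\{0, \ldots, 10\}$ is precisely the range of a priori possible perfect-pair counts, and the lemma asserts that all of these are realised by one and the same graph.

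To locate $H$, one enumerates small planar quintic graphs --- conveniently, the lists produced in \cite{HMR11} --- and for each runs a branch-and-bound routine constructing all proper $5$-edge-colourings up to isomorphism, recording for each colouring how many of its ten colour-pairs induce a hamiltonian cycle rather than a union of two or more disjoint cycles spanning $V(H)$. One then takes for $H$ a graph on which the set of attained perfect-pair counts is all of $\{0, \ldots, 10\}$; as the figure labels indicate, a suitable example has $20$ vertices, and $H$ is displayed explicitly.

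For the proof itself it then suffices to record, for each $k$, one concrete $1$-factorisation of $H$ with exactly $k$ perfect pairs: these appear in Figure~\ref{fig:20_10} for $k = 10$ and in Figure~\ref{fig:20} for $k \in \{0, \ldots, 9\}$. For each, one checks two finite and routine facts: that the five colour classes form a proper edge-colouring (equivalently, pairwise disjoint perfect matchings whose union is $E(H)$), and that exactly $k$ of the $\binom{5}{2}$ unions of two colour classes form a single spanning cycle; both can be verified by hand or, more reliably, with the accompanying code. The only genuine difficulty is the search: it is not clear a priori that one planar quintic graph admits $1$-factorisations attaining every perfect-pair count from $0$ to $10$, and in particular the case $k = 10$ is exactly the (previously open) Kotzig problem of producing a perfectly hamiltonian planar quintic graph, while the very same graph must at the other extreme also carry a $1$-factorisation with no perfect pair. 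Once such a graph is in hand the remainder is bookkeeping, so the heart of the argument is the computer-assisted discovery of $H$.
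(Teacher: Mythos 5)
Your proposal matches the paper's proof exactly: the authors use a branch-and-bound search over all proper $5$-edge-colourings of the planar quintic graphs generated in~\cite{HMR11}, find a suitable $20$-vertex graph, and simply exhibit the eleven $1$-factorisations (Figure~\ref{fig:20_10} for $k=10$, Figure~\ref{fig:20} for the rest), with verification being finite and routine. No substantive difference.
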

	
	\begin{figure}
		\begin{center}
			\begin{tikzpicture}[very thick]
				
				\begin{scope}[no text]
					
					\node (0) at (3,0) {1};
					\node (1) at (0,3) {2};
					\node (2) at (0,-3) {3};
					\node (8) at (-3,0) {9};
					
					\node (4) at (1.5,0) {5};
					\node (6) at (0,1.5) {7};
					\node (10) at (0,-1.5) {11};
					\node (11) at (.5,-.75) {12};
					\node (12) at (.5,0) {13};
					\node (13) at (.5,.75) {14};
					\node (14) at (-.5,.9) {15};
					\node (15) at (-1.5,.4) {16};
					\node (16) at (-1.5,-.4) {17};
					\node (17) at (-.5,-.9) {18};
					\node (18) at (-.5,-.3) {19};
					\node (19) at (-.5,.3) {20};
					
					\node (5) at ($.5*(0)+.5*(1)$) {6};
					\node (3) at ($.5*(0)+.5*(2)$) {4};
					\node (7) at ($.5*(1)+.5*(8)$) {8};
					\node (9) at ($.5*(2)+.5*(8)$) {10};
				\end{scope}
				\draw[colour0] (0) edge[bend right, colour0] (1);
				\draw[colour0] (2) -- (3);
				\draw[colour0] (4) -- (11);
				\draw[colour0] (5) -- (13);
				\draw[colour0] (6) -- (7);
				\draw[colour0] (8) -- (9);
				\draw[colour0] (10) -- (17);
				\draw[colour0] (12) -- (18);
				\draw[colour0] (14) -- (19);
				\draw[colour0] (15) -- (16);
				
				\draw[colour1] (0) edge[bend left, colour1] (2);
				\draw[colour1] (1) -- (5);
				\draw[colour1] (3) -- (4);
				\draw[colour1] (6) -- (14);
				\draw[colour1] (7) -- (8);
				\draw[colour1] (9) -- (17);
				\draw[colour1] (10) -- (11);
				\draw[colour1] (12) -- (13);
				\draw[colour1] (15) -- (19);
				\draw[colour1] (16) -- (18);
				
				\draw[colour2] (0) -- (3);
				\draw[colour2b] (0) -- (3);
				\draw[colour2] (1) edge[bend right, colour2] (8);
				\draw[colour2b] (1) edge[bend right, colour2b] (8);
				\draw[colour2] (2) -- (10);
				\draw[colour2b] (2) -- (10);
				\draw[colour2] (4) -- (5);
				\draw[colour2b] (4) -- (5);
				\draw[colour2] (6) -- (13);
				\draw[colour2b] (6) -- (13);
				\draw[colour2] (7) -- (14);
				\draw[colour2b] (7) -- (14);
				\draw[colour2] (9) -- (16);
				\draw[colour2b] (9) -- (16);
				\draw[colour2] (11) -- (17);
				\draw[colour2b] (11) -- (17);
				\draw[colour2] (12) -- (19);
				\draw[colour2b] (12) -- (19);
				\draw[colour2] (15) -- (18);
				\draw[colour2b] (15) -- (18);
				
				\draw[colour3] (0) -- (4);
				\draw[colour3] (1) -- (7);
				\draw[colour3] (2) -- (9);
				\draw[colour3] (3) -- (10);
				\draw[colour3] (5) -- (6);
				\draw[colour3] (8) -- (16);
				\draw[colour3] (11) -- (12);
				\draw[colour3] (13) -- (19);
				\draw[colour3] (14) -- (15);
				\draw[colour3] (17) -- (18);
				
				\draw[colour4] (0) -- (5);
				\draw[colour4b] (0) -- (5);
				\draw[colour4] (1) -- (6);
				\draw[colour4b] (1) -- (6);
				\draw[colour4] (2) edge[bend left, colour4] (8);
				\draw[colour4b] (2) edge[bend left, colour4b] (8);
				\draw[colour4] (3) -- (11);
				\draw[colour4b] (3) -- (11);
				\draw[colour4] (4) -- (12);
				\draw[colour4b] (4) -- (12);
				\draw[colour4] (7) -- (15);
				\draw[colour4b] (7) -- (15);
				\draw[colour4] (9) -- (10);
				\draw[colour4b] (9) -- (10);
				\draw[colour4] (13) -- (14);
				\draw[colour4b] (13) -- (14);
				\draw[colour4] (16) -- (17);
				\draw[colour4b] (16) -- (17);
				\draw[colour4] (18) -- (19);
				\draw[colour4b] (18) -- (19);
				
			\end{tikzpicture}
		\end{center}
		\caption{A planar quintic graph on 20 vertices with a $1$-factorisation containing exactly \(10\) perfect pairs. This graph answers the problem posed by Kotzig in 1964 in the affirmative.}\label{fig:20_10}
	\end{figure}
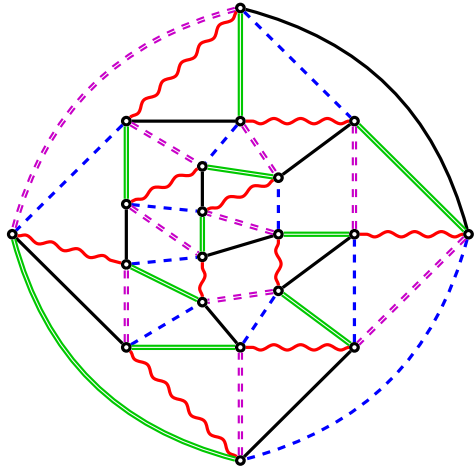

	\subsection{Marriage}
	
	Kotzig and Labelle~\cite{KL79} called the following procedure, for cubic graphs, ``mariage'' (a French word). They pointed out that the marriage of two perfectly hamiltonian cubic graphs produces a cubic perfectly hamiltonian graph. We now generalise this observation.
	
	\begin{lemma}\label{lem:marriage}
		Let $G$ and $H$ be disjoint $d$-regular $d$-edge-coloured graphs such that $G$ is perfectly hamiltonian and $H$ contains exactly $k$ perfect pairs. Consider $x \in V(G)$ and $y \in V(H)$, and let $N(x) = \{ x_1, \ldots, x_d \}$ and $N(y) = \{ y_1, \ldots, y_d \}$. Let the colour of $xx_i$ and $yy_i$ be $i$. Then $$G_x \, \omega \, H_y := (V(G) \setminus \{ x \} \cup V(H) \setminus \{ y \}, E(G - x) \cup E(H - y) \cup \{ x_iy_i \}_{i=1}^d)$$ is a $d$-regular $d$-edge-coloured graph containing exactly $k$ perfect pairs.
	\end{lemma}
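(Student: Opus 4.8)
The plan is to verify the three claimed properties in turn — regularity, properness of the colouring, and the count of perfect pairs — with essentially all of the work lying in the last one. Regularity (and simplicity) is immediate: each $x_i$, respectively $y_i$, loses exactly the edge $xx_i$, respectively $yy_i$, and gains exactly the edge $x_iy_i$, while every other vertex keeps all of its incident edges; since $x_1,\dots,x_d$ are distinct and $y_1,\dots,y_d$ are distinct, the added edges $x_iy_i$ form a matching between $V(G)\setminus\{x\}$ and $V(H)\setminus\{y\}$, so no loops or multiple edges are created and $G_x\,\omega\,H_y$ is a simple $d$-regular graph. For properness, note that deleting $x$ frees colour $i$ at $x_i$ and deleting $y$ frees colour $i$ at $y_i$, so colouring each new edge $x_iy_i$ with colour $i$ extends the proper colourings of $G-x$ and $H-y$ to a proper $d$-edge-colouring of $G_x\,\omega\,H_y$.

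For the count, I would fix an arbitrary pair of colours $\{i,j\}$ and describe the subgraph $F$ of $G_x\,\omega\,H_y$ formed by the edges of colours $i$ and $j$. Since $G$ and $H$ are $d$-regular and $d$-edge-coloured, the colour-$i$ and colour-$j$ edges of $G$ form a $2$-factor $G_{ij}$, and likewise $H_{ij}$ in $H$, both of which are disjoint unions of cycles alternating in the two colours. Because $G$ is perfectly hamiltonian, $G_{ij}$ is a single Hamilton cycle of $G$ passing through $x$, so deleting $x$ turns it into a Hamilton path $P_G$ of $G-x$ with ends $x_i$ and $x_j$. In $H$, the cycle of $H_{ij}$ through $y$ becomes, after deleting $y$, a path $P_H$ in $H-y$ with ends $y_i$ and $y_j$, while the remaining cycles $C_1,\dots,C_m$ of $H_{ij}$ survive untouched in $H-y$. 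One then checks the colours at the four vertices $x_i,x_j,y_i,y_j$: at $x_i$ the removed edge $xx_i$ had colour $i$, so the edge of $P_G$ incident to $x_i$ has colour $j$ while the new edge $x_iy_i$ has colour $i$, and symmetrically at $x_j$, $y_i$, $y_j$. Hence in $F$ the two new edges fit onto the ends of $P_G$ and $P_H$, so that $F$ consists precisely of the single cycle obtained by concatenating $P_G$, the edge $x_jy_j$, the path $P_H$, and the edge $y_ix_i$, together with the cycles $C_1,\dots,C_m$.

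The decisive point is then that this cycle, together with $C_1,\dots,C_m$, covers $(V(G)\setminus\{x\})\cup(V(H)\setminus\{y\})=V(G_x\,\omega\,H_y)$, so $F$ is a $2$-factor, and $F$ is a Hamilton cycle of $G_x\,\omega\,H_y$ if and only if $m=0$, i.e.\ if and only if $H_{ij}$ was a single Hamilton cycle of $H$, i.e.\ if and only if $\{i,j\}$ is a perfect pair of $H$. Running this equivalence over all $\binom{d}{2}$ pairs of colours shows that $G_x\,\omega\,H_y$ has exactly as many perfect pairs as $H$, namely $k$, which completes the proof. The only place demanding real care is the bookkeeping in the middle step: one must be certain that stitching $P_G$ and $P_H$ together with the two new edges $x_iy_i$ and $x_jy_j$ yields a single cycle — rather than two cycles, or a path — and that it leaves the cycles $C_\ell$ of $H_{ij}$ entirely unaffected. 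This is exactly what the colour computation at $x_i,x_j,y_i,y_j$ (using that $G_{ij}$ and $H_{ij}$ alternate colours, so $P_G$ and $P_H$ have the two colours at their respective ends) guarantees.
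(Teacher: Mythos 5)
Your proof is correct and takes essentially the same approach as the paper: the paper's (very terse) proof simply asserts that a colour pair $(i,j)$ is perfect in $G_x\,\omega\,H_y$ iff $G$ has a bichromatic hamiltonian $x_ix_j$-path and $H$ has a bichromatic hamiltonian $y_iy_j$-path, i.e.\ iff $(i,j)$ is perfect in both $G$ and $H$, and then uses that $G$ is perfectly hamiltonian. You supply the detailed bookkeeping (the structure of the bichromatic $2$-factor and the stitching of the two paths via $x_iy_i$ and $x_jy_j$) that the paper leaves implicit, but the underlying argument is identical.
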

	
	\begin{proof}
		Consider a pair of colours $(i, j)$ in an edge-colouring of $G_x \,\omega\, H_y$ obtained as described above. It is perfect iff $G$ contains a hamiltonian $i$-$j$-coloured $x_ix_j$-path and $H$ contains a hamiltonian $i$-$j$-coloured $y_iy_j$-path. Thus, $(i, j)$ is perfect in \(G_x \,\omega\, H_y\) iff $(i, j)$ is perfect in both $G$ and $H$. As $G$ is perfectly hamiltonian, it is whether $(i, j)$ is perfect (or not) in $H$ which determines whether $(i, j)$ is perfect in $G_x \,\omega\, H_y$.
	\end{proof}
	
	Whenever the choice of $x \in V(G)$ and $y \in V(H)$ is irrelevant for a particular argument, we simply write $G \,\omega\, H$, as Kotzig and Labelle did. If $G$ and $H$ are planar, then the operation is performed such that $G \,\omega\, H$ is planar. We can always permute the colours in either \(G\) or \(H\) so that the cyclic orders of the colours around \(x\) and \(y\) match.
	
	We note that a seemingly natural generalisation of this lemma stating that the marriage of \(d\)-regular \(d\)-edge-coloured graphs having \(k \) and \(\ell \) perfect pairs, respectively, yields a graph with \(\min\{k,\ell\}\) perfect pairs, does not hold for arbitrary $k$ and $\ell$. For instance, consider the case where \(G\) has the perfect pairs \((1,2),\ldots,(1,d)\), and \(H\) has the perfect pairs \((1,2), (3,4), \ldots, (2\left\lfloor\frac{d}{2}\right\rfloor-1,2\left\lfloor\frac{d}{2}\right\rfloor)\).
	
	\begin{theorem}\label{thm:infinite_any_genus}%
		For any integer $g \ge 0$, there exist infinitely many $5$-connected quintic graphs of genus~$g$, each of which contains a $1$-factorisation with exactly $k$ perfect pairs for all $k \in \{ 0, \ldots, 10 \}$.
	\end{theorem}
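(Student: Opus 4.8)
The plan is to build every required graph from one fixed base, iterating the marriage operation of Lemma~\ref{lem:marriage} and using copies of $K_6$ to inject genus. Fix the planar quintic graph $H$ of Lemma~\ref{lem:perfect_pairs_for_all_k} (see Figure~\ref{fig:20_10}); it is $5$-connected and, via its $1$-factorisation with ten perfect pairs, perfectly hamiltonian. I would first record that marriage preserves $5$-connectivity among quintic graphs: if $A,B$ are $5$-connected and quintic and $S\subseteq V(A\,\omega\,B)$ has $|S|=4$, write $S=S_A\cup S_B$ with $S_A\subseteq V(A-x)$ and $S_B\subseteq V(B-y)$; for $|S_A|\le 3$ the graph $(A-S_A)-x$ is connected, while for $|S_A|=4$ (so $S_B=\emptyset$) each of its components contains a neighbour of $x$, and symmetrically for $B$; since at most four of the five matching edges $x_iy_i$ are destroyed, every piece on the $A$-side is joined through a surviving such edge to the $B$-side, so $(A\,\omega\,B)-S$ is connected.

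A second subtlety is that the theorem asks for one graph carrying eleven prescribed $1$-factorisations, whereas the marriage of Lemma~\ref{lem:marriage} is phrased through the colourings near the glued vertices. The remedy is to choose, once and for all, any perfect matching $\mu$ between the two neighbourhoods compatible with a chosen pair of rotations at $x$ and $y$ (possible since colours may be permuted freely); then $A\,\omega_\mu\,B$ is independent of the colourings, while for each of the eleven $1$-factorisations of the factor in the role of ``$H$'' one may relabel its colour classes to align with $\mu$, which changes neither the bichromatic subgraphs nor the number of perfect pairs. Now let $\Gamma_n$ be obtained by marrying $n+1$ disjoint copies of $H$ along a path with rotation-compatible matchings: $\Gamma_n$ is planar, quintic, $5$-connected (by the preservation property), of order strictly increasing in $n$, and perfectly hamiltonian; and giving exactly one copy the $1$-factorisation with $k$ perfect pairs and the rest a perfect one, then applying Lemma~\ref{lem:marriage} iteratively along the path, shows that $\Gamma_n$ admits for every $k\in\{0,\ldots,10\}$ a $1$-factorisation with exactly $k$ perfect pairs. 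Letting $n$ vary settles $g=0$.

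For $g\ge 1$, choose $\Gamma_n$ with more than $g$ vertices and marry to it successively $g$ copies $K_6^{(1)},\ldots,K_6^{(g)}$ of $K_6$; at the $i$-th step delete a vertex $y_i$ from $K_6^{(i)}$ and a vertex from the $\Gamma_n$-part of the current graph — possible since that part shrinks by one vertex per step and is never touched afterwards, so the $g$ graphs $K_6^{(i)}-y_i\cong K_5$ stay intact and pairwise disjoint. Call the result $G_{n,g}$. As $K_6$ is $5$-connected, quintic, perfectly hamiltonian (every $1$-factorisation of $K_6$ has ten perfect pairs), and of genus $1$, the preservation property gives that $G_{n,g}$ is $5$-connected and quintic, while Lemma~\ref{lem:marriage} with the relabelling observation (with each $K_6$ in the perfectly-hamiltonian role) transfers to $G_{n,g}$ the eleven $1$-factorisations of $\Gamma_n$. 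Finally, writing $\gamma$ for the orientable genus: a marriage $A\,\omega_\mu\,B$ with $\mu$ rotation-compatible embeds in a connected sum of a genus-$\gamma(A)$ and a genus-$\gamma(B)$ surface (the matching edges routed in a collar of the gluing circle), so $\gamma(A\,\omega_\mu\,B)\le\gamma(A)+\gamma(B)$ and hence $\gamma(G_{n,g})\le\gamma(\Gamma_n)+g\,\gamma(K_6)=g$; conversely, $G_{n,g}$ contains $g$ pairwise vertex-disjoint copies of $K_5$, and since the genus of a disjoint union is the sum of the genera of its parts and genus is subgraph-monotone, $\gamma(G_{n,g})\ge g\,\gamma(K_5)=g$. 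Thus $\gamma(G_{n,g})=g$, and varying $n$ yields infinitely many pairwise non-isomorphic examples.

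The delicate part is this genus bookkeeping: a naive chain $K_6\,\omega\,K_6\,\omega\cdots$ leaves only two intact copies of $K_5$ and so pins the genus down only to the interval $[2,g]$; one really must keep the $g$ inserted copies of $K_6$ pairwise far apart, which is precisely why the planar base has to be taken large enough to absorb every vertex deletion. By contrast, $5$-connectivity is handled uniformly, once and for all, by the preservation property above.
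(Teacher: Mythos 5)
Your proposal is correct and follows essentially the same route as the paper: marry copies of the Lemma~\ref{lem:perfect_pairs_for_all_k} graph (with its eleven colourings, aligned at the glued vertices) to get infinitely many planar examples, then marry $g$ copies of $K_6$ to force genus exactly $g$, with the lower bound coming from $g$ disjoint copies of $K_5$ and the upper bound from a local re-embedding of each $K_5$ into a handle. The only difference is one of detail: you spell out the preservation of $5$-connectivity, the colour-alignment at the glued vertices, and the connected-sum genus bound, all of which the paper treats as routine or delegates to Figure~\ref{fig:k5}.
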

	
	\begin{proof}
		Let $k \in \{ 0, \ldots, 10 \}$. We call $\Lambda^k$ the edge-coloured graph $H$ from Lemma~\ref{lem:perfect_pairs_for_all_k} which has been edge-coloured such that it contains exactly $k$ perfect pairs. Consider the edge-coloured graph
		$$\Lambda := \Lambda^k \,\omega\, \underbrace{(\ldots(((\Lambda^{10} \,\omega\, \Lambda^{10}) \,\omega\, \Lambda^{10}) \,\omega\, \ldots ) \,\omega\, \Lambda^{10})}_{\text{Marriage of at least $g$ copies of $\Lambda^{10}$}}.$$
		$\Lambda$ is a planar quintic edge-coloured graph which by Lemma~\ref{lem:marriage} contains exactly $k$ perfect pairs. We can perform the marriage above for exactly the same vertices for each \(k\) in order to always obtain the same underlying planar quintic uncoloured graph that then contains a \(1\)-factorisation with exactly \(k\) perfect pairs for all \(k \in \{ 0, \ldots, 10 \}\).
		
		In $K_6$, every 1-factorisation has ten perfect pairs. The genus of the vertex-deleted subgraph of $K_6$, which is $K_5$, is 1. Let $K^1, \ldots , K^g$ be pairwise disjoint copies of $K_6$ and $y_i \in V(K^i)$ arbitrary. Consider $g$ pairwise non-adjacent vertices $x_i$ in $\Lambda$. Then
		$$\Gamma := ( \ldots (( \Lambda_{x_1} \,\omega\, K^1_{y_1})_{x_2} \,\omega\, K^2_{y_2})_{x_3} \,\omega\, K^3_{y_3} \ldots )_{x_g} \,\omega\, K^g_{y_g}$$
		is a quintic edge-coloured graph which by Lemma~\ref{lem:marriage} contains exactly $k$ perfect pairs. Its genus is at least $g$ because it contains $g$ pairwise disjoint copies of $K_5$. We use the (local) embedding of $K_5$ given in Figure~\ref{fig:k5} to show that $\Gamma$ indeed has genus exactly $g$. It is routine to verify that $\Gamma$ is 5-connected, for instance with Menger's Theorem.
		
		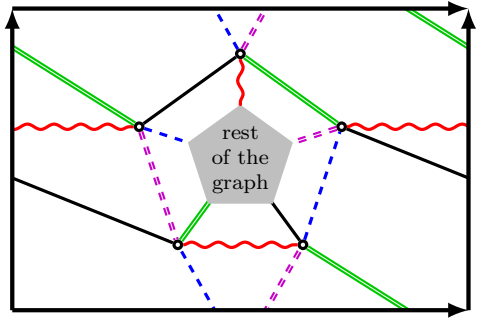
\begin{figure}[!ht]
			\begin{center}
				\begin{tikzpicture}[very thick,>=latex]
					\newcommand{\pentradius}{1.4}
					\newcommand{\ipentradius}{.7}
					\begin{scope}
						\clip (-3,-2) rectangle (3,2);
						
						\coordinate (pent1) at (90:\pentradius);
						\coordinate (pent2) at (162:\pentradius);
						\coordinate (pent3) at (234:\pentradius);
						\coordinate (pent4) at (306:\pentradius);
						\coordinate (pent5) at (18:\pentradius);
						
						\foreach \x in {1,...,5} \node (vpent\x) at (pent\x) {};
						
						\coordinate (ipent1) at (90:\ipentradius);
						\coordinate (ipent2) at (162:\ipentradius);
						\coordinate (ipent3) at (234:\ipentradius);
						\coordinate (ipent4) at (306:\ipentradius);
						\coordinate (ipent5) at (18:\ipentradius);

						\path (vpent5) edge[colour0] ($(pent3) + (6,0)$);
						\path ($(pent5) + (-6,0)$) edge[colour0] (vpent3);
						
						\path (vpent1) edge[colour1] ($(pent3) + (0,4)$);
						\path ($(pent1) + (0,-4)$) edge[colour1] (vpent3);
						
						\path (vpent1) edge[colour2] ($(pent4) + (0,4)$);
						\path ($(pent1) + (0,-4)$) edge[colour2] (vpent4);
						\path (vpent1) edge[colour2b] ($(pent4) + (0,4)$);
						\path ($(pent1) + (0,-4)$) edge[colour2b] (vpent4);
						
						\path (vpent5) edge[colour3] ($(pent2) + (6,0)$);
						\path ($(pent5) + (-6,0)$) edge[colour3] (vpent2);
						
						\path (vpent2) edge[colour4] ($(pent4) + (-6,4)$);
						\path ($(pent2) + (6,0)$) edge[colour4] ($(pent4) + (0,4)$);
						\path ($(pent2) + (6,-4)$) edge[colour4] (vpent4);
						\path (vpent2) edge[colour4b] ($(pent4) + (-6,4)$);
						\path ($(pent2) + (6,0)$) edge[colour4b] ($(pent4) + (0,4)$);
						\path ($(pent2) + (6,-4)$) edge[colour4b] (vpent4);
						
						\draw[colour0] (vpent1) -- (vpent2);
						\draw[colour1] (vpent4) -- (vpent5);
						\draw[colour2] (vpent2) -- (vpent3);
						\draw[colour2b] (vpent2) -- (vpent3);
						\draw (vpent3) edge[colour3] (vpent4);
						\draw[colour4] (vpent1) -- (vpent5);
						\draw[colour4b] (vpent1) -- (vpent5);
						
						\draw[colour0] (vpent4) -- (ipent4);
						\draw[colour1] (vpent2) -- (ipent2);
						\draw[colour2] (vpent5) -- (ipent5);
						\draw[colour2b] (vpent5) -- (ipent5);
						\draw (vpent1) edge[colour3] (ipent1);
						\draw[colour4] (vpent3) -- (ipent3);
						\draw[colour4b] (vpent3) -- (ipent3);
						
						\filldraw[draw=gray!50,fill=gray!50] (ipent1) -- (ipent2) -- (ipent3) -- (ipent4) -- (ipent5) -- (ipent1);
						\node[rectangle,draw=none,font=\scriptsize,text width=1cm,align=center] at (0,0) {rest of the graph};
						
					\end{scope}
					
					\draw[ultra thick,->] (-3, 2) -- ( 3, 2);
					\draw[ultra thick,->] (-3,-2) -- ( 3,-2);
					\draw[ultra thick,->] (-3,-2) -- (-3, 2);
					\draw[ultra thick,->] ( 3,-2) -- ( 3, 2);

				\end{tikzpicture}
			\end{center}
			\caption{The embedding of the vertex-deleted part of \(K_6\) (i.e.\ a \(K_5\)) that is used while performing the marriages from the proof of Theorem~\ref{thm:infinite_any_genus}.}\label{fig:k5}
		\end{figure}
		
	\end{proof}
	
	Perfectly hamiltonian graphs which are quintic immediately give rise to quartic and cubic perfectly hamiltonian graphs by removing a monochromatic perfect matching or the edges of a bichromatic hamiltonian cycle, respectively. Clearly, these removals conserve planarity. This observation can also be formulated in the following, perhaps more appealing way:
	
	\begin{corollary}\label{cor:underlying_cubic_and_quartic}
		There exist infinitely many planar $3$-connected cubic perfectly hamiltonian graphs which can be extended, by adding a suitable perfect matching of the complement, to planar $3$-connected quartic perfectly hamiltonian graphs, which by adding a further suitable perfect matching of the complement, can be extended to planar $3$-connected quintic perfectly hamiltonian graphs.
	\end{corollary}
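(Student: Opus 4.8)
The plan is to feed the already available family of planar quintic perfectly hamiltonian graphs through the ``remove a colour class'' observation preceding the statement, and then to check that $3$-connectedness survives. Concretely, set $g=0$ and $k=10$ in Theorem~\ref{thm:infinite_any_genus}: this yields infinitely many planar quintic graphs, necessarily of unbounded order, each carrying a $1$-factorisation $\{M_1,\dots,M_5\}$ in which all $\binom{5}{2}=10$ pairs are perfect, i.e.\ each such graph is perfectly hamiltonian. Given one such $G$, put $G':=G-M_5$ and $G'':=G'-M_4=G-M_4-M_5$. Then $G'$ is quartic and $G''$ cubic, both are planar (subgraphs of $G$, with the inherited embedding), and $\{M_1,\dots,M_4\}$, resp.\ $\{M_1,M_2,M_3\}$, is a $1$-factorisation of $G'$, resp.\ $G''$, every pair of which remains perfect --- whether two colour classes induce a hamiltonian cycle does not depend on the presence of the other classes. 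Moreover $M_4$ is a perfect matching lying in $E(\overline{G''})$ with $G''+M_4=G'$, and $M_5$ is a perfect matching lying in $E(\overline{G'})$ with $G'+M_5=G$, which is exactly the asserted chain of extensions. Since the orders of the graphs $G$ are unbounded and deleting edges does not change the order, the graphs $G''$ produced this way fall into infinitely many isomorphism classes.

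What still needs proof is that $G'$ and $G''$ are $3$-connected ($G$ itself being $5$-connected by Theorem~\ref{thm:infinite_any_genus}). I would establish the cleaner fact that \emph{every perfectly hamiltonian cubic or quartic graph is $3$-connected}. Such a graph $F$, with colour set $\{1,\dots,d\}$ and $d\in\{3,4\}$, is first shown to be $3$-edge-connected: as $F$ is hamiltonian it is $2$-edge-connected, so a violation gives a minimal edge cut (bond) $S$ of size exactly $2$. Now $S$ meets at most two colour classes, hence there are two colours $i,j$ for which $M_i\cup M_j$ either avoids $S$ altogether --- then it is disconnected --- or meets $S$ in exactly one edge --- then, being a disjoint union of cycles, it meets the vertex bipartition defined by the bond in an odd number of edges, which is impossible. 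Either way $M_i\cup M_j$ is not a hamiltonian cycle, a contradiction. (For $d=4$ one always lands in the first alternative; for $d=3$ one takes $j$ to be the colour avoiding $S$ and $i$ a colour meeting it once, unless both edges of $S$ share a colour, in which case the two remaining colours work.)

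Finally, edge-connectivity is upgraded to vertex-connectivity. For cubic graphs $\kappa=\lambda$, so $3$-edge-connected already means $3$-connected. For quartic graphs a parity count shows every edge cut has even size, so $F$ is in fact $4$-edge-connected; suppose $\{u,v\}$ is a $2$-vertex-cut separating $A$ from $B$. Comparing the at least four edges from $A$, and the at least four from $B$, to $\{u,v\}$ against the eight edge-ends available at $u$ and $v$ forces $uv\notin E(F)$ and exactly four edges from each of $A,B$ to $\{u,v\}$; as $u$ and $v$ are then nonadjacent on any hamiltonian cycle and there are no $A$--$B$ edges, every bichromatic hamiltonian cycle traverses $A$ in one block and $B$ in one block, so it enters $u$ once from the $A$-side and once from the $B$-side. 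But the four edges at $u$ use all four colours, so two of them, of colours $i$ and $j$ say, point to the same side, and then the hamiltonian cycle $M_i\cup M_j$ enters $u$ twice from that side, a contradiction. I expect this last step --- ruling out a $2$-vertex-cut in the quartic graph --- to be the only place that needs real care; the rest is bookkeeping.
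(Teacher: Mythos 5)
Your proposal is correct and follows essentially the same route as the paper: take the infinite planar quintic perfectly hamiltonian family from Theorem~\ref{thm:infinite_any_genus} (with $g=0$) and peel off colour classes, observing that planarity and the perfectness of the remaining pairs are preserved and that each deleted class is a perfect matching of the complement of the smaller graph. The paper presents the corollary as an immediate reformulation of this observation and does not justify the asserted $3$-connectedness of the cubic and quartic graphs; your parity/edge-cut argument (giving $3$-edge-connectedness, hence $3$-connectedness in the cubic case) and your elimination of $2$-vertex-cuts in the quartic case correctly supply that omitted step.
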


	By the aforementioned result of Mazzuoccolo~\cite{Ma08}, the line-graphs of the cubic graphs presented in the above corollary immediately yield another infinite family of planar quartic perfectly hamiltonian graphs whenever the cubic graphs have even size (this occurs when in the proof of Theorem~\ref{thm:infinite_any_genus} the graph $\Lambda$ is obtained by marrying an odd number of copies of $\Lambda_{10}$).
	
	Let $G$ be a $d$-regular $d$-edge-colourable graph. Denote with $C(G) = \{ 1, \ldots, d \}$ the colours in an edge-colouring of $G$. Throughout this article, for an edge $e$ of $G$, let $c(e)$ be the colour of $e$. Consider $M \subset E(G)$, put $$\varphi_i(M) = | \{ e \in M : c(e) = i \} |,$$ and denote the set of all $i$-coloured edges in $G$ with $E_i$. When we write $E_i \cup E_j$, we refer to the 2-factor formed by all edges of colour $i$ or $j$. The following lemma, the proof of which is easy and omitted, will be useful. Note that the case \(\ne 0\) can be included since we require that \((i,j)\) is a perfect pair.
	
	\begin{lemma}\label{lem:parity_edge_cut}
		In a $d$-regular $d$-edge-coloured graph $G$, for any edge-cut $M$ of $G$ and any distinct $i, j \in C(G)$ such that \((i,j)\) is a perfect pair, we have $$\varphi_i(M) + \varphi_j(M) \equiv 0 \ {\rm mod} \ 2 \ \quad {\rm {\it and}} \quad \ne 0.$$
	\end{lemma}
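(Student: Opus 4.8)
The plan is to view the edge-cut $M$ as the set $E_G(X, V(G)\setminus X)$ of edges with exactly one endpoint in a fixed vertex subset $X$ with $\emptyset \ne X \ne V(G)$, and to exploit the elementary fact that every cycle of $G$ meets $M$ in an even number of edges. First I would recall the reason for this parity fact: orienting the cycle and walking around it once, each traversal of an edge of $M$ is a ``side change'' between $X$ and $V(G)\setminus X$, and since the closed walk returns to its starting vertex the total number of side changes is even.

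Next, since $(i,j)$ is a perfect pair, the spanning subgraph $E_i \cup E_j$ is by definition a hamiltonian cycle $C$ of $G$. Applying the parity fact to $C$ gives $|E(C) \cap M| \equiv 0 \pmod 2$. Because $E(C) \cap M$ is exactly the set of edges of $M$ coloured $i$ or $j$, we have $|E(C)\cap M| = \varphi_i(M) + \varphi_j(M)$, which yields the congruence $\varphi_i(M) + \varphi_j(M) \equiv 0 \pmod 2$.

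Finally, for the ``$\ne 0$'' assertion I would invoke the fact that $C$ is \emph{hamiltonian} --- this is the only place where the perfect-pair hypothesis (rather than merely $E_i\cup E_j$ being a $2$-factor) is used. Since $X$ and $V(G)\setminus X$ are both nonempty and $C$ passes through every vertex of $G$, the cyclic walk along $C$ contains at least one vertex of $X$ and at least one vertex of $V(G)\setminus X$, hence at least one side change; thus $|E(C)\cap M| \ge 1$, and combined with the parity already established, $|E(C)\cap M| \ge 2 > 0$. Putting the two parts together gives $\varphi_i(M)+\varphi_j(M) \equiv 0 \pmod 2$ and $\varphi_i(M)+\varphi_j(M) \ne 0$, as claimed.

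There is no real obstacle here; the only point requiring a moment's care is the distinction just mentioned. For an arbitrary $2$-factor $E_i\cup E_j$ one could have one cycle component contained entirely in $X$ and another entirely in $V(G)\setminus X$, so that $\varphi_i(M)+\varphi_j(M)$ vanishes; it is precisely the hamiltonicity guaranteed by the perfect-pair hypothesis that rules this out, which is why the author notes that the case $\ne 0$ can be included.
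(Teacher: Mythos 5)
The paper omits the proof of this lemma as ``easy'', so there is no official argument to compare against; your write-up is exactly the standard argument one would expect, and it is correct. The parity part (a cycle changes sides of the partition an even number of times) and the nonvanishing part (a hamiltonian cycle meets both sides of a nontrivial vertex partition, so it must cross at least once, hence at least twice) are both sound, and you correctly isolate that hamiltonicity --- not merely the $2$-factor property --- is what forces $\varphi_i(M)+\varphi_j(M)\ne 0$, matching the authors' remark that the case $\ne 0$ can be included because $(i,j)$ is a perfect pair. The only implicit assumption is that an edge-cut means $E_G(X,V(G)\setminus X)$ for a nonempty proper $X$, which is consistent with the Diestel conventions the paper adopts.
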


	\begin{lemma}\label{lem:4connected}
		Every quintic perfectly hamiltonian graph $G$ is $4$-connected. If $G$ contains a $4$-vertex-cut $X$, then $G[X]$ is not a cycle. Moreover, there exists a planar quintic perfectly hamiltonian graph of connectivity $4$.
	\end{lemma}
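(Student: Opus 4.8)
The plan is to prove the three assertions in order, the common engine being that $G$ is $5$-edge-connected. I would obtain the latter from Lemma~\ref{lem:parity_edge_cut} together with the fact that each colour class is a perfect matching: for any edge cut $M = \delta(A)$ with $\emptyset \neq A \neq V(G)$ one has $\varphi_i(M) \equiv |A| \pmod 2$ for every colour $i$, so if $|A|$ is odd then all five $\varphi_i(M) \ge 1$ and $|M| \ge 5$, while if $|A|$ is even then (as every pair is perfect) Lemma~\ref{lem:parity_edge_cut} forbids two colours from having $\varphi_i(M) = 0$, giving $|M| \ge 8$.

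For the first assertion, as $G$ is hamiltonian it is $2$-connected, so it remains to exclude a vertex cut $X$ with $|X| \in \{2,3\}$. Applying $5$-edge-connectivity to $\delta(A)$ for each component $A$ of $G-X$ caps the number of components at $3$ and makes $G[X]$ sparse. I would then contract every component of $G-X$ to a single vertex, obtaining a multigraph $Q$ on at most six vertices with an inherited proper $5$-edge-colouring; each perfect pair $(i,j)$ of $G$ yields a pair of colour classes of $Q$ whose union is the image of the hamiltonian cycle $E_i \cup E_j$, hence a connected spanning Eulerian submultigraph of $Q$, which in the generic situation is a hamiltonian cycle of $Q$. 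Using the parities $\varphi_i(E(A,X)) \equiv |A| \pmod 2$ I would show that in the cleanest cases all colour classes of $Q$ are perfect matchings, so $Q$ would carry five perfect matchings pairwise forming a hamiltonian cycle; but $Q$ is a bipartite multigraph on four or six vertices, which admits far too few such matchings---for the four-vertex case only the two ``diagonal'' matchings, and for the six-vertex case the matchings, read as elements of $S_3$, would have to differ pairwise by a $3$-cycle and hence lie in one coset of $A_3$, of size three. When $G[X]$ contains an edge, or when $E_i \cup E_j$ has two hamiltonian-path pieces inside one component, a short parity-and-Boolean computation does the same job.

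For the second assertion suppose $G[X] \cong C_4$ with $|X| = 4$. Then $5$-edge-connectivity leaves exactly two components $A_1, A_2$ in $G-X$. Summing $\varphi_i(E(G[X])) = 4$ over the colours and using that for a perfect pair the hamiltonian cycle must reach both $A_1$ and $A_2$ (so $\varphi_i(E(A_s,X)) + \varphi_j(E(A_s,X)) \ge 2$), one first deduces that no colour class contains two edges of the $4$-cycle, hence exactly four colour classes contain one edge of $G[X]$ each. A parity argument on $|A_1|$ then either finishes at once (if $|A_1|$ is even, by pigeonhole among those four colours) or forces each of the four to have exactly one edge to each of $A_1$ and $A_2$, the fifth colour having none of the cycle's edges and sending one edge to one component and three to the other. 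Encoding which component each of the relevant edges joins as variables over $\mathbb{F}_2$, the requirement that every perfect pair contracts (in the analogous quotient $Q$) to a connected spanning submultigraph---equivalently, produces no digon---becomes a small linear system whose solutions are incompatible with the extra constraints coming from pairs involving the fifth colour; this contradiction completes the part.

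For the third assertion it suffices to exhibit one planar quintic perfectly hamiltonian graph of connectivity exactly $4$; for instance the $20$-vertex graph of Figure~\ref{fig:20_10}, in which a $4$-vertex-cut is easily located, or a graph found by computer among the planar quintic graphs of \cite{HMR11}. I expect the main obstacle to be the bookkeeping in the two-component subcase of the $3$-cut analysis and in the $C_4$ case, where the ``five pairwise-hamiltonian matchings'' pigeonhole is not directly available and one must track, colour by colour, how each class meets each component and the cut $G[X]$; the parity relations $\varphi_i(E(A,X)) \equiv |A| \pmod 2$ and Lemma~\ref{lem:parity_edge_cut} are exactly what keep this analysis finite and ultimately inconsistent.
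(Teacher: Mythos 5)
Your engine is the same as the paper's: from Lemma~\ref{lem:parity_edge_cut}, together with the fact that all ten pairs are perfect, every edge-cut $M$ has all five $\varphi_i(M)$ of the same parity with at most one of them zero, hence $|M|\ge 5$, $|M|\ne 6$, and $|M|=7$ forces the distribution $(1,1,1,1,3)$; the paper isolates exactly this as a Claim and then hunts for forbidden $4$- and $6$-edge-cuts in each configuration. Your quotient-multigraph reformulation is a genuinely nice repackaging in the subcases where $X$ is independent and each component meets $X$ in exactly five edges: there the colour classes do become perfect matchings of a small bipartite multigraph that must pairwise form hamiltonian cycles, and the two-diagonal count (for $|X|=2$) and the $S_3/A_3$ coset count (for the three-component $3$-cut) kill these cases more cleanly than the paper's $(1,1,3)$-versus-$(1,2,2)$ analysis.

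Three points need attention, however. First, the two-component $3$-cut subcase and the endgame of the $C_4$ case carry most of the weight, and you only assert that a ``short parity-and-Boolean computation does the same job''. These claims are in fact true --- e.g.\ for two components the cut sizes must be $(8,7)$, $(10,5)$, or $(8,5)$ with one edge inside $X$, and in each case $|\delta(A_2\cup\{x\})|$ is an even number forced below $8$, contradicting your own parity bound --- but they are not proofs until carried out, and this is precisely where the paper does its explicit case analysis. Second, in the $C_4$ case ``the image in $Q$ is connected'' is \emph{not} equivalent to ``no digon'': with all six quotient-degrees equal to $2$ the image could also split into two triangles, each using one of the two opposite $i$- and $j$-coloured edges of $G[X]$ together with $I$ or $O$; the paper avoids this by tracing the pairs $(1,2)$, $(2,3)$ and $(1,3)$ explicitly until $E_1\cup E_3$ visibly disconnects, and your linear system must encode connectivity, not merely digon-freeness. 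Third, your proposed witness for the last assertion fails: the $20$-vertex graph of Figure~\ref{fig:20_10} is $5$-connected (the proof of Theorem~\ref{thm:infinite_any_genus} marries copies of it and relies on the $5$-connectivity of the result, which a surviving $4$-vertex-cut in a copy would destroy), so no $4$-vertex-cut can be ``easily located'' in it; the paper instead exhibits a separate $26$-vertex graph of connectivity exactly $4$ (Figure~\ref{fig:quintic_ph_4conn}). Your fallback of a computer search among the graphs of \cite{HMR11} is legitimate and is what the paper effectively does, but the concrete example you name does not stand.
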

	
	\begin{proof}
		We will use the following observation which is a direct consequence of Lemma~\ref{lem:parity_edge_cut}.
		
		\smallskip
		
		\noindent \textbf{Claim.} \emph{Consider a quintic perfectly hamiltonian $5$-edge-coloured graph $G$ and let $f$ be the (set-valued) function mapping the cardinality of an edge-cut $M$ of $G$ to the set of all $5$-tuples $(\varphi_i(M))_{i=1}^5$, modulo colour permutations. Then
			$f(k) = \emptyset$ for all $k \in \{ 0, \ldots, 4, 6 \}$ and $f(7) \subseteq \{ (1, 1, 1, 1, 3) \}$.}
		
		\smallskip
		
		The connectivity of $G$ cannot be 2: Let $X_2$ be a 2-vertex-cut in $G$. Then for every $v \in X_2$, there exists a component of $G - X_2$ which contains at least two vertices adjacent to $v$. Let the edges connecting these vertices to \(v\) be coloured with colours $1$ and $2$. Then $E_1 \cup E_2$ cannot form a hamiltonian cycle in $G$.
		
		The connectivity of $G$ cannot be 3: Let $X_3$ be a 3-vertex-cut in $G$. Then \(G - X_3\) has at most three components. If there are three components, then \(X_3\) forms an independent set as otherwise we immediately have a \(k\)-edge-cut with \(k\leq4\). Let \(C\) be a component of \(G - X_3\). By the Claim, the distribution of edges from \(C\) to the vertices of \(X_3\) is either \((1,1,3)\) or \((1,2,2)\). In the former case we can easily find a 4-edge-cut, while in the latter case we would have a 6-edge-cut. So we can assume that \(G - X_3\) has two components. At most two vertices of \(X_3\) can be adjacent: if two edges $e_1, e_2$ coloured $1, 2$, respectively, are present in $E(G[X_3])$, then $E_1 \cup E_2$ cannot form a hamiltonian cycle in $G$.
		We call $v \in X_3$ a \emph{$c$-vertex} if there is a component of $G - X_3$ connected to $v$ by exactly $c$ edges. If $X_3$ contains at least two 1-vertices, then $G$ contains an edge-cut with four or fewer edges. If there is exactly one 1-vertex, then either the other two vertices of $X_3$ are a 2-vertex and a 3-vertex, or the other two vertices of $X_3$ are both 2-vertices and are adjacent; in both cases we obtain a 6-edge-cut. The same conclusion holds if there is no 1-vertex, i.e.\ $X_3$ is composed of three 2-vertices. In every case, a contradiction to the Claim is obtained.

		We now prove that if a quintic perfectly hamiltonian graph $G$ contains a $4$-vertex-cut $X$, then $G[X]$ is not a cycle. Suppose that there is a separating 4-cycle $C$ in $G$ (reductio ad absurdum). If two edges of $C$ have the same colour, then we easily get a contradiction by considering the cycle corresponding to that colour and any other colour in $C$. Therefore, the edges of $C$ have pairwise distinct colours. Assume $G$ to be drawn in the plane---possibly with crossing edges---such that the component $I$ of $G - C$ residing inside of $C$ has at most as many edges connecting it to $C$ as the component $O$ residing outside, i.e.\ at most six. By Lemma~\ref{lem:parity_edge_cut}, only the case of five edges \emph{emanating inside}, i.e.\ between $C$ and $I$, is possible. Thus, seven edges \emph{emanate outside}, i.e.\ between $C$ and $O$, among which one colour, say $1$, occurs thrice (by the Claim), and must be different from the four colours of $E(C)$. Denote the vertices of $C$ incident with these three edges by $v_1, v_2, v_3$, and the fourth vertex of $C$ by $v_4$ such that the labels appear consecutively on $C$. The $1$-coloured edge incident with $v_4$ emanates inside. Let the colour of $v_2v_3$ be $2$. Then the $2$-coloured edge incident with $v_4$ must emanate outside (otherwise $E_1 \cup E_2$ is non-hamiltonian), and the $2$-coloured edge incident with $v_1$ emanates inside. Let $v_1v_2$ be $3$-coloured. Inspecting the cycle $E_2 \cup E_3$, we conclude that the $3$-coloured edge incident with $v_3$ emanates outside, and the $3$-coloured edge incident with $v_4$ emanates inside. But then $E_1 \cup E_3$ has at least two components, a contradiction.
		
		For the last statement, see Figure~\ref{fig:quintic_ph_4conn} in the Appendix.
	\end{proof}

	Kotzig~\cite{Ko64} proved that planar quartic perfectly hamiltonian graphs with $2k$ vertices exist iff $k \ge 3, k \ne 4$. We now give a quintic analogue of this result, while the cubic analogue reads ``iff $k \ge 2$'' and is easily established.
	
	\begin{theorem}\label{thm:orders}%
		There exists a planar quintic perfectly hamiltonian graph on~$2k$ vertices if and only if $k \ge 10$.
	\end{theorem}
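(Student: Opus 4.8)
The plan is to establish the two implications separately; the ``if'' direction rests on Lemma~\ref{lem:marriage} together with a short list of explicit building blocks, and I would prove it by strong induction on $k$. The base consists of nine graphs: the planar quintic perfectly hamiltonian graph of Figure~\ref{fig:20_10} settles $k = 10$, and for each $k \in \{ 11, \ldots, 18 \}$ I would produce a planar quintic perfectly hamiltonian graph on $2k$ vertices by running the branch-and-bound edge-colouring algorithm on the planar quintic graphs of that order (the eight additional graphs to be displayed in an appendix figure). For the inductive step, let $k \ge 19$, so that $k - 9 \ge 10$; by the inductive hypothesis there is a planar quintic perfectly hamiltonian graph $G_0$ on $2(k-9)$ vertices. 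Marry $G_0$ with the $20$-vertex graph $\Lambda^{10}$ of Figure~\ref{fig:20_10}, performing the operation --- as described after Lemma~\ref{lem:marriage} --- so that planarity is preserved and the cyclic colour orders around the two deleted vertices agree. By Lemma~\ref{lem:marriage}, applied with the perfectly hamiltonian factor $\Lambda^{10}$ and with the parameter ``$k$'' there equal to $10$, the result is a planar quintic graph on $2(k-9) + 20 - 2 = 2k$ vertices in which all ten pairs of $1$-factors are perfect, that is, a planar quintic perfectly hamiltonian graph on $2k$ vertices. This completes the induction.

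For the ``only if'' direction I must rule out $2k$ vertices for $k \le 9$. If $k \le 5$ there is nothing to prove, since a quintic graph on $2k$ vertices has $5k$ edges while a planar graph on $2k$ vertices has at most $6k - 6$ edges, so that a planar quintic graph can exist only when $5k \le 6k - 6$, i.e.\ $k \ge 6$. For $k \in \{ 6, 7, 8, 9 \}$ I would use Lemma~\ref{lem:4connected}: every quintic perfectly hamiltonian graph is $4$-connected, so it suffices to inspect the $4$-connected planar quintic graphs on $12$, $14$, $16$ and $18$ vertices (for $12$ the only candidate is the icosahedron); generating these by computer, as was done for Lemma~\ref{lem:perfect_pairs_for_all_k}, and feeding each to the edge-colouring program shows that none of them admits a perfect $1$-factorisation. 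Together with Figure~\ref{fig:20_10} this gives the characterisation.

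The hard part is the computational input, which enters in two places. First, one must actually exhibit the eight intermediate building blocks for $k \in \{ 11, \ldots, 18 \}$: their existence is not a formality --- there are none for $k < 10$ --- and it cannot be derived from Lemma~\ref{lem:marriage} alone, since marrying $m$ copies of $\Lambda^{10}$ only produces $k = 9m + 1$; thus the ``if'' direction genuinely hinges on this finite search succeeding. Second, the exhaustive verification for $k \in \{ 6, 7, 8, 9 \}$ must be carried out, the most delicate case being that the icosahedron is not perfectly hamiltonian. Once these two finite computations are done, the inductive gluing via Lemma~\ref{lem:marriage} and the Euler-formula bound are routine.
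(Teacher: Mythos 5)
Your proposal is correct and follows essentially the same route as the paper: nine explicit base graphs on $2k$ vertices for $k \in \{10,\ldots,18\}$, iterated marriage with the $20$-vertex graph (adding $18$ vertices per step) via Lemma~\ref{lem:marriage} for the upper range, and a computer check of the planar $4$-connected quintic graphs on at most $18$ vertices (justified by Lemma~\ref{lem:4connected}) for the lower bound. The paper indeed supplies the eight additional building blocks of orders $22$ through $36$ in an appendix figure, exactly as your argument requires.
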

	
	\begin{proof}
		In this proof, the lower index of a graph \(G_n\) denotes the order of the graph. Let $G_{20}$ be the planar quintic perfectly hamiltonian edge-coloured graph from Figure~\ref{fig:20_10}, and $G_{22}$, $G_{24}$, $G_{26}$, $G_{28}$, $G_{30}$, $G_{32}$, $G_{34}$, $G_{36}$ the planar quintic perfectly hamiltonian edge-coloured graphs given in Figure~\ref{fig:orders} in the Appendix. Set $${\cal G} := \{ G_{20}, G_{22}, G_{24}, G_{26}, G_{28}, G_{30}, G_{32}, G_{34}, G_{36} \}.$$ Then, using Lemma~\ref{lem:marriage}, iteratively applying the marriage operation starting from the graphs in \({\cal G}\) yields the existence of planar quintic perfectly hamiltonian graphs on~$2k$ vertices for every $k \ge 10$ since \(G\omega G_{20}\) has 18 vertices more than \(G\).
		
		For the lower bound we used the planar 4-connected quintic graphs---we may restrict the connectivity due to Lemma~\ref{lem:4connected}---generated in \cite{HMR11} and the computer program described at the beginning of this section to check that no such graph on at most 18 vertices is perfectly hamiltonian.
	\end{proof}
	
	\begin{theorem}\label{thm:exponential_perfectly_hamiltonian}
		The number of planar quintic perfectly hamiltonian graphs grows at least exponentially.
	\end{theorem}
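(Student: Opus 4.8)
The plan is to use the marriage operation of Lemma~\ref{lem:marriage} to produce, for every order in a suitable arithmetic progression, exponentially many pairwise non-isomorphic planar quintic perfectly hamiltonian graphs. Fix a planar quintic perfectly hamiltonian edge-coloured graph $B$ that is \emph{cyclically $6$-edge-connected}, i.e.\ such that the five edges at a vertex are the only $5$-edge-cuts of $B$; such a $B$ exists, and indeed it can be checked with the programs described at the start of Section~\ref{sec:kotzig} that the $20$-vertex graph of Figure~\ref{fig:20_10} has this property and a small automorphism group. Since the orbits of $\operatorname{Aut}(B)$ on ordered pairs of distinct vertices are then numerous, we may fix two such pairs $\alpha$ and $\beta$ whose four associated pairs $\alpha,\beta,\alpha^{\mathrm{rev}},\beta^{\mathrm{rev}}$ (with $(p,q)^{\mathrm{rev}}=(q,p)$) lie in four distinct orbits of $\operatorname{Aut}(B)$. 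For a word $w=(w_1,\dots,w_t)\in\{\alpha,\beta\}^t$, let $\Gamma_w$ be the graph obtained by forming the marriage chain of $t$ copies of $B$ in which the $i$-th copy is glued along the chain using $w_i$ as its (left port, right port), permuting colours as permitted so that the marriages can be carried out and performing each marriage so as to preserve planarity. By Lemma~\ref{lem:marriage} every $\Gamma_w$ is a planar quintic perfectly hamiltonian graph, and all $2^t$ of them have the same order $n=t(|B|-2)+2$.

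The core step is to show that distinct words give non-isomorphic graphs. Each of the $t-1$ marriages introduces a five-element set of edges that is a $5$-edge-cut of $\Gamma_w$; call these the \emph{seams}. Because $\Gamma_w$ is perfectly hamiltonian and quintic, the Claim in the proof of Lemma~\ref{lem:4connected} (which rests on Lemma~\ref{lem:parity_edge_cut}) shows that $\Gamma_w$ has no edge-cut of size at most $4$ and none of size $6$. Using this together with the cyclic $6$-edge-connectivity of $B$, one verifies that a non-trivial $5$-edge-cut of $\Gamma_w$ can neither split a single copy of $B$ nor genuinely straddle a seam, so that the non-trivial $5$-edge-cuts of $\Gamma_w$ are \emph{exactly} the $t-1$ seams. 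These form a laminar, hence linearly ordered, family (one side of each being a ``prefix'' of the copies), so any isomorphism $\Gamma_w\to\Gamma_{w''}$ maps seams to seams preserving this order, acts on them as the identity or as a full reversal, and thereby identifies the sequences of port-decorated copies of $B$ underlying $w$ and $w''$, possibly after reversing one of them---which also reverses each port pair. By the choice of $\alpha$ and $\beta$, the reversed pairs $\alpha^{\mathrm{rev}},\beta^{\mathrm{rev}}$ are equivalent to neither $\alpha$ nor $\beta$, so the reversed case cannot occur and we conclude $w=w''$.

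Hence for every $n$ of the form $t(|B|-2)+2$ there are at least $2^{t}=2^{(n-2)/(|B|-2)}$ pairwise non-isomorphic planar quintic perfectly hamiltonian graphs on $n$ vertices, which grows at least exponentially; even if the orbit conditions on $\alpha,\beta$ could only be met partially, the fibres of $w\mapsto\Gamma_w$ would have bounded size and the conclusion would be unchanged.

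The main obstacle is the description of the $5$-edge-cuts of $\Gamma_w$ in the second paragraph: one must rule out spurious non-trivial $5$-edge-cuts, and for this it is essential that the building block $B$ itself has none. In particular this forbids obtaining $B$ by marriages, since a marriage of two graphs each containing a cycle on both sides of the gluing creates a non-trivial $5$-edge-cut; one therefore relies either on the direct verification for the small example above or on a construction tailored to keep cyclic edge-connectivity high---precisely the concern behind the two gluing techniques discussed later in the paper.
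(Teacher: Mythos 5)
Your proposal follows the same skeleton as the paper's proof: build a linear chain of blocks joined by marriages, observe that the $t-1$ seams are the only non-trivial $5$-edge-cuts of the resulting graph (using Lemma~\ref{lem:parity_edge_cut} and the absence of non-trivial $5$-edge-cuts in the block itself), conclude that any isomorphism permutes the seams while preserving their linear order up to reversal, and read off the word from the chain. Where you differ is in how the exponential variety is generated: the paper alternates between two explicitly given, non-isomorphic blocks $A$ and $B$ (distinguished by a cheap invariant, two facial quadrangles sharing an edge), whereas you use a single block and vary the ordered pair of attachment ports, distinguishing words via $\operatorname{Aut}(B)$-orbits of port pairs. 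This is a legitimate alternative, and it has the small advantage of needing only one certified building block; but it buys this at the cost of several extra claims that your write-up leaves unverified. First, you assert that the $20$-vertex graph of Figure~\ref{fig:20_10} has no non-trivial $5$-edge-cuts; this is a computational claim you have not checked, and nothing in the paper supports it --- if it fails, you should instead take for $B$ one of the two $22$-vertex graphs of Figure~\ref{fig:22}, whose essential edge-connectivity~$8$ the paper does certify. Second, the existence of port pairs $\alpha,\beta$ with $\alpha,\beta,\alpha^{\mathrm{rev}},\beta^{\mathrm{rev}}$ in four distinct orbits is plausible for a graph with small automorphism group but again unverified. Third, you should require the two ports of each middle copy to be non-adjacent: if they are adjacent, deleting the first port creates a seam edge at the second, and the subsequent marriage then produces an edge jumping over a whole segment, destroying both the clean seam structure and the extension argument by which you recover an automorphism of $B$ sending $(p_i,q_i)$ to $(p_i',q_i')$. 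Finally, note that the first and last copies each use only one port, so distinct words can yield identical graphs; this only costs a bounded factor, as you observe, but the stated count $2^t$ is not quite right. None of these points is fatal, but each would need to be pinned down for the argument to be complete, whereas the paper's two-block version avoids them entirely.
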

	
	\newcommand{\connectblock}{ %
		\begin{tikzpicture}
			\foreach \y in {0pt, 2pt, 4pt, 6pt, 8pt} \draw (0,\y) -- (.75em,\y);
			\useasboundingbox ($(current bounding box.south west) + (-.25em,0)$) rectangle ($(current bounding box.north east) + (.25em,0)$);
		\end{tikzpicture} %
	}
	\newcommand{\closeleft}{ %
		\begin{tikzpicture}
			\foreach \y in {0pt, 2pt, 4pt, 6pt, 8pt} \draw (0,4pt) -- (.5em,\y);
			\useasboundingbox (current bounding box.south west) rectangle ($(current bounding box.north east) + (.1em,0)$);
		\end{tikzpicture} %
	}
	\newcommand{\closeright}{ %
		\begin{tikzpicture}
			\foreach \y in {0pt, 2pt, 4pt, 6pt, 8pt} \draw (0,\y) -- (.5em,4pt);
			\useasboundingbox ($(current bounding box.south west) + (-.1em,0)$) rectangle (current bounding box.north east);
		\end{tikzpicture} %
	}
	
	\begin{proof}
		Consider the two building blocks \(A\) and \(B\) shown in Figure~\ref{fig:exponential_building_blocks}. If, in either block, we connect all the edges going to the left to a single vertex and all the edges going to the right to a single vertex, we obtain the two planar quintic perfectly hamiltonian graphs shown in Figure~\ref{fig:22} in the Appendix. These two graphs have essential edge-connectivity~8 (we recall that in a graph $G$, an edge-cut $M$ of $G$ is \emph{essential} if $G - M$ contains at least two non-trivial components, and that for a positive integer $k$, a graph is \emph{essentially $k$-edge-connected} if it does not have an essential edge-cut $M$ with fewer than $k$ edges), so every 5-edge-cut is \emph{trivial}, i.e.\ its removal leaves $K_1$ as a component. We construct a planar quintic perfectly hamiltonian graph by taking a sequence of these building blocks, connecting the dangling edges and closing the sequence on both sides by connecting the edges on either side to a single vertex. The resulting graph is perfectly hamiltonian because this operation corresponds to repeatedly performing the marriage of the two graphs on 22 vertices from Figure~\ref{fig:22} in the Appendix. Consider
		\[
		\Sigma := \closeleft X_1 \connectblock X_2 \connectblock X_3 \connectblock \dots \connectblock X_k \closeright
		\]
		where \(X_i \in \{ A,B \}\).
		
		Now assume two different choices of \(A\)'s and \(B\)'s for the \(X_i\) yield isomorphic \(\Sigma^1\) and \(\Sigma^2\). We write
		\[
		\Sigma^i := \closeleft X_1^i \connectblock X_2^i \connectblock X_3^i \connectblock \dots \connectblock X_k^i \closeright.
		\]
		
		The only non-trivial 5-edge-cuts occurring in \(\Sigma^1\) and \(\Sigma^2\) are between the building blocks, so any isomorphism must map these onto each other. Therefore, by construction, we have \(X^1_1 \cong X^2_1\) or \(X^1_1 \cong X^2_k\), whence \(X^1_i \cong X^2_i\) or \(X^1_i \cong X^2_{k + 1 - i}\) for all \(i \in \{ 1, \ldots, k \}\). Note that \(A\not\cong B\) since \(A\) contains two facial quadrangles that share an edge but \(B\) does not. This yields an exponential number of pairwise different \(\Sigma\)'s, depending on the choice of \(A\)'s and \(B\)'s for the \(X_i\), and thus the statement. \end{proof}

	\begin{figure}[!ht]
		\begin{tikzpicture}
			
			\node (1) at (2,0) {};
			\node (3) at (0,3) {};
			\node (4) at (0,-3) {};
			\node (6) at (1,0) {};
			\node (8) at (0,1.8) {};
			\node (12) at (0,-1.8) {};
			\node (14) at (.3,0) {};
			\node (15) at (0,.6) {};
			\node (17) at (-2,0) {};
			\node (18) at (-1,0) {};
			\node (20) at (0,-.6) {};
			\node (21) at (-.3,0) {};
			
			\node (2) at ($.5*(3) + .5*(1)$) {};
			\node (9) at ($.5*(3) + .5*(17)$) {};
			\node (5) at ($.5*(4) + .5*(1)$) {};
			\node (11) at ($.5*(4) + .5*(17)$) {};
			
			\node (19) at ($.5*(11) + .5*(20)$) {};
			\node (13) at ($.5*(14) + .5*(5)$) {};
			\node (16) at ($.5*(18) + .5*(8)$) {};
			\node (7) at ($.5*(6) + .5*(8)$) {};
			
			\draw [thick] (3) -- (2) -- (1) -- (5) -- (4) -- (11) -- (17) -- (9) -- (3)
			(8) -- (7) -- (6) -- (13) -- (12) -- (19) -- (18) -- (16) -- (8)
			(15) -- (14) -- (20) -- (21) -- (15)
			(17) -- (18) -- (21) -- (14) -- (6) -- (1)
			(17) -- (16) -- (9) -- (8) -- (3)
			(1) -- (7) -- (2) -- (8)
			(18) -- (11) -- (19) -- (20) -- (13) -- (5) -- (12) -- (4)
			(16) -- (15) -- (7) (15) -- (6)
			(19) -- (21) (13) -- (14) (20) -- (12);
			\draw (3) -- (3,3) (2) -- (3,1.5) (1) -- (3,0) (5) -- (3,-1.5) (4) -- (3,-3)
			(3) -- (-3,3) (9) -- (-3,1.5) (17) -- (-3,0) (11) -- (-3,-1.5) (4) -- (-3,-3);
			
			\node[draw=none,font=\large] at (0,-3.8) {\(A\)};
		\end{tikzpicture}
		\hfill
		\begin{tikzpicture}
			\node (0) at (2,0) {};
			\node (2) at (1,0) {};
			\node (6) at (0,-3) {};
			\node (7) at (0,-1.8) {};
			\node (9) at (.3,0) {};
			\node (10) at (0,.6) {};
			\node (11) at (0,1.8) {};
			\node (12) at (0,3) {};
			\node (16) at (0,-.6) {};
			\node (17) at (-.3,0) {};
			\node (20) at (-2,0) {};
			\node (21) at (-1,0) {};
			
			\node (4) at ($.5*(12)+.5*(0)$) {};
			\node (19) at ($.5*(12)+.5*(20)$) {};
			\node (1) at ($.5*(6)+.5*(0)$) {};
			\node (14) at ($.5*(6)+.5*(20)$) {};
			
			\node (3) at ($.5*(11)+.5*(2)$) {};
			\node (8) at ($.5*(7)+.5*(2)$) {};
			\node (15) at ($.5*(7)+.5*(21)$) {};
			\node (18) at ($.5*(11)+.5*(21)$) {};
			
			\draw [thick] (12) -- (4) -- (0) -- (1) -- (6) -- (14) -- (20) -- (19) -- (12)
			(11) -- (3) -- (2) -- (8) -- (7) -- (15) -- (21) -- (18) -- (11)
			(10) -- (9) -- (16) -- (17) -- (10)
			(20) -- (21) -- (17) -- (9) -- (2) -- (0)
			(20) -- (15) -- (14) -- (7) -- (6)
			(0) -- (3) -- (4) -- (11) -- (12)
			(21) -- (16) -- (8) -- (1) -- (7) (8) -- (9) (15) -- (16)
			(2) -- (10) -- (18) -- (19) -- (11) (17) -- (18) (10) -- (3);
			
			\draw (12) -- (3,3) (4) -- (3,1.5) (0) -- (3,0) (1) -- (3,-1.5) (6) -- (3,-3)
			(12) -- (-3,3) (19) -- (-3,1.5) (20) -- (-3,0) (14) -- (-3,-1.5) (6) -- (-3,-3);
			
			\node[draw=none,font=\large] at (0,-3.8) {\(B\)};
		\end{tikzpicture}
		\caption{The two building blocks used to show that the number of planar quintic perfectly hamiltonian graphs grows at least exponentially.}\label{fig:exponential_building_blocks}
	\end{figure}
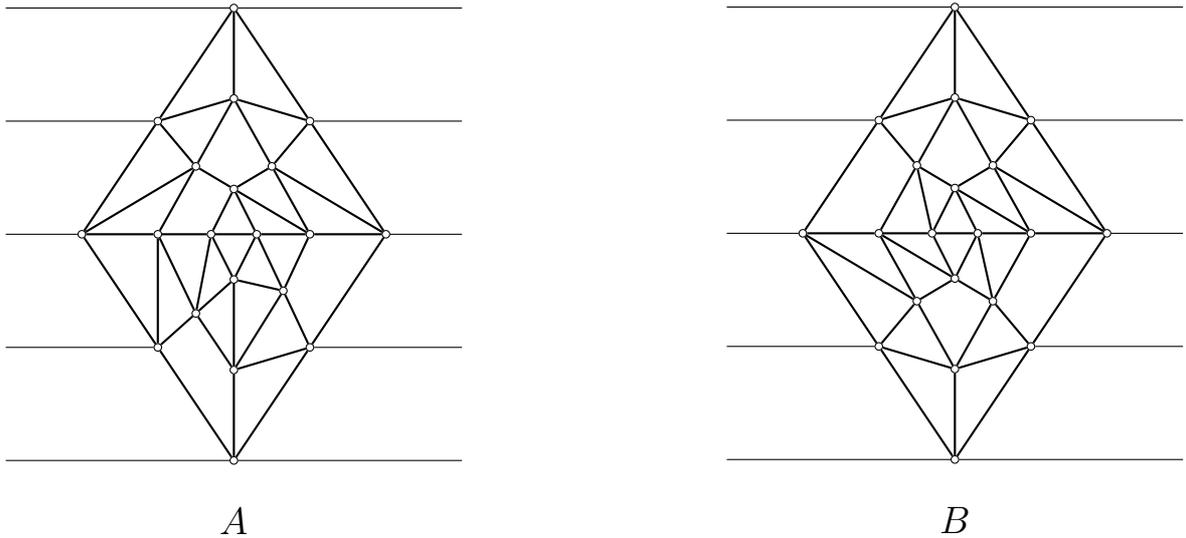

	Although the graphs described above all have order \(20k+2\), we can easily get an exponential number of graphs for any of the other orders by performing the marriage of the graphs constructed with one of the base graphs from Theorem~\ref{thm:orders}. A similar construction can be used to show that the number of planar cubic and quartic perfectly hamiltonian graphs also grows at least exponentially. The necessary building blocks for these two cases are shown in Figure~\ref{fig:exponential_building_blocks_cubic_quartic} in the Appendix. In Table~\ref{tab:counts} in the Appendix, we provide an overview of exact enumeration results on planar $k$-regular perfectly hamiltonian graphs of small order, for $k \in \{ 3, 4, 5 \}$.

	\subsection{Divorce}
	
	As we have discussed, Kotzig and Labelle introduced the marriage between two graphs. This technique has been frequently used, see for instance~\cite{bH14} or~\cite{Zh12}. With it, we can describe an infinite number of new perfectly hamiltonian graphs from a single perfectly hamiltonian graph. A defect of the marriage operation is the fact that it can only yield graphs with nontrivial 5-edge-cuts, so cyclically 6-edge-connected graphs cannot be obtained in this fashion. Planar quintic graphs have cyclical edge-connectivity at most 9 (consider the
	edges incident with the vertices of a triangle). We present in this subsection and the next subsection techniques aimed at producing quintic perfectly hamiltonian graphs with high cyclic edge-connectedness. Although the approach we now discuss has the advantage that no new non-trivial 5-edge-cuts are introduced (contrasting marriage), it has the disadvantage that we can only apply it once (marriage can be iterated ad infinitum, even with only one starting graph). We formulate our technique as applied to a certain graph, but it is natural to see this graph as a fragment of a larger quintic graph, therefore the term \emph{divorce}.

	\begin{theorem}\label{thm:divorce}
		There exist infinitely many planar quintic perfectly hamiltonian graphs with a non-trivial automorphism group containing a rotational symmetry.
	\end{theorem}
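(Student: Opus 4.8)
The plan is to introduce \emph{divorce} as a circular counterpart of marriage: rather than closing a chain of fragments at two apex vertices---which, as in the proof of Theorem~\ref{thm:exponential_perfectly_hamiltonian} or in marriage itself, forces non-trivial $5$-edge-cuts---we wrap a single fragment around an annulus. Call a plane graph $F$ a \emph{$5$-fragment} if it is drawn in a rectangle with exactly five edge-stubs leaving through the left side and five through the right side (in the same vertical order of colours), every vertex has degree five once the stubs at it are counted, and $F$ admits a proper $5$-edge-colouring in which, for each $k\in\{1,\dots,5\}$, the $k$-th left stub $L_k$ and the $k$-th right stub $R_k$ receive colour $k$. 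For an odd integer $n\ge 3$, let $R_n(F)$ be obtained from $n$ disjoint copies $F^{(1)},\dots,F^{(n)}$ of $F$ by joining, for all $\ell$ and $k$, the endpoint of $R_k$ in $F^{(\ell)}$ to the endpoint of $L_k$ in $F^{(\ell+1)}$ (superscripts modulo $n$) by an edge of colour $k$.

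Placing the copies side by side in an annulus shows that $R_n(F)$ is planar; it is quintic; and, if $F$ is chosen so that no parallel edges arise at the seams (easily arranged), it is simple. The cyclic shift $F^{(\ell)}\mapsto F^{(\ell+1)}$ is an automorphism generating a cyclic group of order $n$ inside $\mathrm{Aut}(R_n(F))$; once one checks---routinely, as for Theorem~\ref{thm:infinite_any_genus}---that $R_n(F)$ is $3$-connected, this automorphism is realised by a rotation of its essentially unique plane embedding. Observe moreover that any $5$-edge-cut of $R_n(F)$ either lies inside a single copy of $F$ or meets at least two seams and hence has at least ten edges; so, in contrast to marriage, divorce introduces no new non-trivial $5$-edge-cut, and it clearly cannot be iterated---matching the framing of this subsection.

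The heart of the argument is a colouring criterion. Call the colouring of a $5$-fragment $F$ \emph{all-crossing} if for every $i\ne j$ the subgraph of $F$ formed by the $\{i,j\}$-coloured edges together with $L_i,L_j,R_i,R_j$ consists of exactly two paths, one joining $L_i$ to $R_j$ and one joining $L_j$ to $R_i$, and nothing else. I claim that if $F$ carries an all-crossing colouring then, for every odd $n\ge 3$, $R_n(F)$ is perfectly hamiltonian. Fix $i\ne j$; then $E_i\cup E_j$ is a $2$-factor of $R_n(F)$, so it suffices to show it is connected. Begin at the $L_i$-endpoint inside $F^{(1)}$ and follow $E_i\cup E_j$: the all-crossing property routes the walk through $F^{(1)}$ from ``$L_i$'' to ``$R_j$'', then along the colour-$j$ seam into $F^{(2)}$, through $F^{(2)}$ from ``$L_j$'' to ``$R_i$'', along the colour-$i$ seam into $F^{(3)}$, and so on, the active colour alternating at each copy. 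Since $n$ is odd, after traversing $F^{(1)},\dots,F^{(n)}$ we arrive at the $L_j$-endpoint of $F^{(1)}$; a second lap through $F^{(1)},\dots,F^{(n)}$, now using in each copy the complementary $\{i,j\}$-path, returns us to the start. This single cycle therefore uses both $\{i,j\}$-paths of every copy and both relevant edges of every seam, i.e.\ all of $E_i\cup E_j$; hence $E_i\cup E_j$ is a hamiltonian cycle, any extra $\{i,j\}$-cycle inside some copy being excluded by the last clause of the definition. As $i,j$ were arbitrary, the assertion follows.

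It then remains to exhibit one planar $5$-fragment $F$ carrying an all-crossing colouring, and this is the main obstacle: the crossing pattern must hold for all ten colour pairs simultaneously, while $F$ must stay planar (so that $R_n(F)$ embeds in an annulus) and slim enough that $R_n(F)$ remains simple and $3$-connected. Such an $F$ exists; it is found by a straightforward computer search over small planar $5$-fragments (a variant of the branch-and-bound procedure described at the start of Section~\ref{sec:kotzig}), and an explicit example is recorded in Figure~\ref{fig:divorce}. Fixing such an $F$, the graphs $R_3(F),R_5(F),R_7(F),\dots$ are planar, quintic and perfectly hamiltonian, have $3|V(F)|,5|V(F)|,7|V(F)|,\dots$ vertices and so are pairwise non-isomorphic, and each admits a rotational automorphism; this proves the theorem.
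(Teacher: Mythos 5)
Your construction fails at exactly the point you identify as ``the main obstacle'': an all-crossing planar $5$-fragment does not exist, and no computer search will find one, because the requirement is topologically impossible. Suppose $F$ is drawn in a rectangle with the stubs $L_1,\dots,L_5$ and $R_1,\dots,R_5$ meeting the left and right sides in the same vertical order, as your gluing into an annulus requires. For $i<j$ the four boundary points then occur in the cyclic order $L_i, L_j, R_j, R_i$ around the rectangle, so the pairs $\{L_i,R_j\}$ and $\{L_j,R_i\}$ interleave on the boundary circle; two vertex-disjoint arcs in a disk whose endpoint pairs interleave must cross. Hence the two disjoint paths demanded by the all-crossing condition cannot coexist in any planar drawing of $F$ compatible with the annular embedding---for even one colour pair, let alone all ten. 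Moreover the crossing pattern is forced: if the $\{i,j\}$-trace on a copy pairs $L_i$ with $R_i$ and $L_j$ with $R_j$, then $E_i\cup E_j$ splits into two disjoint cycles, and if it pairs $L_i$ with $L_j$ and $R_i$ with $R_j$, the cycle closes after visiting only two copies. So no planar fragment whatsoever can make $R_n(F)$ perfectly hamiltonian for $n\ge 3$, and the figure you cite does not exist. (Your construction would be fine on the torus, or with a M\"obius-type twist, but then planarity is lost.)

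This obstruction is precisely why the paper's divorce operation looks so different. It glues only \emph{two} copies of a fragment, identifies them with a half-turn ($a$ with $d'$, $d$ with $a'$, and the edge $bc$ with $c'b'$), and---crucially---recolours the second copy by a nontrivial permutation $\pi$ of the five colours, so that the colour pair $(i,j)$ of the glued graph is realised in the second copy by a \emph{different} colour pair of $F$; the two halves may therefore contribute path systems of genuinely different shapes (conditions (i)--(x)), which sidesteps the interleaving problem. The price is that this produces a single graph per fragment and cannot be iterated; infinitude is obtained not by wrapping more copies around, but by varying the fragment itself, marrying a fixed suitable fragment with each of the infinitely many perfectly hamiltonian graphs from Theorem~\ref{thm:infinite_any_genus}. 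Indeed, the paper explicitly leaves open whether an operation of the kind you propose---one input, an infinite planar output family with high cyclic edge-connectivity---exists at all.
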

	
	\begin{proof} Let $F$ be a plane graph satisfying the following requirements. Let $F$ have four vertices $a, b, c, d$ lying in the same face $f$ and occurring in the facial walk of $f$ in the order $a, b, c, d$; let $F[ \{ a, b, c, d \}]$ contain exactly one edge, $bc$; let every vertex in $F$ be quintic except for $a,b,c,d$; let the vertices \(a, b, c\) have degree 3, and the vertex \(d\) degree 2. Finally, we require $F$ to be 5-edge-colourable (with colours 1, 2, 3, 4, 5), setting $c(bc) = 1$. We call $F$ \emph{suitable} if it satisfies the following properties listed by edge-colours (every path is bichromatic):
		
		\begin{enumerate}[(i)]
			\item (1,2): There is a hamiltonian $cd$-path using the edge $bc$.
			\item (1,3): There is a hamiltonian $bd$-path using the edge $bc$.
			\item (1,4): There is a hamiltonian $ac$-path in $F - d$ using the edge $bc$.
			\item (1,5): There is a hamiltonian $ab$-path in $F - d$ using the edge $bc$.
			\item (2,3): There is a hamiltonian $bc$-path.
			\item (2,4): There is a hamiltonian $ad$-path in $F - c$.
			\item (2,5): There is an $ad$-path $\mathfrak{p}_1$ and a $bc$-path $\mathfrak{q}_1$ such that $V(\mathfrak{p}_1)$ and $V(\mathfrak{q}_1)$ partition $V(F)$.
			\item (3,4): There is an $ab$-path $\mathfrak{p}_2$ and a $cd$-path $\mathfrak{q}_2$ such that $V(\mathfrak{p}_2)$ and $V(\mathfrak{q}_2)$ partition $V(F)$.
			\item (3,5): There is a hamiltonian $ad$-path in $F - b$.
			\item (4,5): There is a hamiltonian $bc$-path in $F - a - d$.
		\end{enumerate}
		
		Consider a suitable graph $B$, for instance the bottom half of the graph from Figure~\ref{fig:quintic_ph_4conn} in the Appendix, and $H$ a planar quintic perfectly hamiltonian graph. For a quintic vertex $x \in V(B)$ and an arbitrary vertex $y \in V(H)$ we obtain a suitable graph by considering $B_x \, \omega \, H_y$. Since there are infinitely many choices for $H$ by Theorem~\ref{thm:infinite_any_genus}, we obtain infinitely many suitable graphs.
		
		Let $F$ be one of these suitable graphs and $F'$ a copy thereof. For $v \in V(F)$, we denote by $v'$ its corresponding vertex in $F'$. We obtain the graph $G$ by identifying $a$ with $d'$ and $d$ with $a'$, as well as the edge $bc$ with the edge $c'b'$ (with these orientations). Then $G$ is a planar quintic perfectly hamiltonian graph with a non-trivial automorphism group containing a rotational symmetry.
		
		We will give a colouring of the edges of $G$ and show that every pair of colours induces a hamiltonian cycle. Every edge in $F$ already has a colour. We now colour the edges of $F'$ as follows: we apply the permutation $\pi$ to the colours such that $\pi(1) = 1$, $\pi(2) = 4$, $\pi(3) = 5$, $\pi(4) = 2$, $\pi(5) = 3$ (note that the edge $bc$, which is shared by $F$ and $F'$, has colour 1, which is invariant under $\pi$). The conditions for suitability imply that \(a\) is incident to the colours 1, 2, and 3; \(b\) is incident to the colours 1, 2, and 4; \(c\) is incident to the colours 1, 3, and 5; and \(d\) is incident to the colours 2 and 3. Therefore, the given colouring is a proper edge-colouring of \(G\). We treat the colour pairs one-by-one, giving two of the ten cases explicitly. All ten cases are illustrated in Figure~\ref{fig:divorce_proof} in the Supplementary material. The remaining situations can be dealt with similarly. All paths and cycles below are bichromatic, using only the two colours indicated in that case.
		
		\begin{itemize}
			\item (1,2): By (i), there is a hamiltonian $cd$-path $\mathfrak{h}$ in $F$ using the edge $bc$ with $c(bc) = 1$. Since $F'$ is a copy of $F$, by (iii) (and under the action of $\pi$) we have that there is a hamiltonian $a'c'$-path $\mathfrak{h}'$ in $F' - d'$ using $b'c'$, which after the identification of vertices yields a hamiltonian $bd$-path $\mathfrak{h}'$ in $F' - a$ using $bc$. Now $\mathfrak{h} \cup \mathfrak{h}'$ is a 1-2-coloured hamiltonian cycle in $G$. (Note that $\mathfrak{h}$ and $\mathfrak{h}'$ share the edge $bc$.)
			\item (2,5): By (vii), there is an $ad$-path $\mathfrak{p}$ and a $bc$-path $\mathfrak{q}$ in $F$ whose vertex sets partition $V(F)$ and by (viii) an $ab$-path $\mathfrak{p}'$ and a $cd$-path $\mathfrak{q}'$ in $F'$ whose vertex sets partition $V(F')$. Now $\mathfrak{p} \cup \mathfrak{q} \cup \mathfrak{p}' \cup \mathfrak{q}'$ is a 2-5-coloured hamiltonian cycle in $G$.
		\end{itemize}
	\end{proof}
	
	\subsection{Another operation yielding high cyclic edge-connectedness}
	
	As already mentioned, the `divorce' approach above is aimed at producing quintic perfectly hamiltonian graphs with high cyclic edge-connectedness. We pursue the same structural goal with yet another operation which we now describe. The drawbacks are that the former, by itself, only produces one new graph, while the latter does not preserve planarity.
	
	\begin{proposition}
		Let $G$ be a quintic perfectly hamiltonian $5$-edge-coloured graph containing a triangle $\Delta$ such that $N(V(\Delta)) = \{ x_1, \ldots, x_9 \}$ contains pairwise distinct vertices, and $H$ a copy of $G$ with $\Delta'$ denoting the triangle corresponding to $\Delta$. Set $N(V(\Delta')) = \{ y_1, \ldots, y_9 \}$, $G' := G - \Delta$, and $H' := H - \Delta'$. We write $V(\Delta) = \{ v_1, v_2, v_3 \}$ and $V(\Delta') = \{ w_1, w_2, w_3 \}$ such that, for $i \in \{ 1, 4, 7 \}$, $x_i,x_{i+1},x_{i+2}$ are adjacent to $v_{\lceil i/3 \rceil}$ and $y_i,y_{i+1},y_{i+2}$ are adjacent to $w_{\lceil i/3 \rceil}$.
		Set \(c(v_1v_2)=c(x_7v_3)=1\), \(c(v_2v_3)=c(x_1v_1)=2\), \(c(v_3v_1)=3\), \(c(x_jv_2)=j-1\) for \(4\leq j\leq6\), \(c(x_jv_1)=c(x_{j+6}v_3)=j+2\) for \(j\in\{2,3\}\), \(c(x_jv_{\left\lceil\frac{j}{3}\right\rceil})=c(y_jw_{\left\lceil\frac{j}{3}\right\rceil})\) for \(1\leq j \leq 9\), \(c(v_iv_j)=c(w_iw_j)\) for \(1\leq i \neq j \leq 3\).
		Then 
		$$(V(G') \cup V(H'), E(G') \cup E(H') \cup \{ x_1y_1, x_2y_8, x_3y_6, x_4y_4, x_5y_2, x_6y_9, x_7y_7, x_8y_5, x_9y_3 \})$$
		is a quintic perfectly hamiltonian graph.
	\end{proposition}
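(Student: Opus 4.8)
The plan is to put on the graph $\widehat G$ defined in the statement its natural $5$-edge-colouring and then verify, one colour pair at a time, that each of the ten unions of two colour classes is a single Hamiltonian cycle; together with the trivial degree count this shows $\widehat G$ is quintic and perfectly hamiltonian.

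\emph{Step 1: the colouring is well defined and proper.} Each $x_i$ loses exactly its edge to $\Delta$, which has colour $c(x_iv_{\lceil i/3\rceil})$, so inside $G'$ the vertex $x_i$ meets the four other colours; likewise $y_j$ inside $H'$ misses $c(y_jw_{\lceil j/3\rceil})$. Reading off the colours prescribed around $\Delta$, the colour classes of the nine added edges are $\{x_7y_7\}$, $\{x_1y_1\}$, $\{x_4y_4\}$, $\{x_2y_8,x_5y_2,x_8y_5\}$ and $\{x_3y_6,x_6y_9,x_9y_3\}$, of colours $1,2,3,4,5$ respectively, and for each of them the two endpoints are precisely the two vertices missing that colour. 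Assigning these colours restores degree $5$ everywhere and produces a proper $5$-edge-colouring of $\widehat G$ extending those of $G'$ and $H'$.

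\emph{Step 2: the local picture of a colour pair.} Fix $\{p,q\}\subseteq\{1,\ldots,5\}$. As $G$ is perfectly hamiltonian, $C_{pq}:=E_p\cup E_q$ computed in $G$ is a Hamiltonian cycle through $v_1,v_2,v_3$; choosing the colour-preserving isomorphism $G\cong H$ to send $v_i\mapsto w_i$ and $x_i\mapsto y_i$, the corresponding cycle $C'_{pq}$ in $H$ has the same shape with $x$'s replaced by $y$'s. Deleting $V(\Delta)$ breaks $C_{pq}$ into vertex-disjoint bichromatic paths covering $V(G')$ whose endpoints are exactly the $x_i$ with $c(x_iv_{\lceil i/3\rceil})\in\{p,q\}$; by the colour layout this endpoint set is the $x$-side of the union of the colour-$p$ and colour-$q$ classes of added edges, so each such $x_i$ meets exactly one added edge of a colour in $\{p,q\}$. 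The edges of $C_{pq}$ at $v_1,v_2,v_3$ are forced by the prescribed colours; listing them pair by pair shows that for $\{1,2\},\{1,3\},\{2,3\}$ the triangle is crossed by a single sub-path of $C_{pq}$ (hence one $G'$-path), for the six pairs meeting $\{4,5\}$ in exactly one element there are two $G'$-paths, and for $\{4,5\}$ there are three. The triangle sub-paths record exactly how these endpoints are wired through $\Delta$, hence which perfect matchings $\mu$ of the endpoint set can occur: forced for the three one-path pairs, one of two for the two-path pairs, and one of the (several) matchings which together with the three triangle sub-paths re-form a single cycle in the $\{4,5\}$ case.

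\emph{Step 3: gluing, and where the difficulty lies.} For each pair and each admissible $\mu$ I would exhibit $E_p\cup E_q$ in $\widehat G$ as the concatenation of the $G'$-paths, the $H'$-paths (which pair $y$-endpoints by the same $\mu$) and the added edges of colours $p,q$, which join $x_i$ to $y_{\sigma(i)}$ where $\sigma$ fixes $1,4,7$ and cyclically permutes $\{2,5,8\}$ and $\{3,6,9\}$; then check this is one cycle spanning $V(\widehat G)$. Since every segment used is bichromatic in $\{p,q\}$ and every added edge used has a colour in $\{p,q\}$, the resulting cycle is exactly $E_p\cup E_q$, so it is Hamiltonian. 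Contracting each $G'$- and $H'$-path to a point reduces each check to the connectivity of an explicit $2$-regular bipartite graph on at most six vertices, and the two $3$-cycles of $\sigma$ are precisely what make that graph connected in every case. The delicate pair is $\{4,5\}$: no triangle edge is bichromatic, $C_{pq}$ meets $G'$ in three paths, and there are eight a priori endpoint matchings $\mu$, so one must confirm that the $3$-cycles of $\sigma$ turn the glued $2$-factor into a single $6$-cycle for all of them. The identity $\mu\,(\sigma^{-1}\mu\sigma)\,\mu=(\mu\,\sigma^{-1}\mu\sigma)^{-1}$ explains structurally why the component count is controlled, but the quickest route is the finite check, which goes through; the other nine pairs are routine once Step 2 is in place.
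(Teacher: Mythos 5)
Your proposal is correct and follows essentially the same route as the paper: the paper likewise sets up the proper colouring of the nine connecting edges (one each of colours $1,2,3$ and triples of colours $4$ and $5$) and then splits the verification into the identical three situations — one $G'$-path when both colours lie on $\Delta$, two paths (two sub-cases) when exactly one does, and three paths with eight admissible endpoint matchings for the pair $\{4,5\}$ — declaring the resulting finite check ``long and tedious'' and omitting it. Your write-up actually supplies more of the gluing detail than the published proof does, and the finite checks you defer to do all succeed.
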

	
	\begin{proof}
		As in the proof of Theorem~\ref{thm:divorce}, the verification that the various bichromatic 2-factors in the new graph are indeed hamiltonian cycles is not complicated, but long and tedious. It is therefore omitted here, but we note that there are three distinct situations. When looking at two colours that are used in \(\Delta\), a single bichromatic path is left in \(G'\) and thus there is only one case for this situation. When one colour of \(\Delta\) and one colour not in \(\Delta\) is considered, the bichromatic hamiltonian cycle corresponds to two paths in \(G'\), and thus there are two distinct cases for this situation. Finally, considering the two colours not in \(\Delta\), the bichromatic hamiltonian cycle corresponds to three paths in \(G'\), and thus there are eight distinct cases for this situation.
	\end{proof}

	It remains open whether there exists an operation which yields from one input graph an infinite family, preserves planarity, 5-regularity, and perfect hamiltonicity, and has high cyclic edge-connectedness.

	\section{Counting colourings}
	
	We consider two edge-colourings to be distinct if one cannot be obtained from the other by a permutation of colours. Equivalently, distinct edge-colourings induce different partitions of the graph's edge set. The first question we want to address is motivated by the extendability of partial colourings. For which $k$ is it true that \emph{every} planar quintic 5-edge-colourable graph has a 1-factorisation with exactly $k$ perfect pairs? This is certainly not true for $k \in \{ 1, 9, 10 \}$ as shown by the icosahedron, see Proposition~\ref{obs:platonic} given toward the end of this section. Moreover, the following theorem proves that it does not hold for $k > 0$. The case $k=0$ remains open.
	
	\begin{theorem}\label{thm:infinite_zero_pairs}
		There exists an infinite family of planar $5$-connected quintic graphs in which every $1$-factorisation has zero perfect pairs.
	\end{theorem}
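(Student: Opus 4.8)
The plan is to isolate a single planar $5$-connected quintic ``atom'' $G_0$ all of whose $1$-factorisations already have zero perfect pairs, and then to manufacture an infinite family by marrying copies of a known perfectly hamiltonian building block onto $G_0$. To produce $G_0$ I would feed the planar $5$-connected quintic graphs generated in \cite{HMR11} to the edge-colouring routine of Section~\ref{sec:kotzig}, enumerating all $1$-factorisations of each and retaining the smallest graph for which, in every $1$-factorisation and for every pair of colours, the corresponding bichromatic $2$-factor fails to be a hamiltonian cycle; such a $G_0$ is then exhibited in a figure, its defining property being a finite check. Among the candidates I would moreover keep one possessing a vertex $x$ such that no four neighbours of $x$ separate $G_0 - x$, which is what will make the subsequent gluings $5$-connected.

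The structural heart is the assertion that marriage cannot create a perfect pair out of nothing: \emph{if $G$ is a quintic graph every $1$-factorisation of which has zero perfect pairs and $H$ is any quintic graph, then every $1$-factorisation of $G_x\,\omega\,H_y$ has zero perfect pairs.} I would prove this as follows. Fix a $1$-factorisation $c$ of $G_x\,\omega\,H_y$ and consider the set $M=\{x_iy_i\}_{i=1}^{5}$ of marriage edges, a $5$-edge-cut. Since every bichromatic class $E_a\cup E_b$ is an even spanning subgraph, it meets $M$ in an even number of edges, so $\varphi_1(M),\dots,\varphi_5(M)$ all have the same parity; being non-negative with sum $|M|=5$, each equals $1$, and $M$ is rainbow. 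Hence deleting $M$ and re-attaching $x$ by joining it to each $x_i$ with the colour formerly carried by $x_iy_i$ (and symmetrically for $y$) yields genuine $1$-factorisations $c_G$ of $G$ and $c_H$ of $H$: properness at $x$ uses that those five colours are distinct, properness at $x_i$ uses that $x_iy_i$ already occupied that colour there, and $c_G$ agrees with $c$ on $E(G-x)$. Now fix colours $a\ne b$. By hypothesis $(a,b)$ is not perfect in $c_G$, so the $2$-factor $E_a\cup E_b$ of $G$ taken under $c_G$ has at least two cycles, at least one of which, say $C$, avoids the vertex $x$. Then $C$ uses no edge at $x$, so it lies inside $G-x$ and carries only colours $a,b$ under $c$ as well; thus $C$ is a proper sub-cycle of the $2$-factor $E_a\cup E_b$ taken under $c$, which is therefore disconnected, and $(a,b)$ is not perfect in $c$.

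With the claim in place, let $\Lambda^{10}$ be the planar $5$-connected quintic perfectly hamiltonian graph used in the proof of Theorem~\ref{thm:infinite_any_genus} (further copies of $G_0$ would serve as well), and perform all marriages planarly, which is always possible since the neighbours of a deleted vertex lie on a common face. For each $m\ge 0$ the graph
\[
\Gamma_m \;:=\; G_0 \,\omega\, \underbrace{\big( \Lambda^{10} \,\omega\, \cdots \,\omega\, \Lambda^{10} \big)}_{m \text{ copies}}
\]
is planar and quintic; its orders are pairwise distinct, so these graphs are pairwise non-isomorphic; and the claim above, applied $m$ times, shows that every $1$-factorisation of $\Gamma_m$ has zero perfect pairs. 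That $\Gamma_m$ is $5$-connected is then a routine verification with Menger's Theorem, exactly as in the proof of Theorem~\ref{thm:infinite_any_genus}, once the marriage vertices are chosen as indicated above so that no four of the neighbours of a deleted vertex form a cut.

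The one delicate point is the very existence of $G_0$: nothing in the argument bounds its size in advance, so the search must simply be carried far enough, and one must confirm that some qualifying graph also carries a well-behaved gluing vertex. Should that fail --- for instance if every planar $5$-connected quintic graph with the property turned out to be cyclically highly edge-connected, hence indecomposable by marriage --- the fallback would be a direct ``necklace'' construction in the style of Theorem~\ref{thm:exponential_perfectly_hamiltonian}, designing a fragment whose inter-fragment cuts, analysed via the parity considerations behind Lemma~\ref{lem:parity_edge_cut}, between them miss every colour pair; but I expect the atom-plus-marriage route to succeed and to be the cleaner one.
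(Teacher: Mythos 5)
Your reduction step is correct and even somewhat stronger than what the paper records: the parity argument shows that in \emph{any} $1$-factorisation of $G_x\,\omega\,H_y$ the five marriage edges form a rainbow cut, so the colouring restricts to genuine $1$-factorisations of $G$ and of $H$, and a non-hamiltonian bichromatic $2$-factor of $G$ avoiding $x$ survives as a proper sub-cycle of the corresponding $2$-factor of the married graph. This cleanly propagates the property ``every $1$-factorisation has zero perfect pairs'' through marriage, and it would indeed turn one example into an infinite family (modulo the routine $5$-connectivity and planarity checks, which you handle adequately).

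The genuine gap is the existence of the atom $G_0$, which is precisely the hard content of the theorem and which you defer to an unbounded computer search with no argument that it terminates successfully. Nothing in your proposal shows that \emph{any} planar $5$-connected quintic graph with this property exists, and the evidence in the paper points against finding one in the feasible search range: the icosahedron, the only small $5$-connected example, admits $1$-factorisations with up to eight perfect pairs, and the paper explicitly leaves open whether every planar quintic $5$-edge-colourable graph has some $1$-factorisation with zero perfect pairs --- so no small certified example was found. The paper's actual proof sidesteps the search entirely with a Rosenfeld--Zaks substitution: take a non-hamiltonian planar cyclically $5$-edge-connected cubic graph $H$ (Grinberg's $44$-vertex graph), double two of its three colour classes to get a quintic multigraph, and substitute a vertex-deleted $5$-connected quintic graph $G_g$ for each vertex. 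Any hamiltonian cycle would have to pass twice through some copy of $G_g$, using four edges of the surrounding $5$-edge-cut, so that removing its two colour classes would leave a bridged cubic graph --- impossible for a $3$-edge-colourable graph. This yields explicit (large, on the order of $500$ vertices) members of the family and, incidentally, the base graph your construction needs; without some such theoretical existence argument, your proposal does not prove the theorem. Your ``fallback'' necklace idea is too vague to close this hole.
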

	
	\begin{proof}
		We use an argument the essential idea of which can already be found in work of Rosenfeld~\cite{Ro89} and Zaks~\cite{Za76}. Let $H$ be a planar 3-connected cyclically 5-edge-connected cubic graph. By the Four Colour Theorem, $E(H)$ can be partitioned into 1-factors $F_1$, $F_2$, $F_3$. Replacing, in $H$, each edge of $F_1 \cup F_2$ by a double edge, we obtain a planar quintic 5-edge-colourable multigraph $H_5$. Let $G$ be a planar 5-connected 5-edge-colourable quintic graph, and $g \in V(G)$. Put $G_g := G - g$. Construct $H(G_g)$ by substituting a copy of $G_g$ for each vertex of $H_5$. Rosenfeld showed that $H(G_g)$ is planar, 5-connected, and quintic. $H$ is now chosen to be non-hamiltonian (e.g.\ Grinberg's 44-vertex graph~\cite{Gr68}). Assume that \(H(G_g)\) has a \(1\)-factorisation which has at least one perfect pair. Any hamiltonian cycle \(\mathfrak{h}\) in $H(G_g)$ must visit every copy of $G_g$ at least once. As \(H\) is non-hamiltonian, there is at least one copy of \(G_g\) which is visited twice. Let \(M\) be the set of five edges from \(H_5\) incident to that copy of \(G_g\). Thus, \(\mathfrak{h}\) contains four edges of \(M\). If we remove the edges of the two colour classes constituting \(\mathfrak{h}\), what remains should be a planar cubic 3-edge-coloured graph, but since four edges of the 5-edge-cut \(M\) are contained in \(\mathfrak{h}\), this graph has a bridge, a contradiction since a cubic graph with a bridge is not 3-edge-colourable.
	\end{proof}
	
	The zero perfect pairs arise from the interplay of the hamiltonian cycle and an odd edge-cut which creates a partial edge-colouring of the graph that cannot be completed. We will now see that as soon as we add an additional constraint to prevent this situation, the behaviour changes drastically. We will call a hamiltonian graph \(G\) \emph{charonian} if for any hamiltonian cycle~\(\mathfrak{h}\) in $G$ the graph \(G-E(\mathfrak{h})\) is bridgeless. The word `charonian' comes from Charon, the ferryman of Hades who carries souls across the supposedly bridgeless river Styx that divides the world of the living from the world of the dead.
	
	A charonian graph \(G\) on $n$ vertices is \emph{strongly charonian} if for any 2-factor~\(\mathfrak{f}\) consisting of a \(4\)-cycle and an \( (n-4) \)-cycle the graph \(G-E(\mathfrak{f})\) is bridgeless. Let us briefly motivate the choice of this terminology: being strongly charonian is stronger than just being charonian (i.e.\ charonian with an additional condition), but we chose, in a certain sense, the weakest possible additional requirement in order to obtain an as large as possible family of graphs. The icosahedron is a planar quintic strongly charonian graph, and it even has the following stronger property, which might be of separate interest.
	
	\begin{proposition}
		For any $2$-factor $\mathfrak{f}$ in the icosahedron $G$, the graph $G - E(\mathfrak{f})$ is bridgeless.
	\end{proposition}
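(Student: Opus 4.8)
The plan is a proof by contradiction that uses only the following features of the icosahedron $G$: it is simple, $5$-regular, has twelve vertices, and is $K_{2,3}$-free, i.e.\ any two of its vertices have at most two common neighbours. (The last point is immediate from its structure: two adjacent vertices share exactly the two apices of the two triangular faces on their common edge, two vertices at distance~$2$ share exactly two common neighbours, and two antipodal vertices share none.) So suppose, for contradiction, that $\mathfrak{f}$ is a $2$-factor of $G$ for which the cubic graph $G - E(\mathfrak{f})$ has a bridge $e = uv$. Deleting $e$ splits the component of $G - E(\mathfrak{f})$ containing it; let $A$ be the vertex set of the part containing $u$ and put $B := V(G) \setminus A$, so that $e$ is the \emph{unique} edge of $G - E(\mathfrak{f})$ with one end in $A$ and one in $B$. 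In $(G - E(\mathfrak{f}))[A]$ the vertex $u$ has degree~$2$ and every other vertex has degree~$3$, so the degree sum $3|A| - 1$ is even; hence $|A|$ is odd, and symmetrically $|B|$ is odd. Since $|A| + |B| = 12$, we may assume $|A| \le |B|$, and so $(|A|, |B|) \in \{ (1,11),\, (3,9),\, (5,7) \}$. The task then reduces to eliminating these three cases.

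First, the cases $|A| \in \{1,3\}$ fall to edge counting: $(G - E(\mathfrak{f}))[A]$ has exactly $\tfrac12(3|A| - 1)$ edges, which is impossible for $|A| = 1$ (this would be one ``edge'' on a single vertex; concretely, $u$ would send all three of its $(G - E(\mathfrak{f}))$-edges into $B$, contradicting the uniqueness of $e$) and for $|A| = 3$ (three vertices span at most three edges, not four).

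The remaining case $|A| = 5$ is, I expect, the only delicate one. Here $(G - E(\mathfrak{f}))[A]$ is a five-vertex graph with exactly $\tfrac12(3 \cdot 5 - 1) = 7$ edges in which $u$ has degree~$2$ and the other four vertices have degree~$3$. I would argue that this determines the graph up to isomorphism: its complement inside $K_5$ has three edges and degree sequence $(2,1,1,1,1)$, and the only graph with that degree sequence is a path $P_3$ (centred at $u$) together with a disjoint edge; thus $(G - E(\mathfrak{f}))[A] \cong K_5 - (P_3 \cup K_2)$. In this seven-edge graph the two endpoints of the deleted edge are each adjacent to all three vertices of the deleted path, so it contains $K_{2,3}$ as a subgraph --- and therefore $G$ would contain two vertices with three common neighbours, contradicting its $K_{2,3}$-freeness.

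Since all three cases are impossible, no bridge can occur, and $G - E(\mathfrak{f})$ is bridgeless for every $2$-factor $\mathfrak{f}$. The one step that needs care is the $|A| = 5$ case: one must verify that the cubic restriction to $A$ really is rigid enough to pin down $(G - E(\mathfrak{f}))[A]$ before invoking $K_{2,3}$-freeness --- everything else is bookkeeping on parities and small edge counts. (An alternative for that case: $K_{2,3}$-freeness already forces every five-vertex induced subgraph of $G$ to have at most~$7$ edges, hence $G[A] = (G - E(\mathfrak{f}))[A]$, after which one checks the possible degree sequences of seven-edge graphs on five vertices and arrives at the same contradiction.)
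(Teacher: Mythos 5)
Your proof is correct, and it is worth noting that the paper states this proposition without any proof at all (consistent with its reliance on computer verification for several small-case claims), so your argument supplies a self-contained justification that the paper does not. All the steps check out: the parity argument forcing $|A|$ and $|B|$ odd is sound even when $G-E(\mathfrak{f})$ has further components, since $e$ being a bridge still makes it the unique $A$--$B$ edge; the cases $|A|\in\{1,3\}$ die on edge counts as you say; and in the case $|A|=5$ the complement of $(G-E(\mathfrak{f}))[A]$ in $K_5$ has degree sequence $(2,1,1,1,1)$, which indeed forces $P_3\cup K_2$ uniquely (the two degree-$1$ vertices not adjacent to $u$ have no choice but to be matched to each other), so the induced subgraph contains a $K_{2,3}$ and hence two vertices with three common neighbours. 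The $K_{2,3}$-freeness of the icosahedron is also correctly justified: since it is a $5$-connected triangulation, every edge lies in exactly two (facial) triangles, so adjacent vertices share exactly two neighbours, distance-$2$ vertices share two, and antipodal vertices share none. In fact your argument never uses that $\mathfrak{f}$ is a $2$-factor beyond making $G-E(\mathfrak{f})$ a spanning cubic subgraph, so you have proved the slightly stronger statement that every spanning cubic subgraph of the icosahedron is bridgeless, using only $5$-regularity, the order $12$, and the bound on common neighbours.
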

	
	There are three planar quintic charonian graphs on 24~vertices and two planar quintic charonian graphs on 28~vertices. Those are all the planar quintic charonian graphs on up to 30 vertices. Except for the icosahedron they all have vertex-connectivity 2.
	
	\begin{theorem}\label{thm:2connected_charonian_fam}
		Let \(G\) be a planar quintic charonian graph on $n$ vertices containing adjacent edges \(e, e'\) such that there are \(h>0\) hamiltonian cycles in \(G\) through $e$ and $e'$. Then there exists a family of planar quintic charonian graphs such that for each integer \(k>0\) there is a member in the family with \(kn\) vertices and at least \(h^k\) \(1\)-factorisations, each containing at least one perfect pair of \(1\)-factors.
	\end{theorem}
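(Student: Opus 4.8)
The plan is to glue $k$ copies of $G$ into a cyclic chain at the marked edges. Write $e=uv$ and $e'=vw$, so that $v$ is the common endpoint, and let $G_1,\dots,G_k$ be disjoint copies of $G$, with $u_j,v_j,w_j,e_j,e'_j$ the objects in $G_j$ corresponding to $u,v,w,e,e'$. In each copy delete $e_j$ and $e'_j$; this frees one half-edge at $u_j$, one at $w_j$, and two at $v_j$. Now reconnect: for $1\le j\le k-1$ add the edges $v_jv_{j+1}$ and $w_ju_{j+1}$, and close the chain with the two ``twisting'' edges $v_ku_1$ and $w_kv_1$. A direct check shows that every vertex is again quintic; call the resulting graph $\Gamma_k$. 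Performing the gluings around an annulus, using an embedding of $G$ in which $e$ and $e'$ bound a common face, keeps $\Gamma_k$ planar, and it has $kn$ vertices. (For $k=1$ one simply takes $G$ itself.)

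Next come the hamiltonian cycles. A hamiltonian cycle of $G$ through $e$ and $e'$ contains the subpath $uvw$, so deleting $v$ turns it into a hamiltonian $uw$-path of $G-v$; this correspondence is a bijection, so $G-v$ has exactly $h$ hamiltonian $uw$-paths. Choosing for every $j$ one such path $P_j$ in $G_j-v_j$ and splicing the $P_j$'s together through the gluing edges — the vertices $v_j$ being strung along the path $v_1v_2\cdots v_k$, which the twisting edges weave into the long cycle through $u_1,P_1,w_1,u_2,P_2,w_2,\dots,u_k,P_k,w_k$ — yields a hamiltonian cycle $\mathfrak{H}$ of $\Gamma_k$. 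Distinct tuples $(P_1,\dots,P_k)$ give distinct $\mathfrak{H}$, so $\Gamma_k$ has at least $h^k$ hamiltonian cycles. For each such $\mathfrak{H}$, colour it alternately with colours $1$ and $2$ (it is an even cycle, $n$ and hence $kn$ being even) and $3$-edge-colour the cubic graph $\Gamma_k-E(\mathfrak{H})$ with colours $3,4,5$; this is possible by the Four Colour Theorem provided $\Gamma_k-E(\mathfrak{H})$ is bridgeless, which follows from the charonian property of $\Gamma_k$ established below. The outcome is a $1$-factorisation of $\Gamma_k$ in which $(1,2)$ is a perfect pair. These $1$-factorisations are pairwise distinct: every $\mathfrak{H}$ produced above uses the two edges at $v_2$ that leave the copy $G_2$ (namely $v_1v_2$ and $v_2v_3$, or $v_1v_2$ and $v_2u_1$ when $k=2$), and these are adjacent, hence receive the two distinct colours $1$ and $2$; so the union of the two colour classes through these two edges equals $\mathfrak{H}$, and since that union is determined by the partition alone, the $1$-factorisation determines $\mathfrak{H}$. (For $k=1$ the same conclusion is even easier, since colouring a hamiltonian cycle through $e,e'$ with $1,2$ forces $\{c(e),c(e')\}=\{1,2\}$.) This yields at least $h^k$ $1$-factorisations, each with a perfect pair.

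It remains to show that $\Gamma_k$ is charonian, i.e.\ that $\Gamma_k-E(\mathfrak{C})$ is bridgeless for \emph{every} hamiltonian cycle $\mathfrak{C}$ of $\Gamma_k$. Between consecutive copies there are exactly two edges, so $\mathfrak{C}$ meets each of these ``gluing cuts'' in $0$ or $2$ edges, and since $\mathfrak{C}$ spans $\Gamma_k$ it skips at most one of them. If it skips none, then inspecting the four edges leaving a single copy (two at each end) shows that in every $G_j$ the vertex $v_j$ is a pass-through on $\mathfrak{C}$ and $\mathfrak{C}\cap(G_j-v_j)$ is a hamiltonian $u_jw_j$-path; thus $\mathfrak{C}$ restricted to $G_j$ is $\mathfrak{h}_j-v_j$ for some hamiltonian cycle $\mathfrak{h}_j$ of $G$ through $e,e'$, and $\Gamma_k-E(\mathfrak{C})$ is the disjoint union of the graphs $G_j-E(\mathfrak{h}_j)$, each bridgeless because $G$ is charonian. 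If $\mathfrak{C}$ skips the gluing cut between $G_{j_0}$ and $G_{j_0+1}$, then on these two copies $\mathfrak{C}$ restricts to a hamiltonian \emph{path} rather than a cycle — but one of $e,e'$ can be added to close it into a hamiltonian cycle $\mathfrak{c}$ of $G$, so each of the corresponding two pieces of $\Gamma_k-E(\mathfrak{C})$ equals $G-E(\mathfrak{c})$ with at most one further edge removed; the two skipped gluing edges, which lie in $\Gamma_k-E(\mathfrak{C})$ and join these two pieces, then supply a detour that prevents the removed edge from creating a bridge. Hence $\Gamma_k-E(\mathfrak{C})$ is bridgeless in every case.

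The main obstacle is this last step, and within it the subcase where $\mathfrak{C}$ fails to cross one gluing cut: there the reduction to the charonian property of $G$ is no longer copy-by-copy, and one must verify that the two skipped gluing edges, together with the connectivity of the pieces $G-E(\mathfrak{c})$, destroy every would-be bridge. This is routine once one knows that $G-E(\mathfrak{h})$ is connected for each hamiltonian cycle $\mathfrak{h}$ of $G$ (as for the icosahedron and the other relevant small charonian graphs); in general the two boundary copies require a slightly more careful, case-based treatment.
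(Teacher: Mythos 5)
Your construction deviates from the paper's in a way that breaks the key step. The paper deletes only the edge $e=xy$ from each copy and adds a \emph{single} connecting edge $y_ix_{i+1}$ per junction; since the copies form a ring, any \emph{pair} of connecting edges is then a $2$-edge-cut, so every hamiltonian cycle of the glued graph must contain all connecting edges, its restriction to each copy is forced to be a hamiltonian $x_iy_i$-path, and charonicity of $G$ applies verbatim, copy by copy. You instead delete both $e$ and $e'$ and add \emph{two} inter-copy edges per junction. For $k\ge 3$ a single junction pair such as $\{v_jv_{j+1},w_ju_{j+1}\}$ is \emph{not} an edge cut of $\Gamma_k$ (one can go around the ring the other way), so your assertion that $\mathfrak{C}$ meets each gluing pair in $0$ or $2$ edges has no justification and is false in general. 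The actual parity constraint lives on the $4$-edge cuts isolating each copy: writing $a_j\in\{0,1,2\}$ for the number of edges of the $j$th junction pair used by $\mathfrak{C}$, one only gets that all $a_j$ have equal parity with $a_{j-1}+a_j>0$. This admits the case $a_j=1$ for all $j$, i.e.\ $\mathfrak{C}$ threads once around the ring, entering and leaving each copy exactly once; then $\mathfrak{C}\cap G_j$ is a hamiltonian path between two of $u_j,v_j,w_j$ which in general does \emph{not} close up via $e_j,e'_j$ to a hamiltonian cycle of $G$, so the charonian hypothesis on $G$ says nothing about it. Your case analysis (``all junctions crossed twice'' versus ``one junction skipped'') misses this case entirely, and even the case you do treat is resolved only by the unproven assertion that the skipped gluing edges ``supply a detour that prevents the removed edge from creating a bridge''---an assertion you yourself flag as needing ``a slightly more careful, case-based treatment'' and as resting on the extra, unestablished hypothesis that $G-E(\mathfrak{h})$ is connected for every hamiltonian cycle $\mathfrak{h}$. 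So the charonicity of $\Gamma_k$, which is the heart of the theorem, is not proved.

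Two smaller points. First, planarity of your gluing is asserted, not checked: the hypothesis does not guarantee that $e$ and $e'$ lie on a common face of an embedding of $G$, and your ``twisting'' closing edges $v_ku_1$, $w_kv_1$ reverse the order of the attachment vertices relative to the other junctions, which creates a crossing in the natural annular drawing unless the embedding is adjusted; this needs an argument. Second, the first half of your proof (building $h^k$ hamiltonian cycles from the $h$ hamiltonian $uw$-paths of $G-v$, colouring each with $1,2$, completing by the Four Colour Theorem, and recovering $\mathfrak{H}$ from the partition via two adjacent forced edges) is essentially the paper's argument and would be fine---but it, too, relies on $\Gamma_k-E(\mathfrak{H})$ being bridgeless, i.e.\ on the very charonicity that is missing. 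The simplest repair is to adopt the paper's gluing: remove only $e$, join the copies by single edges $y_ix_{i+1}$, and let the resulting $2$-edge-cuts do the work.
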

	
	\begin{proof}
		Assume \(e=xy\) and put \(H:=G-e\). For any integer \(k>0\) take \(k\) copies of \(H\), denoted by \(H_0, \dots, H_{k-1}\). Let the copy of \(x\), respectively \(y\), in \(H_i\) be \(x_i\), respectively \(y_i\). Connect \(y_i\) to \(x_{i+1}\) where the indices are taken modulo \(k\). Denote this graph by \(G_k\).
		
		We first show that \(G_k\) is charonian. The paths induced in the copies of \(H\) by any hamiltonian cycle of \(G\) containing \(xy\) together with the connecting edges between the copies form a hamiltonian cycle of \(G_k\). Let \(\mathfrak{h}\) be an arbitrary hamiltonian cycle in \(G_k\). Since any pair of connecting edges forms a 2-edge-cut, all connecting edges are contained in \(\mathfrak{h}\). The graph \(G_k-E(\mathfrak{h})\) is a disconnected graph. Let \(\mathfrak{p}_i\) be the hamiltonian \(x_iy_i\)-path of \(H_i\) induced by \(\mathfrak{h}\). The components of \(G_k-E(\mathfrak{h})\) corresponding to \(H_i\) are exactly the components of \(H_i-E(\mathfrak{p}_i)\). Since \(H_i + x_iy_i\) is charonian and \(\mathfrak{p}_i + x_iy_i\) is a hamiltonian cycle of this graph, \(H_i-E(\mathfrak{p}_i)\) is bridgeless.
		
		Denote the copy of \(e'\) in \(H_i\) by \(e'_i\). Since \(G\) has \(h\) hamiltonian cycles through \(e\) and \(e'\), \(H_i\) has \(h\) hamiltonian \(x_iy_i\)-paths through \(e'_i\). For any \(H_i\) we can independently choose any of these paths and combine them with the connecting edges. As a result we find that \(G_k\) has (at least) \(h^k\) hamiltonian cycles through the connecting edges and all edges \(e'_i\). Colour the connecting edges with the colour \(1\) and colour the edges \(e'_i\) with the colour \(2\). Let \(\mathfrak{h}\) be an arbitrary but fixed hamiltonian cycle through the connecting edges and all edges \(e'_i\). Extend this partial edge-colouring consistently on \(\mathfrak{h}\) with colours \(1\) and \(2\) (\(|E(\mathfrak{h}) \cap E(H_i)|\) is odd as \(G\) is quintic). Consider \(G'_k = G_k - E(\mathfrak{h})\). The graph \(G'_k\) is planar, cubic, and bridgeless. By the Four Colour Theorem, \(E(G'_k)\) can be partitioned into three 1-factors which are coloured \(3, 4, 5\), respectively. In this 1-factorisation of \(G_k\), at least one pair of 1-factors, namely the one formed by colours \(1\) and \(2\), is perfect. We have obtained \(h^k\) 1-factorisations of $G_k$, and these are indeed pairwise distinct since (i) no two hamiltonian cycles constructed above possess the same edge set, and (ii) the colours of the connecting edges and of the edges \(e'_i\) coincide for all these hamiltonian cycles.
	\end{proof}
	
	Our aim with the previous theorem is the following. Being charonian already implies that there exists at least one 1-factorisation containing a perfect pair. We also give the exact counts for charonian graphs on up to 30 vertices. Theorem~\ref{thm:2connected_charonian_fam} then proves that there are infinitely many and thus arbitrarily large charonian graphs, while also showing that they can even have a large number of 1-factorisations containing a perfect pair.
	
	The construction above yields for any charonian graph a specific family of planar quintic charonian graphs with a large number of \(1\)-factorisations containing at least one perfect pair. However, all members of this family have edge-connectivity~2. If we look at planar quintic charonian graphs with a higher vertex-connectivity we can prove the following results for any such graph.
	
	\begin{theorem}\label{thm:existence_non_perfect_1-factorisation}
		Let ${\cal G}_4$ (${\cal G}_5$) be the set of planar $4$-connected quintic charonian (planar $5$-connected quintic strongly charonian) graphs. Then there are constants $c > 0$ and $d > 0$ such that each graph $G \in {\cal G}_4$ has at least $c \cdot |V(G)|$ $1$-factorisations, each containing at least one perfect pair of $1$-factors, and each graph $G \in {\cal G}_5$ has at least $d \cdot |V(G)|$ $1$-factorisations, each containing at most nine perfect pairs of $1$-factors.
	\end{theorem}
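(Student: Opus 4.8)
\medskip
\noindent\textbf{Proof strategy.} The plan is to prove both assertions with one template. Write $n := |V(G)|$; since $G$ is quintic, $n$ is even. The key point is that if $\mathfrak{s}$ is a spanning $2$-regular subgraph of $G$ all of whose cycles are even, then $G - E(\mathfrak{s})$ is a planar cubic graph, hence the moment it is bridgeless it is $3$-edge-colourable by the Four Colour Theorem (in Tait's formulation); colouring those $3n/2$ edges with colours $3, 4, 5$ and colouring the cycles of $\mathfrak{s}$ alternately with colours $1, 2$ gives a $1$-factorisation of $G$ in which $E_1 \cup E_2 = E(\mathfrak{s})$. So the behaviour of the colour pair $\{1,2\}$ is forced by the shape of $\mathfrak{s}$: it is a perfect pair exactly when $\mathfrak{s}$ is a hamiltonian cycle, and it is not a perfect pair whenever $\mathfrak{s}$ is disconnected. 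Finally I would invoke the counting remark that a $1$-factorisation has only $\binom{5}{2} = 10$ colour pairs, so at most ten choices of $\mathfrak{s}$ can yield a fixed $1$-factorisation---the edge set $E(\mathfrak{s})$ must be one of its at most ten bichromatic $2$-factors---whence any linear supply of suitable subgraphs $\mathfrak{s}$ produces, after dividing by $10$, a linear number of distinct $1$-factorisations.

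For $\mathcal{G}_4$: a graph $G \in \mathcal{G}_4$ is planar and $4$-connected, so by the result of Brinkmann and the first author it has at least $c' n$ distinct hamiltonian cycles, for an absolute constant $c' > 0$. For each such $\mathfrak{h}$ (an even cycle, of length $n$), charonicity makes $G - E(\mathfrak{h})$ bridgeless, so the template yields a $1$-factorisation in which $\{1,2\}$, being $E(\mathfrak{h})$, is a perfect pair. Dividing by $10$ gives at least $c'n/10$ distinct such $1$-factorisations; take $c := c'/10$.

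For $\mathcal{G}_5$: a graph $G \in \mathcal{G}_5$ is in particular charonian, so the previous paragraph already provides $\ge cn$ $1$-factorisations each with a perfect pair; if $G$ is not perfectly hamiltonian, each of these has at most nine perfect pairs and we are done with $d := c$. Otherwise $G$ is perfectly hamiltonian, so $n \ge 20$ by Theorem~\ref{thm:orders}. Now I would apply the template with $\mathfrak{s} = Q \cup C$, where $Q$ is a $4$-cycle of $G$ and $C$ is an $(n-4)$-cycle of $G$ on $V(G) \setminus V(Q)$: both cycles are even, this is exactly the configuration for which \emph{strong} charonicity guarantees that $G - E(Q \cup C)$ is bridgeless, and now $E_1 \cup E_2 = E(Q \cup C)$ has two components, so $\{1,2\}$ is \emph{not} a perfect pair and the resulting $1$-factorisation has at most nine perfect pairs. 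Distinct cycles $C$ give distinct $2$-factors, so, dividing by $10$ once more, it remains to exhibit $\Omega(n)$ such $2$-factors.

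The hard part is precisely this last step. My plan is to fix one carefully chosen $4$-cycle $Q$ and vary $C$ over hamiltonian cycles of $G - V(Q)$. Since $n \ge 20 > 12$ and the icosahedron is the only quintic planar triangulation, $G$ has either a quadrilateral face or (by a short Euler-formula count) two triangular faces sharing an edge, and in either case one extracts a $4$-cycle $Q$. I would then arrange $Q$ so that $G - V(Q)$ is again $4$-connected and planar, whence $G - V(Q)$ is hamiltonian by Tutte's theorem and has at least $c''(n-4)$ distinct hamiltonian cycles by the Brinkmann--Van Cleemput bound, each available as $C$; taking $d := \min\{c, c''/20\}$ (the second term absorbing the passage from $n - 4$ to $n$, valid since $n \ge 20$) completes the proof. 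The genuine obstacle I foresee is showing that $Q$ can be chosen so that deleting $V(Q)$ does not lower the connectivity of $G$ below $4$: a $7$-element cut of $G$ containing $V(Q)$ would have to enclose $Q$ in the plane, and I would rule this out from the $5$-connectivity of $G$ together with the location of $Q$ in the embedding, handling directly the bounded number of degenerate configurations, if any, in which no suitable $4$-cycle exists.
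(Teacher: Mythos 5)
Your treatment of ${\cal G}_4$ is correct and is essentially the paper's argument: the Brinkmann--Van Cleemput linear lower bound on hamiltonian cycles, charonicity to make $G-E(\mathfrak{h})$ bridgeless, the Four Colour Theorem in Tait's form, and division by a constant to pass from $2$-factors to distinct $1$-factorisations (you divide by $10$ because a $1$-factorisation has only ten bichromatic $2$-factors; the paper instead pigeonholes on the pair of edges a hamiltonian cycle uses at a fixed vertex --- both are valid). Your reduction of the ${\cal G}_5$ statement to the ${\cal G}_4$ statement when $G$ is not perfectly hamiltonian is a correct shortcut that the paper does not use.

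The gap is in the perfectly hamiltonian subcase of ${\cal G}_5$. You fix a \emph{single} $4$-cycle $Q$ and try to produce linearly many $(n-4)$-cycles on $V(G)\setminus V(Q)$, which forces you to demand that $G-V(Q)$ be $4$-connected so that Tutte and Brinkmann--Van Cleemput apply. You flag this as the obstacle but do not resolve it, and it is not a matter of finitely many degenerate configurations: $G-V(Q)$ is $4$-connected precisely when $G$ has no vertex cut of size at most $7$ containing $V(Q)$, and a $5$-connected planar quintic graph has a profusion of cuts of sizes $5$, $6$ and $7$ (neighbourhoods of vertices and of adjacent pairs, for instance), any of which may swallow the four vertices of every candidate $4$-cycle; no argument is given that a good $Q$ exists, and none is apparent. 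The paper avoids this entirely by reversing the quantifiers: it shows via Euler's formula that a planar quintic graph contains $\Theta(n)$ diamonds, and for \emph{each} diamond it needs only \emph{one} $(n-4)$-cycle on the complementary vertex set, supplied by a Sanders-type hamiltonicity theorem applied to the $4$-connected planar graph $G-v$; linearity then comes from the linear number of diamonds, and distinctness from the fact that distinct diamonds yield distinct $4$-cycles in the resulting $2$-factor. To repair your proof you would either have to establish the existence of a $4$-cycle $Q$ with $G-V(Q)$ $4$-connected (which I doubt holds in general) or switch to varying the short cycle over the linearly many diamonds while keeping a single long cycle per choice, which is the paper's route.
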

	
	\begin{proof}
		Let $G \in {\cal G}_4$ have order $n$. By a recent theorem of Brinkmann and Van Cleemput~\cite{BV}, there exists a constant $c' > 0$ such that $G$ contains $c'n$ pairwise distinct hamiltonian cycles (i.e.\ on different edge sets). Thus, there is a constant $c'' > 0$ such that for an arbitrary vertex $v$ in $G$, there exist distinct edges incident with $v$ which are traversed by $c''n$ pairwise distinct hamiltonian cycles. We proceed as in the proof of Theorem~\ref{thm:2connected_charonian_fam} and obtain that there exists a constant $c \ge c'' > 0$ such that $G$ has $cn$ 1-factorisations, each containing at least one perfect pair of $1$-factors.
		
		We now prove the second statement. In a graph, we call two triangles sharing exactly one edge a \emph{diamond}.
		
		\smallskip
		
		\noindent \textbf{Claim.} \emph{The number of diamonds in an $n$-vertex planar quintic graph is $\Theta(n)$.}
		
		\smallskip
		
		\noindent \emph{Proof of the Claim.} Let $G$ be a plane quintic graph with $n$ vertices, $m$ edges, and $f_k$ $k$-faces. We first show that $f_3 \ge n + 8$. By Euler's formula $f_3 = 2 - n + m - \sum_{k \ge 4} f_k$, which implies, as $2m = 5n$ and $2m = \sum_{k \ge 3} k f_k$, that $$4f_3 = 8 - 4n + 2m + 2m - \sum_{k \ge 4} 4 f_k = 8 + n + \sum_{k \ge 3} k f_k - \sum_{k \ge 4} 4 f_k = 8 + n + 3f_3 + \sum_{k \ge 4} (k - 4) f_k,$$ whence $f_3 = 8 + n + \sum_{k \ge 4} (k - 4) f_k \ge n + 8$. The number of diamonds in $G$ is at least half of $s := 3f_3 - \sum_{k\geq4}kf_k$. We have $2m = \sum_{k\geq3}kf_k$, $f_3 \ge n + 8$ as we have just shown, and that $G$ is quintic, so
		$$s = 6f_3 - 2m \ge 6n + 48 - 2m = n + 48,$$
		which completes the Claim's proof since $f_3$ is at most linear in $n$ by Euler's formula.
		
		\smallskip
		
		Let $G$ be a planar $5$-connected quintic strongly charonian graph $G$ of order $n$. Consider in $G$ a diamond $D$ made of the two triangles $\Delta, \Delta'$ sharing an edge $e$. Consider $v \in V(\Delta) \setminus V(\Delta')$. Then $G_v := G - v$ is a planar 4-connected graph. By a theorem of Sanders~\cite{Sa96}, $G_v - \Delta'$ contains a hamiltonian cycle $\mathfrak{h}$ (the length of $\mathfrak{h}$ is even as the order of $G$ is even). Edge-colour with colours $1$ and $2$ the cycle $\mathfrak{h}$ as well as the 4-cycle formed by the edges $(E(\Delta) \cup E(\Delta')) \setminus \{e\}$. As before, removing from $G$ the edges of the 2-factor composed of these two cycles yields a planar cubic graph $H$. Since $H$ is bridgeless, by the Four Colour Theorem $H$ can be edge-coloured with three colours, whence, $G$ has a 1-factorisation with at most nine perfect pairs, as $E_1 \cup E_2$ is a non-hamiltonian 2-factor. By the Claim, we have a linear number of choices for $D$. Since the above procedure yields a 1-2-coloured 4-cycle and a 1-2-coloured $(n-4)$-cycle, at least for every eleventh diamond, a different colouring is obtained. From this the statement follows.
	\end{proof}
	
	The icosahedron is the only planar \(5\)-connected quintic (strongly) charonian graph that is known. It remains open whether other such graphs exist, but we note that marriage does not preserve the property of being (strongly) charonian; nor does performing the marriage at the \(8\)-valent vertex obtained by contracting an edge.
	
	The arguments above would work even if one would slightly relax the definition of ``charonian'' by allowing the removal of some hamiltonian cycles to produce graphs containing a bridge, as long as the number of such cycles is bounded by some constant, or perhaps even a suitable sublinear function. However, no such relaxation proved practical, so we opted for the most straightforward definition.
	
	\bigskip

	We conclude the paper with remarks on so-called Kempe equivalence classes. The prefix ``Kempe-'' typically refers to switching the colours of vertices in a bichromatic cycle, as this is what Kempe did in~\cite{Ke79}, while the prefix ``edge-Kempe-'' is used for switching the colours of edges in a bichromatic cycle. But since in this paper we only concern ourselves with the latter, we shall henceforth abbreviate ``edge-Kempe-'' to ``Kempe-''. Let $G$ be an edge-coloured graph containing a bichromatic cycle $C$. If we \emph{Kempe switch} $C$, this means permuting the two colours of $E(C)$, while all other edges of $G$ retain their colours. (This operation is also called a ``Kempe change'' or an ``interchange''.) If two 1-factorisations $F$ and $F'$ of $G$ can be obtained from one another through a sequence of Kempe switches, we call them \emph{Kempe equivalent} and write $F \sim F'$, noting that $\sim$ is an equivalence relation.
	
	Kempe equivalence classes have attracted significant attention. For edge-coloured graphs, we refer to~\cite{Mo06} and~\cite{bH14} for more details. One fundamental problem is to count these classes. We begin with the observation that we \emph{cannot} Kempe switch our way up to a perfect 1-factorisation, the proof of which now follows; note that a perfect 1-factorisation is only Kempe equivalent to itself.

	\begin{corollary}\label{cor:at_least_2_kempe_classes}
		Let $G$ be a regular graph and $F$ a non-perfect $1$-factorisation of $G$. Then $G$ admits no perfect $1$-factorisation $F' \sim F$. Thus, every planar quintic perfectly hamiltonian graph $G$ on $n$ vertices containing a $2$-factor $\mathfrak{f}$ composed of a $4$-cycle and an $(n-4)$-cycle such that $G - E(\mathfrak{f})$ is bridgeless, has at least two Kempe equivalence classes. In particular, planar $5$-connected quintic strongly charonian graphs that are perfectly hamiltonian have at least two Kempe equivalence classes.
	\end{corollary}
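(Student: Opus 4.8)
The plan is to prove the general first assertion by a \emph{rigidity} argument, and then read off the two consequences by producing a single non-perfect $1$-factorisation.

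For the first assertion I would show that a perfect $1$-factorisation is Kempe equivalent only to itself. Let $G$ be $d$-regular with colour classes $E_1,\dots,E_d$ forming a perfect $1$-factorisation. For any two colours $i\neq j$ the $2$-factor $E_i\cup E_j$ is, by hypothesis, a single Hamiltonian cycle $C$ (of even length), on which $i$ and $j$ alternate; thus $C$ is the only bichromatic cycle in colours $i,j$, so Kempe switching it exchanges $i$ and $j$ on \emph{all} of $E_i\cup E_j$ and leaves every other edge untouched. That is precisely the global transposition of the colours $i$ and $j$, so the resulting $1$-factorisation is the same partition of $E(G)$. Hence any $1$-factorisation obtained from a perfect one by one Kempe switch equals it as a partition (and so is again perfect), and inducting along a switching sequence shows that the Kempe equivalence class of a perfect $1$-factorisation $F'$ is $\{F'\}$. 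Therefore, if $F$ is non-perfect and $F'\sim F$ with $F'$ perfect, then $F$ lies in $\{F'\}$, forcing $F=F'$ to be perfect --- a contradiction. So $G$ admits no perfect $F'\sim F$.

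For the second assertion, let $G$ be a planar quintic perfectly hamiltonian graph on $n$ vertices with a $2$-factor $\mathfrak f=C_4\cup C_{n-4}$ such that $G-E(\mathfrak f)$ is bridgeless. Since $G$ is perfectly hamiltonian it has a $2$-coloured Hamiltonian cycle, so $n$ is even; then both $C_4$ and $C_{n-4}$ have even length, so $\mathfrak f$ carries a proper $2$-edge-colouring with colours $1,2$. Now $G-E(\mathfrak f)$ is planar, cubic and bridgeless, hence $3$-edge-colourable by the Four Colour Theorem; colouring it with $3,4,5$ completes a $1$-factorisation $F$ of $G$ in which $E_1\cup E_2=E(\mathfrak f)$ is a disconnected $2$-factor, so the pair $(1,2)$ is not perfect and $F$ is non-perfect. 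Since $G$ also admits a perfect $1$-factorisation $F_0$, the first part gives $F\not\sim F_0$, whence $G$ has at least two Kempe equivalence classes.

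The concluding ``in particular'' then follows because a planar $5$-connected quintic strongly charonian graph $G$ has, by Theorem~\ref{thm:existence_non_perfect_1-factorisation}, a $1$-factorisation with at most nine perfect pairs --- that is, a non-perfect one --- so when such a $G$ is additionally perfectly hamiltonian the first part again yields at least two Kempe equivalence classes. (Alternatively, the construction in the proof of Theorem~\ref{thm:existence_non_perfect_1-factorisation} exhibits directly a $2$-factor $C_4\cup C_{n-4}$ with bridgeless complement, so the second assertion applies verbatim.) The only delicate point is the rigidity lemma of the second paragraph --- namely that a Kempe switch performed on a spanning bichromatic cycle reproduces the same $1$-factorisation --- and this is immediate once one notes that such a cycle contains every edge of both of its colours; everything else is either routine or quoted from earlier results.
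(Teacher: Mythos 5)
Your proposal is correct and follows essentially the same route as the paper: the rigidity observation that Kempe switching a bichromatic hamiltonian cycle merely transposes its two colour classes (so a perfect $1$-factorisation is Kempe equivalent only to itself), the construction of a non-perfect $1$-factorisation by $2$-colouring $\mathfrak{f}$ and $3$-edge-colouring the bridgeless planar cubic graph $G-E(\mathfrak{f})$ via the Four Colour Theorem, and the appeal to Theorem~\ref{thm:existence_non_perfect_1-factorisation} for the strongly charonian case. The only (harmless) difference is presentational: you argue from the perfect side that its Kempe class is a singleton, whereas the paper's proof phrases the same idea from the non-perfect side, after stating your singleton observation as a remark.
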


	\begin{proof}
		If we Kempe switch a hamiltonian cycle the number of perfect pairs remains unchanged: we are simply permuting the two colours of the edges of which the hamiltonian cycle is made. As $F$ is non-perfect there exist colours $1$ and $2$ such that $E_1 \cup E_2$ is a non-hamiltonian 2-factor. Kempe switching a (necessarily non-hamiltonian) 1-2-coloured cycle cannot render $E_1 \cup E_2$ hamiltonian.
		
		By Theorem~\ref{thm:existence_non_perfect_1-factorisation} a planar $5$-connected quintic strongly charonian graph $G$ has a 1-factorisation containing at most nine perfect pairs. Therefore, if $G$ is perfectly hamiltonian, it must have at least two Kempe equivalence classes.
	\end{proof}
	
	Theorem~\ref{thm:infinite_any_genus} implies that there exist infinitely many planar $5$-connected quintic graphs that have at least two Kempe equivalence classes. In fact, we believe the following to be true.
	
	\begin{conjecture}
		Every quintic graph contains at least two Kempe equivalence classes.
	\end{conjecture}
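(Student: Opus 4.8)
The plan is to show that the $1$-factorisations of $G$ never form a single Kempe equivalence class; since a quintic graph with chromatic index $6$ has no $1$-factorisation at all, we tacitly restrict to $5$-edge-colourable $G$. The starting point is the mechanism behind Corollary~\ref{cor:at_least_2_kempe_classes}: a perfect $1$-factorisation $F'$ constitutes a Kempe class by itself. In $F'$ every bichromatic subgraph is a single Hamilton cycle, so each Kempe switch available at $F'$ merely transposes two colours globally; and conversely, if a distinct $F''$ switched to $F'$, the switched bichromatic cycle would be a proper subcycle of some $E_i\cup E_j$ of $F''$, so $(i,j)$ would be a non-perfect pair in $F''$ whose corresponding $2$-factor, unchanged as an edge set by the switch, stays non-hamiltonian — contradicting that $F'$ is perfect. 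Iterating this backwards, nothing distinct from $F'$ is Kempe equivalent to it. The argument therefore splits according to whether $G$ is perfectly hamiltonian.

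If $G$ is perfectly hamiltonian, fix a perfect $1$-factorisation $F'$. Since its Kempe class is $\{F'\}$, it suffices to exhibit one further $1$-factorisation of $G$ (up to permutation of colours): by the above it automatically lies in a second class, whether it is perfect or not. In this case the statement thus reduces to the claim that \emph{no quintic graph is uniquely $5$-edge-colourable}. I would attack this by observing that if $G$ has colour classes $E_1,\dots,E_5$ and admits no second $1$-factorisation, then each $E_i\cup E_j$ is a single Hamilton cycle (else flipping one of its components yields a new colouring), so each of the $\binom{5}{3}=10$ cubic subgraphs $G[E_i\cup E_j\cup E_\ell]$ is a $3$-edge-colourable cubic graph which — by the same reasoning applied within $G$ — is itself uniquely $3$-edge-colourable; one then seeks a contradiction from the heavy mutual overlap of these ten subgraphs together with the rigidity of uniquely $3$-edge-colourable cubic graphs (for instance, each has only three Hamilton cycles). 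For planar $G$ there is a cheaper route: by the Four Colour Theorem, whenever $\mathfrak h$ is a Hamilton cycle with $G-E(\mathfrak h)$ bridgeless, $G-E(\mathfrak h)$ is $3$-edge-colourable and hence gives a $1$-factorisation of $G$, so the linear lower bound on Hamilton cycles of Brinkmann and Van Cleemput~\cite{BV} (in the relevant connectivity range) produces far more than ten $1$-factorisations. Establishing the non-existence of uniquely $5$-edge-colourable quintic graphs in full generality is one of the two points requiring real work, although it is widely believed.

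If $G$ is not perfectly hamiltonian, every $1$-factorisation has a non-perfect pair, the singleton-class trick is unavailable, and a short ``cycle-flip'' computation shows that even the number of perfect pairs is not a Kempe invariant. Here the task is to find a finer invariant of Kempe classes. One candidate is the system $\mathcal H(F)$ of Hamilton cycles realised as bichromatic $2$-factors of $F$: a Kempe switch of a Hamilton cycle fixes $\mathcal H(F)$, whereas a switch of a non-hamiltonian bichromatic cycle $C$ can alter only those members of $\mathcal H(F)$ sharing an edge with $C$, so $\mathcal H(F)$ evolves locally; one would try to distil from the intersection pattern of the Hamilton cycles of $G$ a quantity that is stable under such local moves and provably takes at least two values. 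A second, more concrete route is to imitate the edge-cut bookkeeping of Lemma~\ref{lem:parity_edge_cut}: fix a small separating structure — a triangle, or a $4$-edge-cut — and track a Kempe-stable feature of the colour pattern it carries, exhibiting two $1$-factorisations that realise incompatible patterns. I expect this case to be the real bottleneck — presumably the reason the statement is only conjectured — and a complete proof will likely have to engage the global structure of the Hamilton-cycle space of quintic graphs rather than rely on purely local colour counting.
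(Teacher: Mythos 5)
This statement is not proven in the paper at all: it is stated as a \emph{conjecture}, and the only information the authors offer is the (computer-assisted) remark that a smallest planar counterexample would be cyclically 6-edge-connected and have at least 26 vertices. So there is no proof to compare yours against, and your proposal --- by your own admission --- does not close the gap either. What you do get right is the mechanism surrounding Corollary~\ref{cor:at_least_2_kempe_classes}: your ``last switch'' argument correctly shows that a perfect $1$-factorisation forms a singleton Kempe class (any bichromatic cycle in it is a hamiltonian cycle, and switching that returns the same partition; and a distinct predecessor would have to switch a proper subcycle of a $2$-factor that is hamiltonian in the target, which is impossible). This matches the paper's parenthetical remark and its use in the corollary. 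Moreover, your first case can in fact be finished without the detours you sketch: by Thomason's classical theorem that for $k \ge 4$ the only uniquely $k$-edge-colourable graphs are stars $K_{1,k}$, no $5$-regular graph is uniquely $5$-edge-colourable, so a perfectly hamiltonian quintic graph always has a second $1$-factorisation and hence a second class. That case is not the bottleneck.

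The genuine gap is the second case, and it is the whole content of the conjecture. ``Most'' quintic graphs are not perfectly hamiltonian --- the paper itself (Theorem~\ref{thm:infinite_zero_pairs}) constructs infinitely many planar $5$-connected quintic graphs in which \emph{every} $1$-factorisation has zero perfect pairs, so the singleton-class trick is unavailable on an infinite family, not just in principle. For these graphs you offer only candidate invariants (the system of bichromatic hamiltonian cycles, colour patterns on small edge-cuts) without establishing that any of them is preserved by Kempe switches and provably attains two distinct values; you correctly note that the number of perfect pairs is \emph{not} invariant (a switch in colours $1,2$ changes $E_1$, hence can change the status of $(1,3)$, etc.), but you do not replace it with anything. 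As it stands, the proposal reduces an open conjecture to an equally open problem, so it cannot be accepted as a proof --- which is consistent with the authors leaving the statement as a conjecture.
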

	
	A smallest planar counterexample would be cyclically 6-edge-connected and have at least 26 vertices.
	
	\bigskip
	
	We now enumerate perfect pairs and Kempe equivalence classes in the 1-skeleta of the Platonic solids. These results have, partially, already appeared in the literature. In the statements below, all counts are exact (and not lower bounds).

	\begin{proposition}\label{obs:platonic}
		The unique $1$-factorisation of the tetrahedron contains only perfect pairs; the octahedron has two $1$-factorisations which are each others mirror image and contain only perfect pairs; the cube has four $1$-factorisations of which three have two perfect pairs and one has zero perfect pairs; all ten $1$-factorisations of the dodecahedron have only perfect pairs; and there exist $1$-factorisations of the icosahedron with $k$ perfect pairs iff $k \in \{ 0, 2, \ldots, 8 \}$. Furthermore, the tetrahedron and the cube have one Kempe equivalence class, the octahedron and the icosahedron have two Kempe equivalence classes, and the dodecahedron has ten Kempe equivalence classes.
	\end{proposition}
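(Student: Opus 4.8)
The plan is to reduce the proposition to a finite amount of case-checking, since for a regular graph the $1$-factorisations are precisely the proper edge-colourings taken up to a permutation of the colour set: so this is a statement about the proper $3$-edge-colourings of the tetrahedron, cube and dodecahedron, the proper $4$-edge-colourings of the octahedron, and the proper $5$-edge-colourings of the icosahedron, where a pair of colours is perfect exactly when the union of the two colour classes is a single hamiltonian cycle rather than a disjoint union of two or more (necessarily even) cycles. The three smallest solids I would handle by hand, and the dodecahedron and icosahedron with the branch-and-bound enumeration of proper edge-colourings described at the beginning of Section~\ref{sec:kotzig}.

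For the tetrahedron $K_4$: it is uniquely $3$-edge-colourable and any two colour classes form a hamiltonian $4$-cycle, so its sole $1$-factorisation is perfect. For the octahedron $K_{2,2,2}$: a short check shows it has exactly two $1$-factorisations, that they are each other's mirror image, and that in each one every pair of colour classes is a hamiltonian $6$-cycle. For the cube: listing its four $1$-factorisations, the one given by the three ``parallel'' perfect matchings has all three bichromatic $2$-factors equal to a union of two $4$-cycles (hence zero perfect pairs), while the other three each have exactly two bichromatic $2$-factors that are hamiltonian $8$-cycles. For the dodecahedron: the enumeration returns ten $1$-factorisations, and one verifies directly that in each of them all three bichromatic $2$-factors are hamiltonian $20$-cycles. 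For the icosahedron: running the enumeration and recording, for every colouring, how many of the $\binom{5}{2}=10$ bichromatic $2$-factors are hamiltonian, one obtains exactly the values $0,2,3,4,5,6,7,8$, the values $1$, $9$ and $10$ never being attained.

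It remains to count Kempe equivalence classes, and here the crucial observation---the same one underlying Corollary~\ref{cor:at_least_2_kempe_classes}---is that in a $d$-regular $d$-edge-coloured graph every bichromatic cycle is a whole connected component of the corresponding bichromatic $2$-factor; hence a \emph{perfect} $1$-factorisation admits only trivial Kempe switches, each merely swapping the two colours along a hamiltonian cycle and so leaving the induced partition fixed, and is therefore its own Kempe class. This immediately gives one class for the tetrahedron, two for the octahedron and ten for the dodecahedron, because in each of these cases the $1$-factorisations found above are all of them and all are perfect. For the cube I would exhibit a concrete Kempe switch on one of the two $4$-cycles of a bichromatic $2$-factor of the parallel-classes factorisation that produces each of the other three $1$-factorisations, showing all four to lie in a single class. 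For the icosahedron I would form the graph whose vertices are its proper $5$-edge-colourings up to colour permutation and whose edges record the non-trivial Kempe switches, and compute that it has exactly two connected components.

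The main obstacle is the icosahedron: it has many $5$-edge-colourings and I see no structural reason excluding $k\in\{1,9,10\}$---in particular no transparent proof that it is not perfectly hamiltonian---nor any a priori reason that its Kempe-switch graph has exactly two components, so establishing these \emph{exact} values, as opposed to the trivial bounds, genuinely rests on the computer enumeration together with an explicit traversal of the Kempe-switch graph. The cube's single Kempe class, though elementary, must likewise be certified by producing the switches rather than merely claiming connectivity.
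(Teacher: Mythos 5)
Your proposal is correct, and for most of the statement it follows the same route as the paper: the tetrahedron, cube and octahedron are settled by hand, the icosahedron counts (both the attained values $0,2,\ldots,8$ and the exclusion of $1,9,10$) rest on the exhaustive edge-colouring enumeration from Section~\ref{sec:kotzig}, and the Kempe-class counts for the icosahedron (and, in your version, the cube) rest on an explicit traversal of the Kempe-switch graph, exactly as the paper does with its union-find computation. Your observation that a perfect $1$-factorisation admits only trivial Kempe switches and is therefore a singleton class is precisely the remark preceding Corollary~\ref{cor:at_least_2_kempe_classes}, and using it to read off one, two and ten classes for the tetrahedron, octahedron and dodecahedron is cleaner than the paper's blanket computational determination (the paper itself notes this consequence after Corollary~\ref{cor:at_least_2_kempe_classes}). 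The one genuine methodological difference is the dodecahedron: where you verify all ten enumerated $1$-factorisations by inspection, the paper gives a computer-free structural argument that \emph{no} $1$-factorisation can have a non-perfect pair: a non-hamiltonian bichromatic $2$-factor would consist of exactly two even cycles (the shortest admissible cycle having length~$8$), and a special case of Grinberg's criterion from~\cite{BZ19} forces $3(p'-p)=4$ for the pentagon counts inside and outside such a $2$-factor, a contradiction. That argument buys independence from the enumeration for this case and explains \emph{why} the dodecahedron behaves this way, whereas your check is a finite verification; both are valid proofs of the claim.
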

	
	\begin{proof}
		The cases of the tetrahedron, cube, and octahedron are left to the reader. For the dodecahedron $D$, every 1-factorisation contains three perfect pairs. Assume $D$ contains a non-hamiltonian 2-factor $F$ composed of cycles of even length. In $D$, the shortest such cycle has length~8, so $F$ has exactly two components. We consider $D$ to be embedded in the plane and denote by $p$ ($p'$) the number of pentagons inside (outside) of $F$. A special case of Corollary~3.6 from \cite{BZ19} implies that $3(p' - p) = 4$, a contradiction since $p - p'$ is an integer. For the icosahedron, the eight relevant $1$-factorisations are given in Figure~\ref{fig:icosahedron} in the Appendix---that indeed no 1-factorisation exists with 1, 9, or 10 perfect pairs was established with the computer program described at the beginning of Section~\ref{sec:kotzig}.
		
		The number of Kempe equivalence classes for each of these graphs was determined with a computer program which exhaustively performs all possible Kempe switches on each 1-factorisation and uses a union-find data structure to obtain the classes.
	\end{proof}
	
	By the same argumentation as given for Corollary~\ref{cor:at_least_2_kempe_classes}, if a $d$-regular $d$-edge-colourable graph admits exactly $k$ perfect and $\ell$ non-perfect 1-factorisations, then it has at least $k + 1$ Kempe equivalence classes if $\ell > 0$ and exactly $k$ Kempe equivalence classes if $\ell = 0$ (as is the case for the octahedron, the dodecahedron, and $K_6$).

	\vspace{5mm}
	
	\noindent \textbf{Acknowledgement.} Zamfirescu's research is supported by a Postdoctoral Fellowship of the Research Foundation Flanders (FWO).

	\clearpage
	
	\section*{Appendix}
	
	\begin{figure}[h]
		\begin{center}
			\newenvironment{smallestquinticperfham}[1]{
				\begin{tikzpicture}[very thick,scale=.5]
					
					\node[rectangle,draw=none,font=\normalsize] at (0,-4) {#1 perfect pair\ifnum#1=1\else s\fi};
					
					\begin{scope}[no text]
						
						\node (0) at (3,0) {1};
						\node (1) at (0,3) {2};
						\node (2) at (0,-3) {3};
						\node (8) at (-3,0) {9};
						
						\node (4) at (1.5,0) {5};
						\node (6) at (0,1.5) {7};
						\node (10) at (0,-1.5) {11};
						\node (11) at (.5,-.75) {12};
						\node (12) at (.5,0) {13};
						\node (13) at (.5,.75) {14};
						\node (14) at (-.5,.9) {15};
						\node (15) at (-1.5,.4) {16};
						\node (16) at (-1.5,-.4) {17};
						\node (17) at (-.5,-.9) {18};
						\node (18) at (-.5,-.3) {19};
						\node (19) at (-.5,.3) {20};
						
						\node (5) at ($.5*(0)+.5*(1)$) {6};
						\node (3) at ($.5*(0)+.5*(2)$) {4};
						\node (7) at ($.5*(1)+.5*(8)$) {8};
						\node (9) at ($.5*(2)+.5*(8)$) {10};
					\end{scope}
				}{\end{tikzpicture}}

			\begin{smallestquinticperfham}{0}
				\draw[colour0] (0) edge[bend right, colour0] (1);
				\draw[colour0] (2) edge[bend left, colour0] (8);
				\draw[colour0] (3) -- (10);
				\draw[colour0] (4) -- (11);
				\draw[colour0] (5) -- (13);
				\draw[colour0] (6) -- (14);
				\draw[colour0] (7) -- (15);
				\draw[colour0] (9) -- (16);
				\draw[colour0] (12) -- (19);
				\draw[colour0] (17) -- (18);
				
				\draw[colour1] (0) edge[bend left, colour1] (2);
				\draw[colour1] (1) -- (5);
				\draw[colour1] (3) -- (4);
				\draw[colour1] (6) -- (13);
				\draw[colour1] (7) -- (14);
				\draw[colour1] (8) -- (9);
				\draw[colour1] (10) -- (11);
				\draw[colour1] (12) -- (18);
				\draw[colour1] (15) -- (19);
				\draw[colour1] (16) -- (17);
				
				\draw[colour2] (0) -- (3);
				\draw[colour2b] (0) -- (3);
				\draw[colour2] (1) -- (6);
				\draw[colour2b] (1) -- (6);
				\draw[colour2] (2) -- (9);
				\draw[colour2b] (2) -- (9);
				\draw[colour2] (4) -- (5);
				\draw[colour2b] (4) -- (5);
				\draw[colour2] (7) -- (8);
				\draw[colour2b] (7) -- (8);
				\draw[colour2] (10) -- (17);
				\draw[colour2b] (10) -- (17);
				\draw[colour2] (11) -- (12);
				\draw[colour2b] (11) -- (12);
				\draw[colour2] (13) -- (19);
				\draw[colour2b] (13) -- (19);
				\draw[colour2] (14) -- (15);
				\draw[colour2b] (14) -- (15);
				\draw[colour2] (16) -- (18);
				\draw[colour2b] (16) -- (18);
				
				\draw[colour3] (0) -- (4);
				\draw[colour3] (1) -- (7);
				\draw[colour3] (2) -- (10);
				\draw[colour3] (3) -- (11);
				\draw[colour3] (5) -- (6);
				\draw[colour3] (8) -- (16);
				\draw[colour3] (9) -- (17);
				\draw[colour3] (12) -- (13);
				\draw[colour3] (14) -- (19);
				\draw[colour3] (15) -- (18);
				
				\draw[colour4] (0) -- (5);
				\draw[colour4b] (0) -- (5);
				\draw[colour4] (1) edge[bend right, colour4] (8);
				\draw[colour4b] (1) edge[bend right, colour4b] (8);
				\draw[colour4] (2) -- (3);
				\draw[colour4b] (2) -- (3);
				\draw[colour4] (4) -- (12);
				\draw[colour4b] (4) -- (12);
				\draw[colour4] (6) -- (7);
				\draw[colour4b] (6) -- (7);
				\draw[colour4] (9) -- (10);
				\draw[colour4b] (9) -- (10);
				\draw[colour4] (11) -- (17);
				\draw[colour4b] (11) -- (17);
				\draw[colour4] (13) -- (14);
				\draw[colour4b] (13) -- (14);
				\draw[colour4] (15) -- (16);
				\draw[colour4b] (15) -- (16);
				\draw[colour4] (18) -- (19);
				\draw[colour4b] (18) -- (19);
				
			\end{smallestquinticperfham}
			\begin{smallestquinticperfham}{1}
				\draw[colour0] (0) edge[bend right, colour0] (1);
				\draw[colour0] (2) edge[bend left, colour0] (8);
				\draw[colour0] (3) -- (10);
				\draw[colour0] (4) -- (11);
				\draw[colour0] (5) -- (13);
				\draw[colour0] (6) -- (14);
				\draw[colour0] (7) -- (15);
				\draw[colour0] (9) -- (16);
				\draw[colour0] (12) -- (19);
				\draw[colour0] (17) -- (18);
				
				\draw[colour1] (0) edge[bend left, colour1] (2);
				\draw[colour1] (1) -- (5);
				\draw[colour1] (3) -- (4);
				\draw[colour1] (6) -- (13);
				\draw[colour1] (7) -- (14);
				\draw[colour1] (8) -- (9);
				\draw[colour1] (10) -- (17);
				\draw[colour1] (11) -- (12);
				\draw[colour1] (15) -- (16);
				\draw[colour1] (18) -- (19);
				
				\draw[colour2] (0) -- (3);
				\draw[colour2b] (0) -- (3);
				\draw[colour2] (1) -- (6);
				\draw[colour2b] (1) -- (6);
				\draw[colour2] (2) -- (9);
				\draw[colour2b] (2) -- (9);
				\draw[colour2] (4) -- (5);
				\draw[colour2b] (4) -- (5);
				\draw[colour2] (7) -- (8);
				\draw[colour2b] (7) -- (8);
				\draw[colour2] (10) -- (11);
				\draw[colour2b] (10) -- (11);
				\draw[colour2] (12) -- (18);
				\draw[colour2b] (12) -- (18);
				\draw[colour2] (13) -- (14);
				\draw[colour2b] (13) -- (14);
				\draw[colour2] (15) -- (19);
				\draw[colour2b] (15) -- (19);
				\draw[colour2] (16) -- (17);
				\draw[colour2b] (16) -- (17);
				
				\draw[colour3] (0) -- (4);
				\draw[colour3] (1) -- (7);
				\draw[colour3] (2) -- (10);
				\draw[colour3] (3) -- (11);
				\draw[colour3] (5) -- (6);
				\draw[colour3] (8) -- (16);
				\draw[colour3] (9) -- (17);
				\draw[colour3] (12) -- (13);
				\draw[colour3] (14) -- (19);
				\draw[colour3] (15) -- (18);
				
				\draw[colour4] (0) -- (5);
				\draw[colour4b] (0) -- (5);
				\draw[colour4] (1) edge[bend right, colour4] (8);
				\draw[colour4b] (1) edge[bend right, colour4b] (8);
				\draw[colour4] (2) -- (3);
				\draw[colour4b] (2) -- (3);
				\draw[colour4] (4) -- (12);
				\draw[colour4b] (4) -- (12);
				\draw[colour4] (6) -- (7);
				\draw[colour4b] (6) -- (7);
				\draw[colour4] (9) -- (10);
				\draw[colour4b] (9) -- (10);
				\draw[colour4] (11) -- (17);
				\draw[colour4b] (11) -- (17);
				\draw[colour4] (13) -- (19);
				\draw[colour4b] (13) -- (19);
				\draw[colour4] (14) -- (15);
				\draw[colour4b] (14) -- (15);
				\draw[colour4] (16) -- (18);
				\draw[colour4b] (16) -- (18);
				
			\end{smallestquinticperfham}
			\begin{smallestquinticperfham}{2}
				\draw[colour0] (0) edge[bend right, colour0] (1);
				\draw[colour0] (2) edge[bend left, colour0] (8);
				\draw[colour0] (3) -- (10);
				\draw[colour0] (4) -- (11);
				\draw[colour0] (5) -- (13);
				\draw[colour0] (6) -- (7);
				\draw[colour0] (9) -- (16);
				\draw[colour0] (12) -- (19);
				\draw[colour0] (14) -- (15);
				\draw[colour0] (17) -- (18);
				
				\draw[colour1] (0) edge[bend left, colour1] (2);
				\draw[colour1] (1) -- (5);
				\draw[colour1] (3) -- (4);
				\draw[colour1] (6) -- (13);
				\draw[colour1] (7) -- (14);
				\draw[colour1] (8) -- (9);
				\draw[colour1] (10) -- (11);
				\draw[colour1] (12) -- (18);
				\draw[colour1] (15) -- (19);
				\draw[colour1] (16) -- (17);
				
				\draw[colour2] (0) -- (3);
				\draw[colour2b] (0) -- (3);
				\draw[colour2] (1) -- (6);
				\draw[colour2b] (1) -- (6);
				\draw[colour2] (2) -- (9);
				\draw[colour2b] (2) -- (9);
				\draw[colour2] (4) -- (5);
				\draw[colour2b] (4) -- (5);
				\draw[colour2] (7) -- (8);
				\draw[colour2b] (7) -- (8);
				\draw[colour2] (10) -- (17);
				\draw[colour2b] (10) -- (17);
				\draw[colour2] (11) -- (12);
				\draw[colour2b] (11) -- (12);
				\draw[colour2] (13) -- (14);
				\draw[colour2b] (13) -- (14);
				\draw[colour2] (15) -- (16);
				\draw[colour2b] (15) -- (16);
				\draw[colour2] (18) -- (19);
				\draw[colour2b] (18) -- (19);
				
				\draw[colour3] (0) -- (4);
				\draw[colour3] (1) -- (7);
				\draw[colour3] (2) -- (10);
				\draw[colour3] (3) -- (11);
				\draw[colour3] (5) -- (6);
				\draw[colour3] (8) -- (16);
				\draw[colour3] (9) -- (17);
				\draw[colour3] (12) -- (13);
				\draw[colour3] (14) -- (19);
				\draw[colour3] (15) -- (18);
				
				\draw[colour4] (0) -- (5);
				\draw[colour4b] (0) -- (5);
				\draw[colour4] (1) edge[bend right, colour4] (8);
				\draw[colour4b] (1) edge[bend right, colour4b] (8);
				\draw[colour4] (2) -- (3);
				\draw[colour4b] (2) -- (3);
				\draw[colour4] (4) -- (12);
				\draw[colour4b] (4) -- (12);
				\draw[colour4] (6) -- (14);
				\draw[colour4b] (6) -- (14);
				\draw[colour4] (7) -- (15);
				\draw[colour4b] (7) -- (15);
				\draw[colour4] (9) -- (10);
				\draw[colour4b] (9) -- (10);
				\draw[colour4] (11) -- (17);
				\draw[colour4b] (11) -- (17);
				\draw[colour4] (13) -- (19);
				\draw[colour4b] (13) -- (19);
				\draw[colour4] (16) -- (18);
				\draw[colour4b] (16) -- (18);
				
			\end{smallestquinticperfham}
			\begin{smallestquinticperfham}{3}
				\draw[colour0] (0) edge[bend right, colour0] (1);
				\draw[colour0] (2) edge[bend left, colour0] (8);
				\draw[colour0] (3) -- (10);
				\draw[colour0] (4) -- (11);
				\draw[colour0] (5) -- (13);
				\draw[colour0] (6) -- (14);
				\draw[colour0] (7) -- (15);
				\draw[colour0] (9) -- (16);
				\draw[colour0] (12) -- (19);
				\draw[colour0] (17) -- (18);
				
				\draw[colour1] (0) edge[bend left, colour1] (2);
				\draw[colour1] (1) -- (5);
				\draw[colour1] (3) -- (4);
				\draw[colour1] (6) -- (13);
				\draw[colour1] (7) -- (14);
				\draw[colour1] (8) -- (16);
				\draw[colour1] (9) -- (17);
				\draw[colour1] (10) -- (11);
				\draw[colour1] (12) -- (18);
				\draw[colour1] (15) -- (19);
				
				\draw[colour2] (0) -- (3);
				\draw[colour2b] (0) -- (3);
				\draw[colour2] (1) -- (6);
				\draw[colour2b] (1) -- (6);
				\draw[colour2] (2) -- (9);
				\draw[colour2b] (2) -- (9);
				\draw[colour2] (4) -- (5);
				\draw[colour2b] (4) -- (5);
				\draw[colour2] (7) -- (8);
				\draw[colour2b] (7) -- (8);
				\draw[colour2] (10) -- (17);
				\draw[colour2b] (10) -- (17);
				\draw[colour2] (11) -- (12);
				\draw[colour2b] (11) -- (12);
				\draw[colour2] (13) -- (19);
				\draw[colour2b] (13) -- (19);
				\draw[colour2] (14) -- (15);
				\draw[colour2b] (14) -- (15);
				\draw[colour2] (16) -- (18);
				\draw[colour2b] (16) -- (18);
				
				\draw[colour3] (0) -- (4);
				\draw[colour3] (1) -- (7);
				\draw[colour3] (2) -- (10);
				\draw[colour3] (3) -- (11);
				\draw[colour3] (5) -- (6);
				\draw[colour3] (8) -- (9);
				\draw[colour3] (12) -- (13);
				\draw[colour3] (14) -- (19);
				\draw[colour3] (15) -- (18);
				\draw[colour3] (16) -- (17);
				
				\draw[colour4] (0) -- (5);
				\draw[colour4b] (0) -- (5);
				\draw[colour4] (1) edge[bend right, colour4] (8);
				\draw[colour4b] (1) edge[bend right, colour4b] (8);
				\draw[colour4] (2) -- (3);
				\draw[colour4b] (2) -- (3);
				\draw[colour4] (4) -- (12);
				\draw[colour4b] (4) -- (12);
				\draw[colour4] (6) -- (7);
				\draw[colour4b] (6) -- (7);
				\draw[colour4] (9) -- (10);
				\draw[colour4b] (9) -- (10);
				\draw[colour4] (11) -- (17);
				\draw[colour4b] (11) -- (17);
				\draw[colour4] (13) -- (14);
				\draw[colour4b] (13) -- (14);
				\draw[colour4] (15) -- (16);
				\draw[colour4b] (15) -- (16);
				\draw[colour4] (18) -- (19);
				\draw[colour4b] (18) -- (19);
				
			\end{smallestquinticperfham}
			
			\begin{smallestquinticperfham}{4}
				\draw[colour0] (0) edge[bend right, colour0] (1);
				\draw[colour0] (2) edge[bend left, colour0] (8);
				\draw[colour0] (3) -- (10);
				\draw[colour0] (4) -- (11);
				\draw[colour0] (5) -- (13);
				\draw[colour0] (6) -- (14);
				\draw[colour0] (7) -- (15);
				\draw[colour0] (9) -- (16);
				\draw[colour0] (12) -- (19);
				\draw[colour0] (17) -- (18);
				
				\draw[colour1] (0) edge[bend left, colour1] (2);
				\draw[colour1] (1) -- (5);
				\draw[colour1] (3) -- (4);
				\draw[colour1] (6) -- (13);
				\draw[colour1] (7) -- (14);
				\draw[colour1] (8) -- (16);
				\draw[colour1] (9) -- (10);
				\draw[colour1] (11) -- (17);
				\draw[colour1] (12) -- (18);
				\draw[colour1] (15) -- (19);
				
				\draw[colour2] (0) -- (3);
				\draw[colour2b] (0) -- (3);
				\draw[colour2] (1) -- (6);
				\draw[colour2b] (1) -- (6);
				\draw[colour2] (2) -- (9);
				\draw[colour2b] (2) -- (9);
				\draw[colour2] (4) -- (5);
				\draw[colour2b] (4) -- (5);
				\draw[colour2] (7) -- (8);
				\draw[colour2b] (7) -- (8);
				\draw[colour2] (10) -- (17);
				\draw[colour2b] (10) -- (17);
				\draw[colour2] (11) -- (12);
				\draw[colour2b] (11) -- (12);
				\draw[colour2] (13) -- (19);
				\draw[colour2b] (13) -- (19);
				\draw[colour2] (14) -- (15);
				\draw[colour2b] (14) -- (15);
				\draw[colour2] (16) -- (18);
				\draw[colour2b] (16) -- (18);
				
				\draw[colour3] (0) -- (4);
				\draw[colour3] (1) -- (7);
				\draw[colour3] (2) -- (10);
				\draw[colour3] (3) -- (11);
				\draw[colour3] (5) -- (6);
				\draw[colour3] (8) -- (9);
				\draw[colour3] (12) -- (13);
				\draw[colour3] (14) -- (19);
				\draw[colour3] (15) -- (18);
				\draw[colour3] (16) -- (17);
				
				\draw[colour4] (0) -- (5);
				\draw[colour4b] (0) -- (5);
				\draw[colour4] (1) edge[bend right, colour4] (8);
				\draw[colour4b] (1) edge[bend right, colour4b] (8);
				\draw[colour4] (2) -- (3);
				\draw[colour4b] (2) -- (3);
				\draw[colour4] (4) -- (12);
				\draw[colour4b] (4) -- (12);
				\draw[colour4] (6) -- (7);
				\draw[colour4b] (6) -- (7);
				\draw[colour4] (9) -- (17);
				\draw[colour4b] (9) -- (17);
				\draw[colour4] (10) -- (11);
				\draw[colour4b] (10) -- (11);
				\draw[colour4] (13) -- (14);
				\draw[colour4b] (13) -- (14);
				\draw[colour4] (15) -- (16);
				\draw[colour4b] (15) -- (16);
				\draw[colour4] (18) -- (19);
				\draw[colour4b] (18) -- (19);
				
			\end{smallestquinticperfham}
			\begin{smallestquinticperfham}{5}
				\draw[colour0] (0) edge[bend right, colour0] (1);
				\draw[colour0] (2) edge[bend left, colour0] (8);
				\draw[colour0] (3) -- (10);
				\draw[colour0] (4) -- (11);
				\draw[colour0] (5) -- (13);
				\draw[colour0] (6) -- (14);
				\draw[colour0] (7) -- (15);
				\draw[colour0] (9) -- (17);
				\draw[colour0] (12) -- (19);
				\draw[colour0] (16) -- (18);
				
				\draw[colour1] (0) edge[bend left, colour1] (2);
				\draw[colour1] (1) -- (5);
				\draw[colour1] (3) -- (4);
				\draw[colour1] (6) -- (13);
				\draw[colour1] (7) -- (8);
				\draw[colour1] (9) -- (10);
				\draw[colour1] (11) -- (17);
				\draw[colour1] (12) -- (18);
				\draw[colour1] (14) -- (19);
				\draw[colour1] (15) -- (16);
				
				\draw[colour2] (0) -- (3);
				\draw[colour2b] (0) -- (3);
				\draw[colour2] (1) -- (6);
				\draw[colour2b] (1) -- (6);
				\draw[colour2] (2) -- (9);
				\draw[colour2b] (2) -- (9);
				\draw[colour2] (4) -- (5);
				\draw[colour2b] (4) -- (5);
				\draw[colour2] (7) -- (14);
				\draw[colour2b] (7) -- (14);
				\draw[colour2] (8) -- (16);
				\draw[colour2b] (8) -- (16);
				\draw[colour2] (10) -- (17);
				\draw[colour2b] (10) -- (17);
				\draw[colour2] (11) -- (12);
				\draw[colour2b] (11) -- (12);
				\draw[colour2] (13) -- (19);
				\draw[colour2b] (13) -- (19);
				\draw[colour2] (15) -- (18);
				\draw[colour2b] (15) -- (18);
				
				\draw[colour3] (0) -- (4);
				\draw[colour3] (1) -- (7);
				\draw[colour3] (2) -- (10);
				\draw[colour3] (3) -- (11);
				\draw[colour3] (5) -- (6);
				\draw[colour3] (8) -- (9);
				\draw[colour3] (12) -- (13);
				\draw[colour3] (14) -- (15);
				\draw[colour3] (16) -- (17);
				\draw[colour3] (18) -- (19);
				
				\draw[colour4] (0) -- (5);
				\draw[colour4b] (0) -- (5);
				\draw[colour4] (1) edge[bend right, colour4] (8);
				\draw[colour4b] (1) edge[bend right, colour4b] (8);
				\draw[colour4] (2) -- (3);
				\draw[colour4b] (2) -- (3);
				\draw[colour4] (4) -- (12);
				\draw[colour4b] (4) -- (12);
				\draw[colour4] (6) -- (7);
				\draw[colour4b] (6) -- (7);
				\draw[colour4] (9) -- (16);
				\draw[colour4b] (9) -- (16);
				\draw[colour4] (10) -- (11);
				\draw[colour4b] (10) -- (11);
				\draw[colour4] (13) -- (14);
				\draw[colour4b] (13) -- (14);
				\draw[colour4] (15) -- (19);
				\draw[colour4b] (15) -- (19);
				\draw[colour4] (17) -- (18);
				\draw[colour4b] (17) -- (18);
				
			\end{smallestquinticperfham}
			\begin{smallestquinticperfham}{6}
				\draw[colour0] (0) edge[bend right, colour0] (1);
				\draw[colour0] (2) edge[bend left, colour0] (8);
				\draw[colour0] (3) -- (10);
				\draw[colour0] (4) -- (11);
				\draw[colour0] (5) -- (6);
				\draw[colour0] (7) -- (15);
				\draw[colour0] (9) -- (17);
				\draw[colour0] (12) -- (13);
				\draw[colour0] (14) -- (19);
				\draw[colour0] (16) -- (18);
				
				\draw[colour1] (0) edge[bend left, colour1] (2);
				\draw[colour1] (1) -- (5);
				\draw[colour1] (3) -- (4);
				\draw[colour1] (6) -- (13);
				\draw[colour1] (7) -- (8);
				\draw[colour1] (9) -- (16);
				\draw[colour1] (10) -- (11);
				\draw[colour1] (12) -- (19);
				\draw[colour1] (14) -- (15);
				\draw[colour1] (17) -- (18);
				
				\draw[colour2] (0) -- (3);
				\draw[colour2b] (0) -- (3);
				\draw[colour2] (1) -- (6);
				\draw[colour2b] (1) -- (6);
				\draw[colour2] (2) -- (9);
				\draw[colour2b] (2) -- (9);
				\draw[colour2] (4) -- (5);
				\draw[colour2b] (4) -- (5);
				\draw[colour2] (7) -- (14);
				\draw[colour2b] (7) -- (14);
				\draw[colour2] (8) -- (16);
				\draw[colour2b] (8) -- (16);
				\draw[colour2] (10) -- (17);
				\draw[colour2b] (10) -- (17);
				\draw[colour2] (11) -- (12);
				\draw[colour2b] (11) -- (12);
				\draw[colour2] (13) -- (19);
				\draw[colour2b] (13) -- (19);
				\draw[colour2] (15) -- (18);
				\draw[colour2b] (15) -- (18);
				
				\draw[colour3] (0) -- (4);
				\draw[colour3] (1) -- (7);
				\draw[colour3] (2) -- (10);
				\draw[colour3] (3) -- (11);
				\draw[colour3] (5) -- (13);
				\draw[colour3] (6) -- (14);
				\draw[colour3] (8) -- (9);
				\draw[colour3] (12) -- (18);
				\draw[colour3] (15) -- (19);
				\draw[colour3] (16) -- (17);
				
				\draw[colour4] (0) -- (5);
				\draw[colour4b] (0) -- (5);
				\draw[colour4] (1) edge[bend right, colour4] (8);
				\draw[colour4b] (1) edge[bend right, colour4b] (8);
				\draw[colour4] (2) -- (3);
				\draw[colour4b] (2) -- (3);
				\draw[colour4] (4) -- (12);
				\draw[colour4b] (4) -- (12);
				\draw[colour4] (6) -- (7);
				\draw[colour4b] (6) -- (7);
				\draw[colour4] (9) -- (10);
				\draw[colour4b] (9) -- (10);
				\draw[colour4] (11) -- (17);
				\draw[colour4b] (11) -- (17);
				\draw[colour4] (13) -- (14);
				\draw[colour4b] (13) -- (14);
				\draw[colour4] (15) -- (16);
				\draw[colour4b] (15) -- (16);
				\draw[colour4] (18) -- (19);
				\draw[colour4b] (18) -- (19);
				
			\end{smallestquinticperfham}
			\begin{smallestquinticperfham}{7}
				\draw[colour0] (0) edge[bend right, colour0] (1);
				\draw[colour0] (2) edge[bend left, colour0] (8);
				\draw[colour0] (3) -- (10);
				\draw[colour0] (4) -- (12);
				\draw[colour0] (5) -- (13);
				\draw[colour0] (6) -- (14);
				\draw[colour0] (7) -- (15);
				\draw[colour0] (9) -- (16);
				\draw[colour0] (11) -- (17);
				\draw[colour0] (18) -- (19);
				
				\draw[colour1] (0) edge[bend left, colour1] (2);
				\draw[colour1] (1) -- (5);
				\draw[colour1] (3) -- (4);
				\draw[colour1] (6) -- (7);
				\draw[colour1] (8) -- (16);
				\draw[colour1] (9) -- (17);
				\draw[colour1] (10) -- (11);
				\draw[colour1] (12) -- (19);
				\draw[colour1] (13) -- (14);
				\draw[colour1] (15) -- (18);
				
				\draw[colour2] (0) -- (3);
				\draw[colour2b] (0) -- (3);
				\draw[colour2] (1) -- (6);
				\draw[colour2b] (1) -- (6);
				\draw[colour2] (2) -- (9);
				\draw[colour2b] (2) -- (9);
				\draw[colour2] (4) -- (5);
				\draw[colour2b] (4) -- (5);
				\draw[colour2] (7) -- (8);
				\draw[colour2b] (7) -- (8);
				\draw[colour2] (10) -- (17);
				\draw[colour2b] (10) -- (17);
				\draw[colour2] (11) -- (12);
				\draw[colour2b] (11) -- (12);
				\draw[colour2] (13) -- (19);
				\draw[colour2b] (13) -- (19);
				\draw[colour2] (14) -- (15);
				\draw[colour2b] (14) -- (15);
				\draw[colour2] (16) -- (18);
				\draw[colour2b] (16) -- (18);
				
				\draw[colour3] (0) -- (4);
				\draw[colour3] (1) -- (7);
				\draw[colour3] (2) -- (10);
				\draw[colour3] (3) -- (11);
				\draw[colour3] (5) -- (6);
				\draw[colour3] (8) -- (9);
				\draw[colour3] (12) -- (13);
				\draw[colour3] (14) -- (19);
				\draw[colour3] (15) -- (16);
				\draw[colour3] (17) -- (18);
				
				\draw[colour4] (0) -- (5);
				\draw[colour4b] (0) -- (5);
				\draw[colour4] (1) edge[bend right, colour4] (8);
				\draw[colour4b] (1) edge[bend right, colour4b] (8);
				\draw[colour4] (2) -- (3);
				\draw[colour4b] (2) -- (3);
				\draw[colour4] (4) -- (11);
				\draw[colour4b] (4) -- (11);
				\draw[colour4] (6) -- (13);
				\draw[colour4b] (6) -- (13);
				\draw[colour4] (7) -- (14);
				\draw[colour4b] (7) -- (14);
				\draw[colour4] (9) -- (10);
				\draw[colour4b] (9) -- (10);
				\draw[colour4] (12) -- (18);
				\draw[colour4b] (12) -- (18);
				\draw[colour4] (15) -- (19);
				\draw[colour4b] (15) -- (19);
				\draw[colour4] (16) -- (17);
				\draw[colour4b] (16) -- (17);
				
			\end{smallestquinticperfham}
			
			\begin{smallestquinticperfham}{8}
				\draw[colour0] (0) edge[bend right, colour0] (1);
				\draw[colour0] (2) edge[bend left, colour0] (8);
				\draw[colour0] (3) -- (10);
				\draw[colour0] (4) -- (11);
				\draw[colour0] (5) -- (13);
				\draw[colour0] (6) -- (7);
				\draw[colour0] (9) -- (17);
				\draw[colour0] (12) -- (18);
				\draw[colour0] (14) -- (19);
				\draw[colour0] (15) -- (16);
				
				\draw[colour1] (0) edge[bend left, colour1] (2);
				\draw[colour1] (1) -- (5);
				\draw[colour1] (3) -- (4);
				\draw[colour1] (6) -- (13);
				\draw[colour1] (7) -- (8);
				\draw[colour1] (9) -- (16);
				\draw[colour1] (10) -- (17);
				\draw[colour1] (11) -- (12);
				\draw[colour1] (14) -- (15);
				\draw[colour1] (18) -- (19);
				
				\draw[colour2] (0) -- (3);
				\draw[colour2b] (0) -- (3);
				\draw[colour2] (1) -- (6);
				\draw[colour2b] (1) -- (6);
				\draw[colour2] (2) -- (9);
				\draw[colour2b] (2) -- (9);
				\draw[colour2] (4) -- (5);
				\draw[colour2b] (4) -- (5);
				\draw[colour2] (7) -- (14);
				\draw[colour2b] (7) -- (14);
				\draw[colour2] (8) -- (16);
				\draw[colour2b] (8) -- (16);
				\draw[colour2] (10) -- (11);
				\draw[colour2b] (10) -- (11);
				\draw[colour2] (12) -- (13);
				\draw[colour2b] (12) -- (13);
				\draw[colour2] (15) -- (19);
				\draw[colour2b] (15) -- (19);
				\draw[colour2] (17) -- (18);
				\draw[colour2b] (17) -- (18);
				
				\draw[colour3] (0) -- (4);
				\draw[colour3] (1) -- (7);
				\draw[colour3] (2) -- (10);
				\draw[colour3] (3) -- (11);
				\draw[colour3] (5) -- (6);
				\draw[colour3] (8) -- (9);
				\draw[colour3] (12) -- (19);
				\draw[colour3] (13) -- (14);
				\draw[colour3] (15) -- (18);
				\draw[colour3] (16) -- (17);
				
				\draw[colour4] (0) -- (5);
				\draw[colour4b] (0) -- (5);
				\draw[colour4] (1) edge[bend right, colour4] (8);
				\draw[colour4b] (1) edge[bend right, colour4b] (8);
				\draw[colour4] (2) -- (3);
				\draw[colour4b] (2) -- (3);
				\draw[colour4] (4) -- (12);
				\draw[colour4b] (4) -- (12);
				\draw[colour4] (6) -- (14);
				\draw[colour4b] (6) -- (14);
				\draw[colour4] (7) -- (15);
				\draw[colour4b] (7) -- (15);
				\draw[colour4] (9) -- (10);
				\draw[colour4b] (9) -- (10);
				\draw[colour4] (11) -- (17);
				\draw[colour4b] (11) -- (17);
				\draw[colour4] (13) -- (19);
				\draw[colour4b] (13) -- (19);
				\draw[colour4] (16) -- (18);
				\draw[colour4b] (16) -- (18);
				
			\end{smallestquinticperfham}
			\begin{smallestquinticperfham}{9}
				\draw[colour0] (0) edge[bend right, colour0] (1);
				\draw[colour0] (2) edge[bend left, colour0] (8);
				\draw[colour0] (3) -- (10);
				\draw[colour0] (4) -- (11);
				\draw[colour0] (5) -- (13);
				\draw[colour0] (6) -- (14);
				\draw[colour0] (7) -- (15);
				\draw[colour0] (9) -- (17);
				\draw[colour0] (12) -- (19);
				\draw[colour0] (16) -- (18);
				
				\draw[colour1] (0) edge[bend left, colour1] (2);
				\draw[colour1] (1) -- (5);
				\draw[colour1] (3) -- (4);
				\draw[colour1] (6) -- (13);
				\draw[colour1] (7) -- (8);
				\draw[colour1] (9) -- (16);
				\draw[colour1] (10) -- (17);
				\draw[colour1] (11) -- (12);
				\draw[colour1] (14) -- (19);
				\draw[colour1] (15) -- (18);
				
				\draw[colour2] (0) -- (3);
				\draw[colour2b] (0) -- (3);
				\draw[colour2] (1) -- (6);
				\draw[colour2b] (1) -- (6);
				\draw[colour2] (2) -- (9);
				\draw[colour2b] (2) -- (9);
				\draw[colour2] (4) -- (5);
				\draw[colour2b] (4) -- (5);
				\draw[colour2] (7) -- (14);
				\draw[colour2b] (7) -- (14);
				\draw[colour2] (8) -- (16);
				\draw[colour2b] (8) -- (16);
				\draw[colour2] (10) -- (11);
				\draw[colour2b] (10) -- (11);
				\draw[colour2] (12) -- (13);
				\draw[colour2b] (12) -- (13);
				\draw[colour2] (15) -- (19);
				\draw[colour2b] (15) -- (19);
				\draw[colour2] (17) -- (18);
				\draw[colour2b] (17) -- (18);
				
				\draw[colour3] (0) -- (4);
				\draw[colour3] (1) -- (7);
				\draw[colour3] (2) -- (10);
				\draw[colour3] (3) -- (11);
				\draw[colour3] (5) -- (6);
				\draw[colour3] (8) -- (9);
				\draw[colour3] (12) -- (18);
				\draw[colour3] (13) -- (19);
				\draw[colour3] (14) -- (15);
				\draw[colour3] (16) -- (17);
				
				\draw[colour4] (0) -- (5);
				\draw[colour4b] (0) -- (5);
				\draw[colour4] (1) edge[bend right, colour4] (8);
				\draw[colour4b] (1) edge[bend right, colour4b] (8);
				\draw[colour4] (2) -- (3);
				\draw[colour4b] (2) -- (3);
				\draw[colour4] (4) -- (12);
				\draw[colour4b] (4) -- (12);
				\draw[colour4] (6) -- (7);
				\draw[colour4b] (6) -- (7);
				\draw[colour4] (9) -- (10);
				\draw[colour4b] (9) -- (10);
				\draw[colour4] (11) -- (17);
				\draw[colour4b] (11) -- (17);
				\draw[colour4] (13) -- (14);
				\draw[colour4b] (13) -- (14);
				\draw[colour4] (15) -- (16);
				\draw[colour4b] (15) -- (16);
				\draw[colour4] (18) -- (19);
				\draw[colour4b] (18) -- (19);
				
			\end{smallestquinticperfham}
		\end{center}
		\caption{A planar quintic graph on 20 vertices with a $1$-factorisation containing exactly $k$ perfect pairs for every $k \in \{ 0, \ldots, 9 \}$.}\label{fig:20}
	\end{figure}
	
	\begin{figure}
		\begin{center}
			\begin{tikzpicture}[very thick,scale=1.2,no text]
				
				\node[fill=black, inner sep=2pt] (7) at (3.1,0) {8};
				\node (8) at (0,3.1) {9};
				\node (15) at (0,-3.1) {16};
				\node[fill=black, inner sep=2pt] (16) at (-3.1,0) {17};
				
				\node[fill=black, inner sep=2pt] (6) at (1.168,0) {7};
				\node[fill=black, inner sep=2pt] (12) at (-1.182,0) {13};
				
				\node (0) at (0.845,1.134) {1};
				\node (3) at (0.197,2.107) {4};
				\node (4) at (-0.072,1.344) {5};
				\node (5) at (0.569,0.612) {6};
				\node (11) at (-0.187,0.369) {12};
				\node (13) at (0.178,-0.372) {14};
				\node (19) at (-0.570,-0.615) {20};
				\node (22) at (-0.194,-2.110) {23};
				\node (24) at (-0.840,-1.138) {25};
				\node (25) at (0.078,-1.346) {26};
				
				\node (2) at ($.5*(7)+.5*(8)$) {3};
				\node (23) at ($.5*(15)+.5*(16)$) {24};
				\node (14) at ($.6666*(7)+.3333*(15)$) {15};
				\node (21) at ($.3333*(7)+.6666*(15)$) {22};
				\node  (9) at ($.6666*(8)+.3333*(16)$) {10};
				\node (17) at ($.3333*(8)+.6666*(16)$) {18};

				\node (1) at ($.2*(0)+.2*(2)+.2*(5)+.2*(6)+.2*(7)$) {2};
				\node (10) at ($.2*(4)+.2*(5)+.2*(9)+.2*(11)+.2*(17)$) {11};
				\node (18) at ($.2*(12)+.2*(16)+.2*(19)+.2*(23)+.2*(24)$) {19};
				\node (20) at ($.2*(13)+.2*(14)+.2*(19)+.2*(21)+.2*(25)$) {21};
				
				\draw[colour0] (0) -- (1);
				\draw[colour0] (2) -- (8);
				\draw[colour0] (3) -- (9);
				\draw[colour0] (4) -- (5);
				\draw[colour0] (6) -- (14);
				\draw[colour0] (7) edge[bend left, colour0] (15);
				\draw[colour0] (10) -- (17);
				\draw[colour0] (11) -- (12);
				\draw[colour0] (13) -- (20);
				\draw[colour0] (16) -- (18);
				\draw[colour0] (19) -- (25);
				\draw[colour0] (21) -- (22);
				\draw[colour0] (23) -- (24);
				
				\draw[colour1] (0) -- (5);
				\draw[colour1] (1) -- (2);
				\draw[colour1] (3) -- (4);
				\draw[colour1] (6) -- (12);
				\draw[colour1] (7) edge[bend right, colour1] (8);
				\draw[colour1] (9) -- (17);
				\draw[colour1] (10) -- (11);
				\draw[colour1] (13) -- (19);
				\draw[colour1] (14) -- (20);
				\draw[colour1] (15) edge[bend left, colour1] (16);
				\draw[colour1] (18) -- (24);
				\draw[colour1] (21) -- (25);
				\draw[colour1] (22) -- (23);
				
				\draw[colour2] (0) -- (4);
				\draw[colour2b] (0) -- (4);
				\draw[colour2] (1) -- (7);
				\draw[colour2b] (1) -- (7);
				\draw[colour2] (2) -- (3);
				\draw[colour2b] (2) -- (3);
				\draw[colour2] (5) -- (11);
				\draw[colour2b] (5) -- (11);
				\draw[colour2] (6) -- (13);
				\draw[colour2b] (6) -- (13);
				\draw[colour2] (8) edge[bend right, colour2] (16);
				\draw[colour2b] (8) edge[bend right, colour2b] (16);
				\draw[colour2] (9) -- (10);
				\draw[colour2b] (9) -- (10);
				\draw[colour2] (12) -- (17);
				\draw[colour2b] (12) -- (17);
				\draw[colour2] (14) -- (21);
				\draw[colour2b] (14) -- (21);
				\draw[colour2] (15) -- (22);
				\draw[colour2b] (15) -- (22);
				\draw[colour2] (18) -- (23);
				\draw[colour2b] (18) -- (23);
				\draw[colour2] (19) -- (20);
				\draw[colour2b] (19) -- (20);
				\draw[colour2] (24) -- (25);
				\draw[colour2b] (24) -- (25);
				
				\draw[colour3] (0) -- (3);
				\draw[colour3] (1) -- (5);
				\draw[colour3] (2) -- (7);
				\draw[colour3] (4) -- (10);
				\draw[colour3] (6) -- (11);
				\draw[colour3] (8) -- (9);
				\draw[colour3] (12) -- (18);
				\draw[colour3] (13) -- (14);
				\draw[colour3] (15) -- (23);
				\draw[colour3] (16) -- (17);
				\draw[colour3] (19) -- (24);
				\draw[colour3] (20) -- (21);
				\draw[colour3] (22) -- (25);
				
				\draw[colour4] (0) -- (2);
				\draw[colour4b] (0) -- (2);
				\draw[colour4] (1) -- (6);
				\draw[colour4b] (1) -- (6);
				\draw[colour4] (3) -- (8);
				\draw[colour4b] (3) -- (8);
				\draw[colour4] (4) -- (9);
				\draw[colour4b] (4) -- (9);
				\draw[colour4] (5) -- (10);
				\draw[colour4b] (5) -- (10);
				\draw[colour4] (7) -- (14);
				\draw[colour4b] (7) -- (14);
				\draw[colour4] (11) -- (17);
				\draw[colour4b] (11) -- (17);
				\draw[colour4] (12) -- (13);
				\draw[colour4b] (12) -- (13);
				\draw[colour4] (15) -- (21);
				\draw[colour4b] (15) -- (21);
				\draw[colour4] (16) -- (23);
				\draw[colour4b] (16) -- (23);
				\draw[colour4] (18) -- (19);
				\draw[colour4b] (18) -- (19);
				\draw[colour4] (20) -- (25);
				\draw[colour4b] (20) -- (25);
				\draw[colour4] (22) -- (24);
				\draw[colour4b] (22) -- (24);
			\end{tikzpicture}
		\end{center}
		\caption{A planar quintic perfectly hamiltonian graph of connectivity~$4$. The 4-vertex-cut (shown slightly larger and coloured black) can be found along the equator of the figure.}\label{fig:quintic_ph_4conn}
	\end{figure}
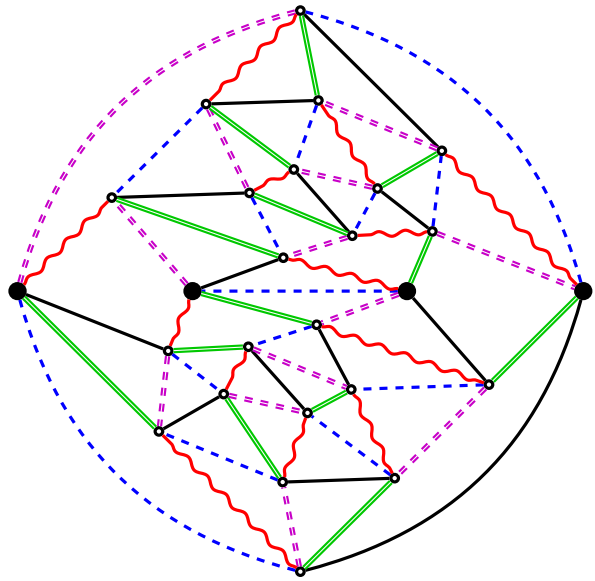
	
	\begin{figure}
		\begin{center}
			\newenvironment{icosahedron}[1]{
				\begin{tikzpicture}[very thick,scale=.9]
					
					\node[rectangle,draw=none,font=\normalsize] at (270:2) {#1 perfect pairs};
					
					\begin{scope}[no text]
						\node (0) at (90:2) {1};
						\node (1) at (210:2) {2};
						\node (2) at (330:2) {3};
						\node (3) at (30:1) {4};
						\node (4) at (90:1) {5};
						\node (5) at (150:1) {6};
						\node (6) at (210:1) {7};
						\node (7) at (270:1) {8};
						\node (8) at (330:1) {9};
						\node (9) at (30:.5) {10};
						\node (10) at (150:.5) {11};
						\node (11) at (270:.5) {12};
					\end{scope}
				}{
			\end{tikzpicture}}
			
			\begin{icosahedron}{0}
				\draw[colour0] (0) edge[bend right, colour0] (1);
				\draw[colour0] (2) -- (8);
				\draw[colour0] (3) -- (4);
				\draw[colour0] (5) -- (10);
				\draw[colour0] (6) -- (7);
				\draw[colour0] (9) -- (11);
				
				\draw[colour1] (0) edge[bend left, colour1] (2);
				\draw[colour1] (1) -- (6);
				\draw[colour1] (3) -- (9);
				\draw[colour1] (4) -- (5);
				\draw[colour1] (7) -- (8);
				\draw[colour1] (10) -- (11);
				
				\draw[colour2] (0) -- (3);
				\draw[colour2b] (0) -- (3);
				\draw[colour2] (1) -- (5);
				\draw[colour2b] (1) -- (5);
				\draw[colour2] (2) -- (7);
				\draw[colour2b] (2) -- (7);
				\draw[colour2] (4) -- (10);
				\draw[colour2b] (4) -- (10);
				\draw[colour2] (6) -- (11);
				\draw[colour2b] (6) -- (11);
				\draw[colour2] (8) -- (9);
				\draw[colour2b] (8) -- (9);
				
				\draw[colour3] (0) -- (4);
				\draw (1) edge[bend right, colour3]  (2);
				\draw[colour3] (3) -- (8);
				\draw[colour3] (5) -- (6);
				\draw[colour3] (7) -- (11);
				\draw[colour3] (9) -- (10);
				
				\draw[colour4] (0) -- (5);
				\draw[colour4b] (0) -- (5);
				\draw[colour4] (1) -- (7);
				\draw[colour4b] (1) -- (7);
				\draw[colour4] (2) -- (3);
				\draw[colour4b] (2) -- (3);
				\draw[colour4] (4) -- (9);
				\draw[colour4b] (4) -- (9);
				\draw[colour4] (6) -- (10);
				\draw[colour4b] (6) -- (10);
				\draw[colour4] (8) -- (11);
				\draw[colour4b] (8) -- (11);
			\end{icosahedron}\hfill
			\begin{icosahedron}{2}
				\draw[colour0] (0) edge[bend right, colour0] (1);
				\draw[colour0] (2) -- (7);
				\draw[colour0] (3) -- (8);
				\draw[colour0] (4) -- (5);
				\draw[colour0] (6) -- (10);
				\draw[colour0] (9) -- (11);
				
				\draw[colour1] (0) edge[bend left, colour1] (2);
				\draw[colour1] (1) -- (5);
				\draw[colour1] (3) -- (9);
				\draw[colour1] (4) -- (10);
				\draw[colour1] (6) -- (7);
				\draw[colour1] (8) -- (11);
				
				\draw[colour2] (0) -- (3);
				\draw[colour2b] (0) -- (3);
				\draw[colour2] (1) -- (7);
				\draw[colour2b] (1) -- (7);
				\draw[colour2] (2) -- (8);
				\draw[colour2b] (2) -- (8);
				\draw[colour2] (4) -- (9);
				\draw[colour2b] (4) -- (9);
				\draw[colour2] (5) -- (6);
				\draw[colour2b] (5) -- (6);
				\draw[colour2] (10) -- (11);
				\draw[colour2b] (10) -- (11);
				
				\draw[colour3] (0) -- (4);
				\draw[colour3] (1) -- (6);
				\draw[colour3] (2) -- (3);
				\draw[colour3] (5) -- (10);
				\draw[colour3] (7) -- (11);
				\draw[colour3] (8) -- (9);
				
				\draw[colour4] (0) -- (5);
				\draw[colour4b] (0) -- (5);
				\draw[colour4] (1) edge[bend right, colour4] (2);
				\draw[colour4b] (1) edge[bend right, colour4b] (2);
				\draw[colour4] (3) -- (4);
				\draw[colour4b] (3) -- (4);
				\draw[colour4] (6) -- (11);
				\draw[colour4b] (6) -- (11);
				\draw[colour4] (7) -- (8);
				\draw[colour4b] (7) -- (8);
				\draw[colour4] (9) -- (10);
				\draw[colour4b] (9) -- (10);
			\end{icosahedron}\hfill
			\begin{icosahedron}{3}
				\draw[colour0] (0) edge[bend right, colour0] (1);
				\draw[colour0] (2) -- (7);
				\draw[colour0] (3) -- (8);
				\draw[colour0] (4) -- (9);
				\draw[colour0] (5) -- (6);
				\draw[colour0] (10) -- (11);
				
				\draw[colour1] (0) edge[bend left, colour1] (2);
				\draw[colour1] (1) -- (5);
				\draw[colour1] (3) -- (4);
				\draw[colour1] (6) -- (10);
				\draw[colour1] (7) -- (8);
				\draw[colour1] (9) -- (11);
				
				\draw[colour2] (0) -- (3);
				\draw[colour2b] (0) -- (3);
				\draw[colour2] (1) -- (6);
				\draw[colour2b] (1) -- (6);
				\draw[colour2] (2) -- (8);
				\draw[colour2b] (2) -- (8);
				\draw[colour2] (4) -- (5);
				\draw[colour2b] (4) -- (5);
				\draw[colour2] (7) -- (11);
				\draw[colour2b] (7) -- (11);
				\draw[colour2] (9) -- (10);
				\draw[colour2b] (9) -- (10);
				
				\draw[colour3] (0) -- (4);
				\draw[colour3] (1) -- (7);
				\draw[colour3] (2) -- (3);
				\draw[colour3] (5) -- (10);
				\draw[colour3] (6) -- (11);
				\draw[colour3] (8) -- (9);
				
				\draw[colour4] (0) -- (5);
				\draw[colour4b] (0) -- (5);
				\draw[colour4] (1) edge[bend right, colour4] (2);
				\draw[colour4b] (1) edge[bend right, colour4b] (2);
				\draw[colour4] (3) -- (9);
				\draw[colour4b] (3) -- (9);
				\draw[colour4] (4) -- (10);
				\draw[colour4b] (4) -- (10);
				\draw[colour4] (6) -- (7);
				\draw[colour4b] (6) -- (7);
				\draw[colour4] (8) -- (11);
				\draw[colour4b] (8) -- (11);
			\end{icosahedron}\hfill
			\begin{icosahedron}{4}
				\draw[colour0] (0) edge[bend right, colour0] (1);
				\draw[colour0] (2) -- (7);
				\draw[colour0] (3) -- (8);
				\draw[colour0] (4) -- (5);
				\draw[colour0] (6) -- (10);
				\draw[colour0] (9) -- (11);
				
				\draw[colour1] (0) edge[bend left, colour1] (2);
				\draw[colour1] (1) -- (5);
				\draw[colour1] (3) -- (9);
				\draw[colour1] (4) -- (10);
				\draw[colour1] (6) -- (7);
				\draw[colour1] (8) -- (11);
				
				\draw[colour2] (0) -- (3);
				\draw[colour2b] (0) -- (3);
				\draw[colour2] (1) -- (6);
				\draw[colour2b] (1) -- (6);
				\draw[colour2] (2) -- (8);
				\draw[colour2b] (2) -- (8);
				\draw[colour2] (4) -- (9);
				\draw[colour2b] (4) -- (9);
				\draw[colour2] (5) -- (10);
				\draw[colour2b] (5) -- (10);
				\draw[colour2] (7) -- (11);
				\draw[colour2b] (7) -- (11);
				
				\draw[colour3] (0) -- (4);
				\draw[colour3] (1) -- (7);
				\draw[colour3] (2) -- (3);
				\draw[colour3] (5) -- (6);
				\draw[colour3] (8) -- (9);
				\draw[colour3] (10) -- (11);
				
				\draw[colour4] (0) -- (5);
				\draw[colour4b] (0) -- (5);
				\draw[colour4] (1) edge[bend right, colour4] (2);
				\draw[colour4b] (1) edge[bend right, colour4b] (2);
				\draw[colour4] (3) -- (4);
				\draw[colour4b] (3) -- (4);
				\draw[colour4] (6) -- (11);
				\draw[colour4b] (6) -- (11);
				\draw[colour4] (7) -- (8);
				\draw[colour4b] (7) -- (8);
				\draw[colour4] (9) -- (10);
				\draw[colour4b] (9) -- (10);
			\end{icosahedron}
			\medskip
			
			\begin{icosahedron}{5}
				\draw[colour0] (0) edge[bend right, colour0] (1);
				\draw[colour0] (2) -- (7);
				\draw[colour0] (3) -- (8);
				\draw[colour0] (4) -- (9);
				\draw[colour0] (5) -- (10);
				\draw[colour0] (6) -- (11);
				
				\draw[colour1] (0) edge[bend left, colour1] (2);
				\draw[colour1] (1) -- (5);
				\draw[colour1] (3) -- (9);
				\draw[colour1] (4) -- (10);
				\draw[colour1] (6) -- (7);
				\draw[colour1] (8) -- (11);
				
				\draw[colour2] (0) -- (3);
				\draw[colour2b] (0) -- (3);
				\draw[colour2] (1) -- (6);
				\draw[colour2b] (1) -- (6);
				\draw[colour2] (2) -- (8);
				\draw[colour2b] (2) -- (8);
				\draw[colour2] (4) -- (5);
				\draw[colour2b] (4) -- (5);
				\draw[colour2] (7) -- (11);
				\draw[colour2b] (7) -- (11);
				\draw[colour2] (9) -- (10);
				\draw[colour2b] (9) -- (10);
				
				\draw[colour3] (0) -- (4);
				\draw[colour3] (1) -- (7);
				\draw[colour3] (2) -- (3);
				\draw[colour3] (5) -- (6);
				\draw[colour3] (8) -- (9);
				\draw[colour3] (10) -- (11);
				
				\draw[colour4] (0) -- (5);
				\draw[colour4b] (0) -- (5);
				\draw[colour4] (1) edge[bend right, colour4] (2);
				\draw[colour4b] (1) edge[bend right, colour4b] (2);
				\draw[colour4] (3) -- (4);
				\draw[colour4b] (3) -- (4);
				\draw[colour4] (6) -- (10);
				\draw[colour4b] (6) -- (10);
				\draw[colour4] (7) -- (8);
				\draw[colour4b] (7) -- (8);
				\draw[colour4] (9) -- (11);
				\draw[colour4b] (9) -- (11);
			\end{icosahedron}\hfill
			\begin{icosahedron}{6}
				\draw[colour0] (0) edge[bend right, colour0] (1);
				\draw[colour0] (2) -- (7);
				\draw[colour0] (3) -- (8);
				\draw[colour0] (4) -- (5);
				\draw[colour0] (6) -- (10);
				\draw[colour0] (9) -- (11);
				
				\draw[colour1] (0) edge[bend left, colour1] (2);
				\draw[colour1] (1) -- (5);
				\draw[colour1] (3) -- (9);
				\draw[colour1] (4) -- (10);
				\draw[colour1] (6) -- (11);
				\draw[colour1] (7) -- (8);
				
				\draw[colour2] (0) -- (3);
				\draw[colour2b] (0) -- (3);
				\draw[colour2] (1) -- (6);
				\draw[colour2b] (1) -- (6);
				\draw[colour2] (2) -- (8);
				\draw[colour2b] (2) -- (8);
				\draw[colour2] (4) -- (9);
				\draw[colour2b] (4) -- (9);
				\draw[colour2] (5) -- (10);
				\draw[colour2b] (5) -- (10);
				\draw[colour2] (7) -- (11);
				\draw[colour2b] (7) -- (11);
				
				\draw[colour3] (0) -- (4);
				\draw[colour3] (1) -- (7);
				\draw[colour3] (2) -- (3);
				\draw[colour3] (5) -- (6);
				\draw[colour3] (8) -- (9);
				\draw[colour3] (10) -- (11);
				
				\draw[colour4] (0) -- (5);
				\draw[colour4b] (0) -- (5);
				\draw[colour4] (1) edge[bend right, colour4] (2);
				\draw[colour4b] (1) edge[bend right, colour4b] (2);
				\draw[colour4] (3) -- (4);
				\draw[colour4b] (3) -- (4);
				\draw[colour4] (6) -- (7);
				\draw[colour4b] (6) -- (7);
				\draw[colour4] (8) -- (11);
				\draw[colour4b] (8) -- (11);
				\draw[colour4] (9) -- (10);
				\draw[colour4b] (9) -- (10);
			\end{icosahedron}\hfill
			\begin{icosahedron}{7}
				\draw[colour0] (0) edge[bend right, colour0] (1);
				\draw[colour0] (2) -- (7);
				\draw[colour0] (3) -- (8);
				\draw[colour0] (4) -- (5);
				\draw[colour0] (6) -- (10);
				\draw[colour0] (9) -- (11);
				
				\draw[colour1] (0) edge[bend left, colour1] (2);
				\draw[colour1] (1) -- (5);
				\draw[colour1] (3) -- (9);
				\draw[colour1] (4) -- (10);
				\draw[colour1] (6) -- (11);
				\draw[colour1] (7) -- (8);
				
				\draw[colour2] (0) -- (3);
				\draw[colour2b] (0) -- (3);
				\draw[colour2] (1) -- (6);
				\draw[colour2b] (1) -- (6);
				\draw[colour2] (2) -- (8);
				\draw[colour2b] (2) -- (8);
				\draw[colour2] (4) -- (9);
				\draw[colour2b] (4) -- (9);
				\draw[colour2] (5) -- (10);
				\draw[colour2b] (5) -- (10);
				\draw[colour2] (7) -- (11);
				\draw[colour2b] (7) -- (11);
				
				\draw[colour3] (0) -- (4);
				\draw[colour3] (1) -- (7);
				\draw[colour3] (2) -- (3);
				\draw[colour3] (5) -- (6);
				\draw[colour3] (8) -- (11);
				\draw[colour3] (9) -- (10);
				
				\draw[colour4] (0) -- (5);
				\draw[colour4b] (0) -- (5);
				\draw[colour4] (1) edge[bend right, colour4] (2);
				\draw[colour4b] (1) edge[bend right, colour4b] (2);
				\draw[colour4] (3) -- (4);
				\draw[colour4b] (3) -- (4);
				\draw[colour4] (6) -- (7);
				\draw[colour4b] (6) -- (7);
				\draw[colour4] (8) -- (9);
				\draw[colour4b] (8) -- (9);
				\draw[colour4] (10) -- (11);
				\draw[colour4b] (10) -- (11);
			\end{icosahedron}\hfill
			\begin{icosahedron}{8}
				\draw[colour0] (0) edge[bend right, colour0] (1);
				\draw[colour0] (2) -- (8);
				\draw[colour0] (3) -- (4);
				\draw[colour0] (5) -- (10);
				\draw[colour0] (6) -- (7);
				\draw[colour0] (9) -- (11);
				
				\draw[colour1] (0) edge[bend left, colour1] (2);
				\draw[colour1] (1) -- (5);
				\draw[colour1] (3) -- (8);
				\draw[colour1] (4) -- (9);
				\draw[colour1] (6) -- (10);
				\draw[colour1] (7) -- (11);
				
				\draw[colour2] (0) -- (3);
				\draw[colour2b] (0) -- (3);
				\draw[colour2] (1) -- (6);
				\draw[colour2b] (1) -- (6);
				\draw[colour2] (2) -- (7);
				\draw[colour2b] (2) -- (7);
				\draw[colour2] (4) -- (5);
				\draw[colour2b] (4) -- (5);
				\draw[colour2] (8) -- (9);
				\draw[colour2b] (8) -- (9);
				\draw[colour2] (10) -- (11);
				\draw[colour2b] (10) -- (11);
				
				\draw[colour3] (0) -- (4);
				\draw[colour3] (1) -- (7);
				\draw[colour3] (2) -- (3);
				\draw[colour3] (5) -- (6);
				\draw[colour3] (8) -- (11);
				\draw[colour3] (9) -- (10);
				
				\draw[colour4] (0) -- (5);
				\draw[colour4b] (0) -- (5);
				\draw[colour4] (1) edge[bend right, colour4] (2);
				\draw[colour4b] (1) edge[bend right, colour4b] (2);
				\draw[colour4] (3) -- (9);
				\draw[colour4b] (3) -- (9);
				\draw[colour4] (4) -- (10);
				\draw[colour4b] (4) -- (10);
				\draw[colour4] (6) -- (11);
				\draw[colour4b] (6) -- (11);
				\draw[colour4] (7) -- (8);
				\draw[colour4b] (7) -- (8);
			\end{icosahedron}
		\end{center}
		\caption{$1$-factorisations of the icosahedron with exactly $k$ perfect pairs for every $k \in \{ 0, 2, \ldots, 8 \}$. The icosahedron has no $1$-factorisations with 1, 9 or 10 perfect pairs.}\label{fig:icosahedron}
	\end{figure}

	\begin{figure}
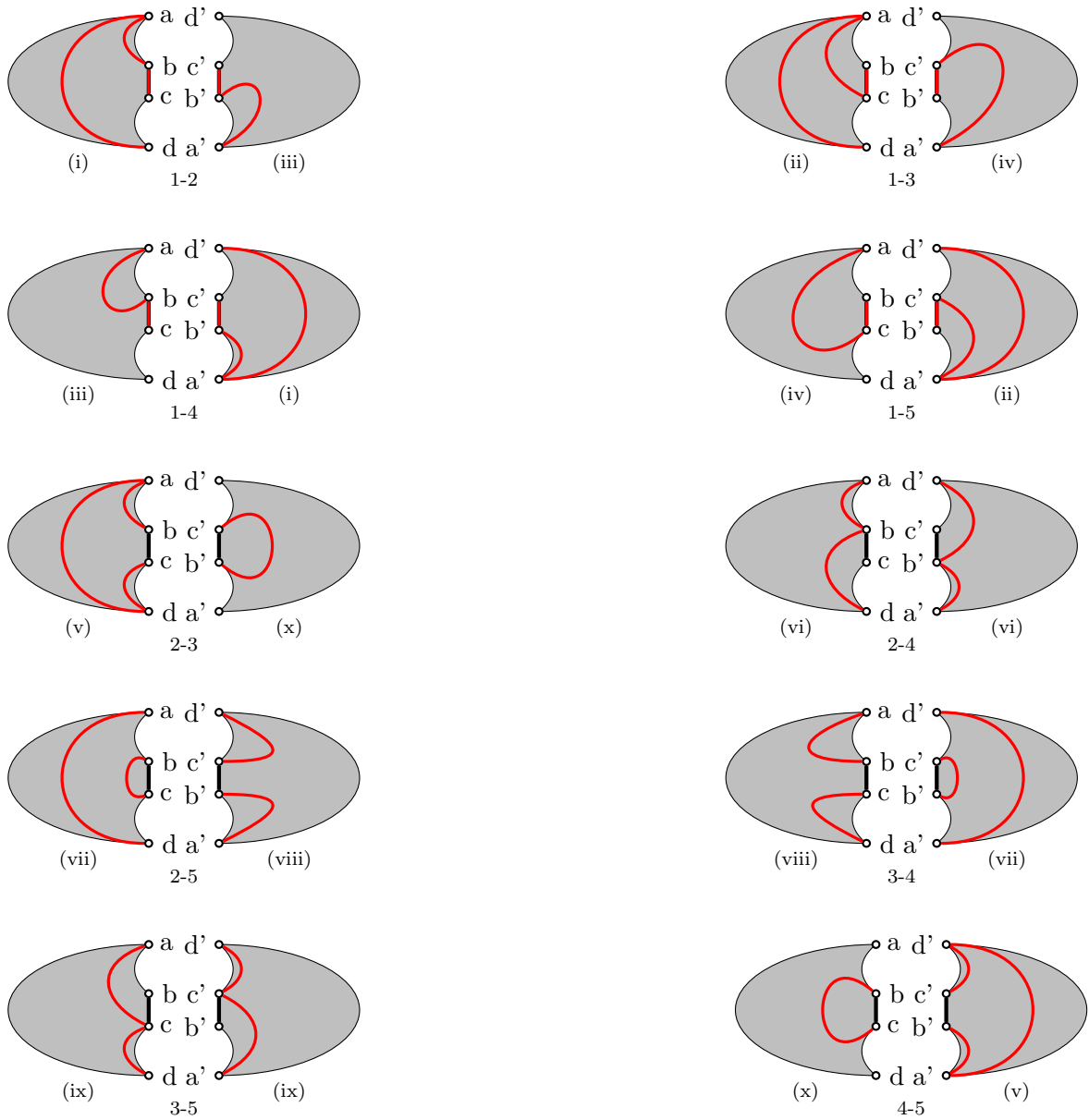

		\begin{center}

		}
		
		\begin{divorcedparts}{1-2}{(i)}{(iii)}
			\draw[very thick, red] (c) -- (b) (b) to[out=135,in=215] (a) (d) to[out=180,in=180] (a);
			\draw[very thick, red] (c') -- (b') (b') to[out=60,in=45,looseness=2] (a');
		\end{divorcedparts}
		\hfill
		\begin{divorcedparts}{1-3}{(ii)}{(iv)}
			\draw[very thick, red] (c) -- (b) (c) to[out=135,in=215] (a) (d) to[out=180,in=180] (a);
			\draw[very thick, red] (c') -- (b') (c') to[out=60,in=45,looseness=2] (a');
		\end{divorcedparts}
		\bigskip
		
		\begin{divorcedparts}{1-4}{(iii)}{(i)}
			\draw[very thick, red] (c) -- (b) (b) to[out=240,in=215,looseness=2] (a);
			\draw[very thick, red] (c') -- (b') (b') to[out=-45,in=45] (a') (d') to[out=0,in=0] (a');
		\end{divorcedparts}
		\hfill
		\begin{divorcedparts}{1-5}{(iv)}{(ii)}
			\draw[very thick, red] (c) -- (b) (c) to[out=240,in=215,looseness=2] (a);
			\draw[very thick, red] (c') -- (b') (c') to[out=-45,in=45] (a') (d') to[out=0,in=0] (a');
		\end{divorcedparts}
		\bigskip
		
		\begin{divorcedparts}{2-3}{(v)}{(x)}
			\draw[very thick, red] (b) to[out=135,in=215] (a) (d) to[out=135,in=215] (c) (d) to[out=180,in=180] (a);
			\draw[very thick, red] (c') to[out=60,in=-60,looseness=4] (b');
		\end{divorcedparts}
		\hfill
		\begin{divorcedparts}{2-4}{(vi)}{(vi)}
			\draw[very thick, red] (b) to[out=135,in=215] (a) (d) to[out=135,in=215] (b);
			\draw[very thick, red] (b') to[out=-45,in=45] (a') (d') to[out=-45,in=45] (b');
		\end{divorcedparts}
		\bigskip
		
		\begin{divorcedparts}{2-5}{(vii)}{(viii)}
			\draw[very thick, red] (b) to[out=135,in=215] (c) (d) to[out=180,in=180] (a);
			\draw[very thick, red] (b') to[out=0,in=45,looseness=2] (a') (d') to[out=-45,in=0,looseness=2] (c');
		\end{divorcedparts}
		\hfill
		\begin{divorcedparts}{3-4}{(viii)}{(vii)}
			\draw[very thick, red] (b) to[out=180,in=215,looseness=2] (a) (d) to[out=135,in=180,looseness=2] (c);
			\draw[very thick, red] (c') to[out=45,in=-45] (b') (d') to[out=0,in=0] (a');
		\end{divorcedparts}
		\bigskip
		
		\begin{divorcedparts}{3-5}{(ix)}{(ix)}
			\draw[very thick, red] (c) to[out=135,in=215] (a) (d) to[out=135,in=215] (c);
			\draw[very thick, red] (c') to[out=-45,in=45] (a') (d') to[out=-45,in=45] (c');
		\end{divorcedparts}
		\hfill
		\begin{divorcedparts}{4-5}{(x)}{(v)}
			\draw[very thick, red] (b') to[out=-45,in=45] (a') (d') to[out=-45,in=45] (c') (d') to[out=0,in=0] (a');
			\draw[very thick, red] (c) to[out=240,in=120,looseness=4] (b);
		\end{divorcedparts}
		
		\caption{The ten cases from the proof of Theorem~\ref{thm:divorce}. Each path or pair of paths always contains all vertices of the fragment to which it belongs except for vertices that are explicitly not drawn on the path. Below each fragment we show on which condition of ``suitable'' we rely for the existence of the corresponding path or pair of paths.}\label{fig:divorce_proof}
	\end{figure}

\end{document}